\definecolor{thistle}{rgb}{0.95,0.9,1}
 \definecolor{darkgreen}{rgb}{0,0.5,0}
\providecommand{\U}[1]{\protect\rule{.1in}{.1in}}
\newtheorem{theorem}{Theorem}[section] 
\newtheorem{definition}[theorem]{Definition}
\newtheorem{proposition}[theorem]{Proposition} 
\newtheorem{lemma}[theorem]{Lemma}
\newtheorem{example}[theorem]{Example}
\newtheorem{corollary}[theorem]{Corollary} 
\newtheorem{remark}[theorem]{Remark}
\newtheorem{construction}[theorem]{Construction}
\newtheorem{notation}[theorem]{Notation}
\newtheorem{maintheorem}{Theorem}
\def\C{{\mathcal C}}
\def\D{{\mathcal D}}
\def\PP{{\mathcal P}}
\def\B{{\mathcal B}}
\def\PG{{\rm PG}}
\def\GammaL{\Gamma{\rm L}}
\def\PGammaL{{\rm P}\Gamma{\rm L}}
\def\PIT{\mathcal{PIT}}
\def\Q{{\mathcal Q}}
\def\SS{{\mathcal S}}
\def\ff{\overline{G}}
\def\GL{{\rm GL}}
\def\la{\langle}
\def\ra{\rangle}
\def\SL{{\rm SL}}
\def\PSL{{\rm PSL}}
\def\PGL{{\rm PGL}}
\def\PGaL{{\rm P}\Gamma {\rm L}}
\def\PSigmaL{{\rm P}\Sigma {\rm L}}
\def\PSU{{\rm PSU}}
\def\SU{{\rm SU}}
\def\PGU{{\rm PGU}}
\def\GU{{\rm GU}}
\def\GammaU{\Gamma{\rm U}}
\def\PGaU{{\rm P}\Gamma {\rm U}}
\def\Sp{{\rm Sp}}
\def\GL{{\rm GL}}
\def\Om{\Omega}
\def\Si{\Sigma}
\def\Sym{{\rm Sym}}
\def\Soc{{\rm Soc}}
\def\Aut{{\rm Aut}}
\def\AGL{{\rm AGL}}
\def\al{\alpha}
\def\la{\langle}
\def\ra{\rangle}
\begin{document}

\title[Rank 3 innately transitive groups]{Rank three innately transitive permutation groups and  related $2$-transitive groups}

\thanks{The authors thank John Bamberg and David Raithel for extensive discussions on preliminary work for the project. They also thank an anonymous referee for useful feedback. This work forms part of the ARC Discovery Grant projects  
DP130100106 and DP200100080.  The first author was supported by Marsden grant 9144-3721004 (held by An - Conder - O'Brien) during the work on this paper. The third author would like to thank the Isaac Newton Institute for Mathematical Sciences, Cambridge, for support and hospitality during the programme {`Groups, representations and applications: new perspectives'},  where work on this paper was undertaken. This work was supported by EPSRC grant no EP/R014604/1.\ \ 
\textbf{2010 Mathematics Subject Classification:} 20B10, 05B30, 51E30.\ \ 
\textbf{Key words:}Rank $3$, permutation group, partial linear space, innately transitive}
\date{today}

\author[A. A. Baykalov, A. Devillers, C. E. Praeger]{Anton A. Baykalov, Alice Devillers, and Cheryl E. Praeger}
\address{Centre for the Mathematics of Symmetry and Computation\\
School of Physics, Mathematics and Computing\\
The University of Western Australia\\
Crawley, WA 6009, Australia} 
\email{a.a.baykalov@gmail.com, alice.devillers@uwa.edu.au, cheryl.praeger@uwa.edu.au}

 \begin{abstract}
The sets of primitive, quasiprimitive, and innately transitive permutation groups may each be regarded as the building blocks of finite transitive permutation groups, and are analogues of composition factors for abstract finite groups. 
This paper extends classifications
of finite primitive and quasiprimitive groups of rank at most $3$ to a classification for the finite innately transitive groups. The new examples comprise three infinite families and three sporadic examples. A necessary step in this classification was the determination of certain configurations in finite almost simple $2$-transitive groups called special pairs.

\bigskip
\begin{center}
    Dedicated to the memory of Jacques Tits 
\end{center}
\end{abstract}

\maketitle

\section{Introduction}\label{sect:intro} 

This paper presents a classification of a family of finite permutation groups that have important geometrical significance and application, notably to describing the most symmetrical finite incidence structures called partial linear spaces. As explained in \cite{P06}, the sets of primitive, quasiprimitive, and innately transitive permutation groups may each be regarded as the building blocks of finite transitive permutation groups, and are analogues of composition factors for abstract finite groups. Considering three different subgroup lattices between a finite transitive permutation group $G$ and a point stabiliser $G_\alpha$ leads (see \cite{P06}) to three different permutational isomorphisms of $G$ with a subgroup of an iterated wreath product of the corresponding three basic kinds of permutation groups. Although the primitive groups have traditionally been regarded as the basic building blocks, both the quasiprimitive groups and the innately transitive groups have emerged as basic groups essential for describing graph symmetry as well as symmetry in finite geometry.
 The computer system Magma~\cite{magma, MAGMA1} contains a database of primitive permutation groups on sets of all sizes less than $4096$, and a similar database over the same range has recently been constructed by Bernhardt~\cite{B22} for quasiprimitive groups which are not primitive (he calls these `quimp' groups), while a similar database for innately transitive groups is more limited (\cite[Table 10.3]{B} enumerates all innately transitive groups of degree less than $60$ that are not quasiprimitive, see also Table~\ref{t:allPIT} in Section~\ref{sect:appendix}). 
 The primitive and quasiprimitive groups have also been classified by their \emph{rank}, with classifications in \cite{CAM, DGLPP, Lie87, LS86} for ranks $2$ and $3$. The purpose of this paper is to extend these classifications to innately transitive groups of rank $3$. (The {\it rank} of a transitive permutation group $G\leq \Sym(\Omega)$ on a set $\Omega$ is the number of orbits of $G$ on $\Omega\times\Omega$, or equivalently, the number of orbits in $\Omega$ of a point stabiliser. An innately transitive group that is not quasiprmitive has rank at least $3$.) 

\subsection*{Innately transitive and quasiprimitive rank $3$ permutation groups}

A permutation group is {\it innately transitive} if it has a transitive minimal normal subgroup, and such a subgroup is called a {\it plinth}. If every minimal normal subgroup is a plinth 
then the group is \emph{quasiprimitive}. As mentioned above, the finite imprimitive quasiprimitive groups of rank $3$ are known; they were classified in \cite{DGLPP}. We shall classify the rank $3$ groups which are \emph{properly innately transitive}, that is to say, innately transitive and not quasiprimitive. 

While \emph{$2$-transitive groups}, that is to say, primitive groups of rank $2$,  arise as the most highly symmetric automorphism groups of finite geometries called linear spaces, it is the innately transitive groups of rank $3$ that have this property for the more general class of partial linear spaces.  Our classification prepares the way for a classification of these highly symmetric geometries, and we discuss this, with small illustrative examples, below. We follow this by a discussion of our approach to the classification, which involves a subsidiary classification of a special subfamily of $2$-transitive permutation groups.

\subsection*{Partial linear spaces and rank $3$ permutation groups}
A \emph{partial linear space} $\D=(\PP,\B)$  is a structure with two types of objects, usually called points (elements of $\PP$)  and lines (elements of $\B$), and an incidence relation between  $\PP$ and  $\B$. We shall require the sets $\PP$ and $\B$ to be finite, each line to be incident with a constant number $k>2$ of points, each point to be incident with a constant number $r$ of lines, and each pair of  points to be incident with at most one line - and in this case the point pair is called \emph{collinear}. If each pair of points is collinear, then the geometry is a linear space: we will assume the opposite, that is, at least one pair of points is not collinear. 
The maximum degree of transitivity that can be achieved by the automorphism group of a partial linear space on pairs of points is  transitivity on (ordered) pairs of collinear points and on pairs of non-collinear points. In other words a group achieving this degree of transitivity has a rank $3$ action on points. 

The primitive rank 3 partial linear spaces are essentially classified: of the three possible types of primitive rank $3$ actions -- almost simple type, grid type, and affine type -- 
partial linear space examples for the  first two types were classified by Devillers \cite{D05,D08}, and those of affine type were recently treated in \cite{BDFP}, giving a satisfactory classification except for a few `hopeless' cases.
The problem of classifying the imprimitive rank $3$ partial linear spaces is largely untouched. Note that, to get a connected partial linear space with an imprimitive rank 3 automorphism group, each pair of collinear points must lie in distinct blocks of imprimitivity, so the number of imprimitivity blocks must be at least the line-size $k$. In the simplest case, when $k=3$ and the group has three blocks of imprimitivity,  the partial linear space is called a transversal design with $3$ blocks, and this is the only case that is classified so far:  Devillers and Hall \cite{DH06} showed that these spaces arise from multiplication tables of elementary abelian groups. There is little hope of a complete classification  of rank $3$ partial linear spaces as imprimitive rank 3 groups are so `wild'. 

Nevertheless our classification of innately transitive rank $3$ groups will, we hope, lay the groundwork for a future classification of the innately transitive rank 3 partial linear spaces.   
There are some beautiful examples which do not arise in any existing classification, and we mention two small rank $3$  examples. The first one,  Example~\ref{ex:pls1}, admits two rank $3$ groups, one of which is quasiprimitive and the other is properly innately transitive. The second one, Example~\ref{ex:pls2}, has a properly innately transitive rank $3$ group with no rank $3$ quasiprimitive subgroup, so this example will not arise if we consider only rank $3$ quasiprimitive automorphism groups.

\begin{example}\label{ex:pls1}
{\rm
{\bf [An innately transitive and  quasiprimitive example]} The point set of this
partial linear space, which appears in Figure~\ref{tikz}, is $\mathbb{Z}_{14}$ and the lines are defined by the sets of the form
$\{x,x+1,x+4,x+6\}$, $x\in\mathbb{Z}_{14}$.  The automorphism  group of this design is $C_2\times
\PSL(3,2)$, which is innately transitive of rank $3$. Its plinth $\PSL(3,2)$ acts transitively and quasiprimitively, also of rank $3$, on the $14$ points.
}
\end{example}

\begin{figure}[h]
\begin{tikzpicture}[scale =0.7,line width=1pt]
\newcommand{\twoarcs}{
\node (v1) at (0:2cm) [ball color=darkgreen, circle, draw=black, inner sep=1pt] {};  
\node (v2) at (360/7:2cm) [ball color=darkgreen, circle, draw=black, inner sep=1pt] {};
\node (v6) at (360*5/7:2cm) [ball color=darkgreen, circle, draw=black, inner sep=1pt] {};
\node (v7) at (360*6/7:2cm) [ball color=darkgreen, circle, draw=black, inner sep=1pt] {};  

\begin{scope}[rotate=-360/7]
\node (w1) at (0:5cm) [ball color=darkgreen, circle, draw=black, inner sep=1pt] {};  
\node (w2) at (360/7:5cm) [ball color=darkgreen, circle, draw=black, inner sep=1pt] {};
\node (w4) at (360*3/7:5cm) [ball color=darkgreen, circle, draw=black, inner sep=1pt] {};
\node (w5) at (4*360/7:5cm) [ball color=darkgreen, circle, draw=black, inner sep=1pt] {};
\end{scope}

\node (c11) at (15:40mm) {};
\node (c12) at (40:40mm) {};
\node (c21) at (120:4mm) {};
\node (c22) at (210:6mm) {};
\node (c31) at (270:30mm) {};
\node (c32) at (290:40mm) {};

\node (d11) at (150:58mm) {};
\node (d12) at (120:53mm) {};
\node (d21) at (50:80mm) {};
\node (d22) at (30:70mm) {};
\node (d31) at (335:45mm) {};
\node (d32) at (320:30mm) {};

\tikzset{
    electron/.style={draw=blue}, 
    gluon/.style={draw=purple}, 
}
\draw[electron] (v1) ..  controls (c11) and (c12)..  (v2);
\draw[color=blue] (v2) ..  controls (c21) and (c22) ..  (v6);
\draw[electron] (v6) ..  controls (c31) and (c32) ..  (w1);

\draw[gluon] (w5) ..  controls (d11) and (d12)..  (w4);
\draw[color=purple] (w4) ..  controls (d21) and (d22) ..  (w2);
\draw[gluon] (w2) ..  controls (d31) and (d32) ..  (v1);
}

\foreach \x in {0,1,2,3,4,5,6}{
\begin{scope}[rotate=360*\x/7]
\twoarcs
\end{scope}
}
  {
\node (v1) at (0:2cm) [ball color=thistle!10, circle, draw=thistle, inner sep=1pt,minimum width=15pt] {1};  
\node (v2) at (360/7:2cm) [ball color=thistle!10, circle, draw=thistle, inner sep=1pt,minimum width=15pt] {0};
\node (v3) at (2*360/7:2cm) [ball color=thistle!10, circle, draw=thistle, inner sep=1pt,minimum width=15pt] {11};
\node (v4) at (3*360/7:2cm) [ball color=thistle!10, circle, draw=thistle, inner sep=1pt,minimum width=15pt] {3};
\node (v5) at (4*360/7:2cm) [ball color=thistle!10, circle, draw=thistle, inner sep=1pt,minimum width=15pt] {5};
\node (v6) at (5*360/7:2cm) [ball color=thistle!10, circle, draw=thistle, inner sep=1pt,minimum width=15pt] {6};
\node (v7) at (6*360/7:2cm) [ball color=thistle!10, circle, draw=thistle, inner sep=1pt,minimum width=15pt] {2};  

\begin{scope}[rotate=-360/7]
\node (w1) at (0:5cm) [ball color=thistle!10, circle, draw=thistle, inner sep=1pt,minimum width=15pt] {4};  
\node (w2) at (360/7:5cm) [ball color=thistle!10, circle, draw=thistle, inner sep=1pt,minimum width=15pt] {10};
\node (w3) at (2*360/7:5cm) [ball color=thistle!10, circle, draw=thistle, inner sep=1pt,minimum width=15pt] {12};
\node (w4) at (3*360/7:5cm) [ball color=thistle!10, circle, draw=thistle, inner sep=1pt,minimum width=15pt] {13};
\node (w5) at (4*360/7:5cm) [ball color=thistle!10, circle, draw=thistle, inner sep=1pt,minimum width=15pt] {9};
\node (w6) at (5*360/7:5cm) [ball color=thistle!10, circle, draw=thistle, inner sep=1pt,minimum width=15pt] {8};
\node (w7) at (6*360/7:5cm) [ball color=thistle!10, circle, draw=thistle, inner sep=1pt,minimum width=15pt] {7};  
\end{scope}
}
\end{tikzpicture}
\caption{Example \ref{ex:pls1}}
\label{tikz}

\end{figure}

\begin{example}\label{ex:pls2}
{\rm
{\bf [A properly innately transitive example]} The point set of this
partial linear space consists of the $15$ points of the projective $3$-space $\PG(3,2)$, and the  line set is the set of all lines of $\PG(3,2)$ except for $5$ pairwise disjoint lines (with natural incidence). The automorphism group of this design is $(C_3\times A_5). 2=\GammaL(2,4)$. It is properly innately transitive of rank $3$, and its largest quasiprimitive subgroup is $A_5.2=S_5$ which has rank $4$ on points.  
}
\end{example}

\subsection*{Properly innately transitive groups and special $2$-transitive groups}

Let $\PIT$ denote the family of finite, properly innately transitive permutation groups, and let $\Q$ be the family of finite quasiprimitive permutation groups. Let $G\in\PIT$. Then $G\leq\Sym(\Omega)$ for some finite set $\Omega$ and $G$ has a plinth $M$ (since $G$ is innately transitive). 
Since $G$ is not quasiprimitive, it follows from \cite[Lemma 4.2(2) and Proposition 5.3]{BP} that the centraliser
\begin{equation}\label{e:cent}
    C:=\C_G(M)\ \mbox{satisfies $1\ne C\lhd G$ and $C$ is semiregular and intransitive on $\Omega$.}
\end{equation}
(A permutation group is \emph{semiregular} if the only element fixing a point is the identity.) Moreover $M$ is nonabelian as otherwise $M\subseteq C$ forcing $C$ to be transitive; and $M$ is the unique plinth of $G$ by \cite[Lemma 5.1]{BP}. Thus
\begin{equation}\label{e:plinth}
    M=T^k\ \mbox{for some nonabelian finite simple group  $T$ and integer $k\geq1$.}
\end{equation}
For a group $H$ acting on a set $\Delta$, the permutation group induced by $H$ on $\Delta$ is  denoted by $H^\Delta$. We often have a single group acting on more than one set, and this notation helps us, and we hope also the reader, to be clear about the actions.
This convention means in particular that, since $G\leq {\rm Sym}(\Omega)$, we have $G^\Omega =G$. Also, noting that the set $\Sigma$ of $C$-orbits in $\Omega$ is a $G$-invariant partition of $\Omega$, it follows from \cite[Lemma 4.3 and Proposition 5.3]{BP} that the permutation group induced by $G$ on $\Sigma$ is $G^\Sigma=G/C$ and is quasiprimitive with unique plinth $M^\Sigma = MC/C\cong M$. Further, for $\sigma\in\Sigma$ and a point $\alpha\in\sigma$, it follows from \cite[Lemma 5.4]{BP} that
\begin{equation}\label{e:sigmaaction}
    \mbox{$M_\alpha\lhd M_\sigma$ and the induced group $(M_\sigma)^\sigma = M_\sigma/M_\alpha\cong C$ and is regular on $\sigma$.}
\end{equation}
Thus we have a well-defined map: 
\begin{equation}\label{e:phi}
    \varphi:\PIT\to\Q\quad \mbox{given by  $\varphi(G^\Omega)=G^\Sigma$, for $G\in\PIT$ acting on $\Omega$, and $\Sigma$ as above.}
\end{equation}
The map is not onto $\Q$: indeed, each group $\varphi(G^\Omega)$ has a unique nonabelian plinth but there are quasiprimitive groups with two plinths, or with an elementary abelian plinth, see \cite{P93}. The image $\varphi(G^\Omega)$ may be primitive, as we see from Examples~\ref{ex:pls1} and~\ref{ex:pls2} where these groups are $\PSL(3,2)$ and $S_5$ of degrees $7$ and $5$, respectively. In addition, for each $G^\Sigma=\varphi(G^\Omega)$,  its subgroup $(M_\sigma)^\Sigma$ has a quotient 
$(M_\sigma)^\Sigma/(M_\alpha)^\Sigma\cong M_\sigma/M_\alpha\cong C$, and the stabiliser $(G_\sigma)^\Sigma$ acts by conjugation on this quotient. (This is true because $(M_\sigma)^\Sigma=M_\sigma C/C$ and $(M_\alpha)^\Sigma=M_\alpha C/C$ are both $(G_\sigma)^\Sigma$-invariant, and we note that $(M_\sigma C)/(M_\alpha C)\cong M_\sigma/M_\alpha$ which is isomorphic to $C$ by \eqref{e:sigmaaction}.)
The quotient $M_\sigma/M_\alpha$, or equivalently the group $C$, is not in general uniquely determined by the quasiprimitive group $G^\Sigma$, see Example~\ref{ex:q1} for a small example. 

\begin{example}\label{ex:q1}
{\rm
{\bf [Example to show that $\varphi$ is not one-to-one]} Let $M=\PSL(2,25)$, acting $2$-transitively on the $26$ points of the projective line $\Sigma=\PG(1,25)$, and note that, for  $\sigma\in\Sigma$, $M_\sigma=[C_{5}^2]\rtimes C_{12}$, and for  each $i\in\{2, 3\}$, $M_\sigma$ has a proper normal subgroup $R_i= [C_{5}^2]\rtimes C_{12/i}$ with cyclic quotient $M_\sigma/R_i\cong C_i$.  The action of $M$ by right multiplication  on $\Omega_i=\{R_ix\mid x\in M\}$ is transitive and $\C_{\Sym(\Omega_i)}(M)\cong C_i$. This implies that $G_i:=\C_{\Sym(\Omega_i)}(M)\times M=C_i\times \PSL(2,25) < \Sym(\Omega_i)$ is properly innately transitive with plinth $M$ and hence $G_i^{\Omega_i}$ lies in $\PIT$. Moreover, $\varphi(G_i^{\Omega_i})=M^\Sigma$.
}
\end{example}

We therefore consider the enriched map $\widehat\varphi$ which records both the quasiprimitive group $G^\Sigma$ and also the subgroup $(M_\alpha)^\Sigma$:  
\begin{equation}\label{e:phi2}
    \widehat\varphi:G^\Omega\to (G^\Sigma, R)\ \mbox{with $R=(M_\alpha)^\Sigma$ a proper normal $G_\sigma^\Sigma$-invariant subgroup of $M_\sigma^\Sigma$,}
\end{equation}
and note that, in Example \ref{ex:q1}, we have $\widehat\varphi(G_i)=(M^\Sigma, R_i)$ for $i\in\{2, 3\}$.  In many cases the subgroup $R$ determines the point $\sigma\in\Sigma$ uniquely, for example, this is true if $G^\Sigma$ is primitive and $M^\Omega$ is not regular (Lemma~\ref{lem:uniquesigma}). For this reason we have not incorporated the class $\sigma$ explicitly in our notation for the map $\widehat\varphi$. On the other hand, the map $\widehat\varphi$ does depend on the class $\sigma\in\Sigma$, and $\widehat\varphi$ could have been defined for any $\sigma$. It is useful (especially when studying the preimages of a pair $(G^\Sigma,R)$ as in \eqref{e:phi2}) to introduce the following extension of the notion of permutational isomorphism  for these pairs.

\begin{definition}\label{def:equiv}
{\rm 
Let $(G^\Sigma, R)$ and $(H^\Delta, S)$ be pairs such that $G^\Sigma, H^\Delta\in\mathcal{Q}$ with plinths $M^\Sigma, N^\Delta$ as in \eqref{e:plinth}, and such that, for some $\sigma\in\Sigma$ and $\delta\in\Delta$, the subgroups $R, S$ are proper normal subgroups of $(M^\Sigma)_\sigma, (N^\Delta)_\delta$ which are invariant under $(G^\Sigma)_\sigma, (H^\Delta)_\delta$, respectively.  Then $(G^\Sigma, R)$ and $(H^\Delta, S)$ are said to be \emph{equivalent}, denoted $(G^\Sigma, R)\approx (H^\Delta, S)$, if and only if there exists a group isomorphism $f:G^\Sigma\to H^\Delta$ and a bijection $\psi:\Sigma\to\Delta$ such that $\sigma^\psi = \delta$, $R^f=S$, and $(f,\psi)$ is a \emph{permutational isomorphism}, that is to say, for all $\alpha\in\Sigma, g\in G^\Sigma$ we have $(\alpha^g)^\psi = (\alpha^\psi)^{(g)f}$.
}
\end{definition}

 We note that $\approx$ as defined in Definition~\ref{def:equiv} is an equivalence relation, and in particular $\approx$ is a transitive relation.
A natural example of equivalence occurs for any $(G^\Sigma,R)$ as in \eqref{e:phi2} associated with changing the element $\sigma$: for any  $\tau\in\Sigma$, we may choose $x\in G^\Sigma$ such that $\sigma^x=\tau$. Then taking $\psi=x$,  and $f$ to be  the inner automorphism $\iota_x$ of $G^\Sigma$ corresponding to conjugation by $x$, we find that $(G^\Sigma,R)$ is equivalent to $(G^\Sigma,R^x)$ with $R^x$ a proper  normal $(G^\Sigma)_\tau$-invariant subgroup of $(M^\Sigma)_\tau$.

Our particular interest is the subset $\PIT_3$ of $\PIT$ consisting of all finite, rank $3$, properly innately transitive permutation groups. We show (Lemma~\ref{lem2}) that for each group $G\in\PIT_3$ with $G$ acting on a set $\Omega$, the image $\widehat\varphi(G^\Omega)=(G^\Sigma, R)$ is such that 
\begin{itemize}
    \item $G^\Sigma$ is an almost simple $2$-transitive group with nonabelian simple socle $M^\Sigma\cong M$;
    
    \item $1\ne R\lhd (M^\Sigma)_\sigma$  for some $\sigma\in\Sigma$, and $R$ is $(G^\Sigma)_\sigma$-invariant;
    
    \item $(M^\Sigma)_\sigma/R$ is elementary abelian and nontrivial (and isomorphic to $C$);  
    
    \item  $(G^\Sigma)_\sigma$ acts transitively by conjugation on the  nontrivial elements of $(M^\Sigma)_\sigma/R$. 
\end{itemize}
We call permutation groups $G^\Sigma$ with specified subgroup $R$ having these properties \emph{special pairs}, see Definition~\ref{def1}. 
Our first objective is:

\medskip\noindent
{\bf Objective 1:}\quad Determine each $2$-transitive group  $G^\Sigma$, and each of its subgroups $R$ such that $(G^\Sigma,R)$ is a special pair.

\medskip
This classification is completed in Theorem~\ref{t:special}, where we identify several infinite families and a number of sporadic examples (see Table~\ref{t:allr}). We therefore have an explicit set $\widehat\SS$ of all special pairs, up to equivalence, and we note that the subset $\SS$ consisting of all groups $X  \le {\rm Sym}(\Sigma)$ occurring in special pairs $(X^\Sigma,R)$ certainly contains  $\varphi(\PIT_3)$, but the inclusion may be proper.

Our major challenge is to find preimages in $\PIT$ of special pairs, and among them to identify those of rank $3$.
Let $(X^\Sigma,R)\in\widehat\SS$, where $X$ is a $2$-transitive group on $\Sigma$ with nonabelian socle $M$, and $M_\sigma/R\cong C$ is elementary abelian, for some $\sigma\in\Sigma$. 
We show in Lemma~\ref{lem3}$(a)$ that, for $\Omega=\{Rx\mid x\in M\}$ with $M$ acting by right multiplication, the group $\C_{\Sym(\Omega)}(M)\times M\cong C\times M$, and $(C\times M)^\Omega$ lies in $\PIT$ with plinth $M^\Omega$. Moreover by Lemma~\ref{lem3}$(b)$,  given $G_0^{\Omega_0}\in\PIT$ such that 
$\widehat{\varphi}(G_0^{\Omega_0})\approx (X^\Sigma,R)$, the group $G_0^{\Omega_0}$ is permutationally isomorphic to a group $G^\Omega$ such that $C\times M\leq G\leq N_{\Sym(\Omega)}(M)$ and $G/C\cong X$.

\medskip\noindent
{\bf Objective 2:}\quad Determine which of the special pairs $(X^\Sigma,R)$ lie in $\widehat\varphi(\PIT)$, and for each such pair  find, up to permutational isomorphism, the groups $G^\Omega\in\PIT$ such that  $\widehat\varphi(G^\Omega)=(X^\Sigma,R)$.

\medskip
In  Theorem \ref{t:PITSP} we classify all such special pairs $(X^\Sigma,R)$  up to equivalence, and all groups $G^\Omega$ in $\PIT$ up to permutational isomorphism.
While each of the groups $G^\Omega$ found is an interesting permutation group,  not all of them have rank $3$. Here is a small example.

\begin{example}\label{ex:ch2}
{\rm 
Let $G=C\times M=C_2\times \PSL(2,5)$. Let $\Sigma$ be the set of six points of the projective line ${\rm PG}(1,5)$, admitting the natural $M$-action. Let $\sigma\in\Sigma$ and let $M_\sigma=D_{10}$, the stabiliser of $\sigma$. Let $L$ be the unique subdirect subgroup of $C\times M_{\sigma}$ such that $L\cong D_{10}$, and consider the coset action of $G$ on $\Omega=\{Lx\mid x\in G\}$. Then $M^\Omega$ is a transitive minimal normal subgroup of $G^\Omega\cong G$, and the stabiliser in $M$ of the point $\alpha=L\in\Omega$ is $M_\alpha= M\cap L \cong C_5$, of index $2$ in $M_\sigma$. Thus  $G^\Omega\in\PIT$ and it has rank $4$. The normal subgroup $C^\Omega$ has a set of six orbits of length $2$, and the $G$-action induced on them is equivalent to $G^\Sigma = G/C = (C\times M)/C = M^\Sigma\cong M$, and $R:=(M_\alpha)^\Sigma$ satisfies $C_5=R \lhd (M^\Sigma)_\sigma =D_{10}$. Thus $\widehat\varphi(G^\Omega)=(G^\Sigma,R)$, and this is a special pair. 
}
\end{example}

Thus our final objective is the following, and we achieve it in  Theorem \ref{t:PIT3}.

 \medskip\noindent
{\bf Objective 3:}\quad Determine the family $\PIT_3$. 

\medskip 
Finally we note in passing that, for every group $G^\Omega\in\PIT$ such that $\widehat{\varphi}(G^\Omega)=(X^\Sigma, R)$ is a special pair, each part $\sigma\in\Sigma$ has the property that $(G_\sigma)^\sigma$ is an affine $2$-transitive group (Definition~\ref{def1}). Also, if $|\sigma|\geq 5$, then the induced group $(G_\sigma)^\sigma$ is a proper subgroup of $\Sym(\sigma)$, and hence $\sigma$ is a \emph{beautiful subset} as defined in \cite[Section 1]{GS} and the group $G^\Omega$ is not \emph{binary} in the sense of Cherlin, see \cite[Lemma 2.2]{GS}. Groups with this property are determined in Theorem~\ref{t:PITSP} and correspond to Lines 2 and 4 of Table~\ref{t:allr}. They give examples of non-binary permutation groups, beyond the well-studied primitive ones.

\subsection{Statements of the main results}\label{sub:outline}

Our first result gives a complete classification of special pairs, which were  introduced before Objective 1, and are defined formally in Definition~\ref{def1}. 

\begin{maintheorem}\label{t:special}
Let $X\leq {\rm Sym}(\Sigma)$ be almost simple and $2$-transitive with socle $M$. Then $X$ has a subgroup $R$ such that $(X^\Sigma,R)$ is a special pair, relative to an elementary abelian quotient of order $r$, if and only if $M$, $n=|\Sigma|$, and $r$ are as in one of the lines of Table~$\ref{t:allr}$,  with some extra conditions on $X$ given in the last column. Moreover, in each case $r$ is a prime, and for each $X$ and $r$, the subgroup $R$ is uniquely determined up to conjugation. 
\end{maintheorem}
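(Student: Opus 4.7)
The plan rests on the classification of finite almost simple $2$-transitive permutation groups (a consequence of CFSG, due to Burnside, Hering, Curtis--Kantor--Seitz, and others), which reduces the problem to a short explicit list of socles $M$ together with their $2$-transitive actions on $\Sigma$. The families are: $A_n$ in the natural action ($n\ge 5$); $\PSL(d,q)$ on projective points; $\PSU(3,q)$, $\mathrm{Sz}(q)$ and $^2G_2(q)$ on their natural $2$-transitive sets; the Mathieu groups $M_{11}, M_{12}, M_{22}, M_{23}, M_{24}$; and the sporadic extras $\PSL(2,11)$ on $11$ points, $A_7$ on $15$ points, $M_{11}$ on $12$ points, $\mathrm{HS}$ on $176$ points, and $\mathrm{Co}_3$ on $276$ points. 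For each socle the structure of $M_\sigma$ is well known, and $X$ satisfies $M\le X\le \Aut(M)$.

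From the definition of special pair, any candidate $R$ contains $(M_\sigma)'(M_\sigma)^p$ for some prime $p$, and corresponds to a proper $(X^\Sigma)_\sigma$-invariant subspace $V$ of the largest elementary abelian $p$-quotient $A_p:=M_\sigma/(M_\sigma)'(M_\sigma)^p$ such that $(X^\Sigma)_\sigma$ acts transitively on the nonzero elements of $A_p/V$. Thus the core computation reduces to determining $A_p$ for each family and each prime $p$. I would proceed case by case: for $A_n$ with $n\ge 6$ the point stabiliser $A_{n-1}$ is perfect, so only $n=5$ (and $n=6$ via the exceptional outer automorphism of $S_6$) can contribute; for $\PSL(d,q)$ the stabiliser is a maximal parabolic whose abelianisation is a cyclic subgroup of $\mathbb{F}_q^\times$; for $\PSU(3,q)$, $\mathrm{Sz}(q)$ and $^2G_2(q)$ the stabiliser is a Borel with toric abelianisation; the small sporadic stabilisers are read off directly from the ATLAS (and checked in \textsc{Magma} where useful). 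In each instance the $(X^\Sigma)_\sigma$-invariant subgroups $V$ can be enumerated and the transitivity condition verified.

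The assertion that $r$ is always prime should emerge from this analysis: whenever $A_p/V$ has order $p^k$ with $k\ge 2$, the image of $(X^\Sigma)_\sigma$ in $\GL(k,p)$ is too small to act transitively on the $p^k-1$ nonzero vectors (typically it is cyclic of order dividing $q-1$, possibly extended by field automorphisms of order a divisor of $\log_p q$), forcing $k=1$. Uniqueness of $R$ up to conjugation will then follow from the fact that once the prime $r$ is fixed, $R$ is the unique $(X^\Sigma)_\sigma$-invariant subgroup of $M_\sigma$ of index $r$ satisfying the transitivity constraint, by the already-established description of $A_p$. Conversely, for each candidate row of Table~\ref{t:allr} I would construct $R$ explicitly and verify by direct computation that $(X^\Sigma,R)$ is indeed a special pair.

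The main obstacle will be the $\PSL(d,q)$ case: the maximal parabolic stabiliser has a nontrivial semisimple Levi factor, and one must carefully track the combined action of diagonal, field and graph automorphisms on the toric quotient to identify precisely which cyclic subgroups of $\mathbb{F}_q^\times$ arise as $M_\sigma/R$ and extend to $(X^\Sigma)_\sigma$-invariant subgroups. Small dimensions and small $q$ may give rise to exceptional examples that have to be isolated by hand; these, along with similar coincidences in the $\PSU(3,q)$, $\mathrm{Sz}(q)$, $^2G_2(q)$ and Mathieu cases, are expected to account for the sporadic rows in Table~\ref{t:allr}.
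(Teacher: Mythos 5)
Your overall strategy is the same as the paper's: run through the CFSG classification of almost simple $2$-transitive groups, reduce the search for $R$ to the elementary abelian quotients of $M_\sigma$ (equivalently the abelianisation of the parabolic/Borel/stabiliser), impose the transitivity of $X_\sigma$ on the nonzero vectors of $M_\sigma/R$, and extract the arithmetic conditions ($o_r(q_0)=r-1$, coprimality with the field-automorphism part) in the linear and unitary cases. That is exactly how the paper proceeds, and your observations about primality of $r$ and uniqueness of $R$ are sound in substance, since in every surviving case the abelianisation of $M_\sigma$ is cyclic, so the largest elementary abelian $p$-quotient already has order $p$ and the index-$r$ subgroup is unique.

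However, there is a concrete gap: the explicit list of $2$-transitive socles you start from is incomplete. You omit the symplectic family $M=\Sp(2d,2)$, $d\geq 3$, in its two $2$-transitive actions of degree $2^{2d-1}\pm 2^{d-1}$ on quadratic forms of type $\pm$. This family genuinely contributes to the classification (Line 5 of Table~\ref{t:allr}): here $M_\sigma=O^{\pm}(2d,2)$ has the index-$2$ normal subgroup $\Omega^{\pm}(2d,2)$, giving a special pair with $r=2$. Since your procedure only examines the families you enumerate, this case would never be inspected and an entire infinite family of special pairs would be missing from your Table; nothing later in your plan would detect the omission. Two smaller points: your parenthetical that $n=6$ might contribute in the alternating case via the outer automorphism of $S_6$ is spurious (the stabiliser $A_5$ is perfect, so no $R$ exists whatever $X$ is); and when you treat ${}^2G_2(q)$ you must explicitly include $q=3$, where the group is not simple and the socle is $\PSL(2,8)$ of degree $28$ --- this is the source of Line 7, and it is easy to lose if one restricts to simple ${\rm Ree}(q)$ with $q>3$.
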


\medskip\noindent

Concerning the conditions in Lines $2$ and $4$ of Table \ref{t:allr}, we note in particular that if $r=2$ in these Lines, then all the conditions in Table \ref{t:allr} are vacuously satisfied, except for the divisibility condition on $r$ which in these cases forces $q=q_0^a$ to be odd. 
On the other hand, if $r$ is odd then the conditions imply that $q$ is not a prime unless we are in Line $4$ with $r=3$ and $q\equiv -1\pmod{9}$. More details of this implication are given in Remark~\ref{re:Table1}.

\begin{table}[h]
\resizebox{\columnwidth}{!}{
\begin{tabular}{|l|c p{2.2cm} p{5.5cm}|p{5cm}|}
\hline
Line  & $M$                                  & $n$              & $r$    & Conditions on $X$              \\ \hline

1 & $A_5$                                  & $5$        & $3$     & $X=S_5$ \\  
2 & $\PSL(d,q)$                                  & $\frac{q^d-1}{q-1}$ & \begin{minipage}[t]{5cm}prime  such that $r \mid\frac{q - 1}{(d,q - 1)}$\\  and $o_r(q_0)=r-1$ 
\end{minipage}  &   \begin{minipage}[t]{5cm} $|X/(X\cap\PGL(d,q))|=a/j$ \\such that  $(j,r-1)=1$,\\
and $(r-1)\mid (a/j)$\end{minipage} \\ 
3&$\PSL(3,2)$&$7$&$2$&\\

4 & $\PSU(3,q)$                                  & $q^3+1$       &\begin{minipage}[t]{5cm} prime such that  $r \mid \frac{q^2 - 1}{(3,q+1)}$, \\   $o_r(q_0)=r-1$ 
\end{minipage}&       \begin{minipage}[t]{5cm} 
$|X/(X\cap\PGU(3,q))|=2a/j$\\ such that $(j,r-1)=1$,\\
and $(r-1)\mid (2a/j)$\end{minipage}\\ 
5& $\Sp(2d,2)$                                  & $2^{2d-1}+\varepsilon 2^{d-1}$&$2$ &  $\varepsilon=\pm$, $d\geq3$         \\ 
6 & ${\rm Ree}(q)$                              & $q^3+1$ &$2$&  $q=3^{2a+1}> 3$         \\ 
7&$\PSL(2,8)$&$28$&$2$&$X=\PGammaL(2,8)={\rm Ree}(3)$\\
8 & $\rm{M}_{11}$                                    &$11$   &$2$    &          \\          
9 & $\rm{HS}$                                    &$176$     &$2$  &                                          \\  
10 & $\rm{Co}_3$                                    &$276$   &$2$    &                                          \\ 
\hline

\end{tabular}}
\caption{Groups with a special pair; in Lines 2 and 4, $q=q_0^a$ with $q_0$ prime }\label{t:allr}
\end{table}

The next result determines all properly innately transitive groups which map under $\widehat{\varphi}$ to one of the special pairs determined in Theorem~\ref{t:special}. We note that the ranks of the groups in lines 2 and 4 in Table~\ref{t:obj2} depend on arithmetical conditions on $q$ and $r$, (see Table~\ref{t:allr}).

\begin{maintheorem}\label{t:PITSP}
Let $i$ be an integer such that $1\leq i\leq 10$,  and let $X \le {\rm Sym}(\Sigma)$ such that $(X^\Sigma, R)$ is a special pair with $M=\Soc(X)$, $n=|\Sigma|$ and  $r=|M_\sigma:R|$ satisfying the conditions of Line $i$ in Table~$\ref{t:allr}$.  Then there exists a group $G \le {\rm Sym}(\Omega)$ such that  $G^\Omega\in\PIT$ with plinth $M$,   $\widehat{\varphi}(G^\Omega)=(X^\Sigma,R)$ and, up to permutational isomorphism, each such $G^\Omega$ satisfies the following, where $C=\C_{\Sym(\Omega)}(M)$. 

\begin{enumerate}
     \item [$(a)$]
     $i\in\{1,7\}$ and
     $G=N_{\Sym(\Omega)}(M)$ acting on $\Omega$  as in Construction $\ref{con:gen}$, with
     $G=(C_3\times A_5).2$ or $G=C_2\times {\rm Ree}(3)$, of rank $3$ or $4$, respectively.
     
    \item [$(b)$] 
    $i\in\{ 3, 8, 9, 10\}$ or $i=5$, and $G=C\times M$  acting on $\Omega$ as in Construction $\ref{con:gen}$, with $M=\PSL(3,2), \rm{M}_{11}, \rm{HS},\rm{Co}_3,$ or $\Sp(2d,2)$, with $G$ of rank $3, 3, 4, 4, 4$, respectively.  
    
    \item [$(c)$]
    $i\in\{ 2, 4, 6 \}$ and $G$ satisfies $C\times M\leq G\leq \overline{G}< \Sym(\Omega)$ and $G/C\cong X$
    with $i$, $M$, $\overline{G}$, and the rank of ${G}$, as in Table~$\ref{t:obj2}$.
    
\end{enumerate}
\end{maintheorem}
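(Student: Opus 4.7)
The plan is to exploit Lemma~\ref{lem3} to reduce the problem to finding subgroups $G$ with $C\times M\le G\le\overline{G}:=N_{\Sym(\Omega)}(M)$ satisfying $G/C\cong X$ and inducing the prescribed $2$-transitive action on $\Sigma$, and then to carry out a case-by-case analysis following the ten lines of Table~\ref{t:allr}.

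First I would establish existence and the basic reduction. By Lemma~\ref{lem3}$(a)$, setting $\Omega=\{Rx\mid x\in M\}$ with $M$ acting by right multiplication yields $(C\times M)^\Omega\in\PIT$ with plinth $M^\Omega$, where $C=\C_{\Sym(\Omega)}(M)\cong M_\sigma/R$, and a direct computation shows $\widehat{\varphi}((C\times M)^\Omega)\approx(M^\Sigma,R)$. For general $X$, I would use Construction~\ref{con:gen} to supply an explicit $G$ with $C\times M\le G\le\overline{G}$ and $G/C\cong X$. Conversely, Lemma~\ref{lem3}$(b)$ guarantees that every preimage $G_0^{\Omega_0}\in\widehat{\varphi}^{-1}((X^\Sigma,R))$ is permutationally isomorphic to some such $G$. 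Since $\overline{G}$ centralises $C$, the quotient $\overline{G}/C$ embeds in $\mathrm{Aut}(M)$, and its induced action on $\Sigma$ (identified with $M/M_\sigma$) coincides with this embedding; hence the valid lifts of $X$ correspond exactly to subgroups of $\mathrm{Aut}(M)$ that contain $\mathrm{Inn}(M)$, are isomorphic to $X$, and whose stabiliser of $\sigma$ normalises $R$.

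Next I would run the case analysis line by line of Table~\ref{t:allr}. For $i\in\{3,5,8,9,10\}$, either $\mathrm{Out}(M)=1$ or no non-trivial outer automorphism preserves the specified $2$-transitive action on $\Sigma$ (for instance, the graph automorphism of $\PSL(3,2)$ swaps points and hyperplanes of $\PG(2,2)$, and the outer automorphism of $\mathrm{HS}$ exchanges its two $2$-transitive actions of degree $176$). Hence $X=M$ is forced, giving $G=C\times M$ and establishing part~$(b)$. For $i\in\{1,7\}$, I would check that $X=S_5$ or $X=\PGaL(2,8)=\mathrm{Ree}(3)$ lifts uniquely up to conjugacy inside $\overline{G}$ and that this lift exhausts $\overline{G}$, yielding part~$(a)$. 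For $i\in\{2,4,6\}$, where $X/M$ may be a non-trivial subgroup of $\mathrm{Out}(M)$ permitted by Table~\ref{t:allr}, I would parameterise the lifts of $X$ in $\overline{G}$ using the structure of $C\cong M_\sigma/R$ as an $(X/M)$-module supplied by the proof of Theorem~\ref{t:special}, and read off the entries of Table~\ref{t:obj2}.

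Finally, the rank of each $G^\Omega$ is computed from the $G_\alpha$-orbits on $\Omega$, using the identification $\Omega\leftrightarrow C\times(M/M_\sigma)$ together with the $M_\alpha$-double-coset description of $M$-orbits on $M/M_\sigma$. The expected main obstacle is part~$(c)$: in Lines~$2$ and $4$ the rank depends on the arithmetic condition $o_r(q_0)=r-1$ of Table~\ref{t:allr}, which controls how $X/M$ permutes the non-identity elements of $C$, and permutationally inequivalent lifts must be distinguished with care. Translating this module-theoretic input into the explicit rank entries recorded in Table~\ref{t:obj2} is the technical heart of the proof.
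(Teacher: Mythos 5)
Your skeleton coincides with the paper's: reduce via Lemma~\ref{lem3} to subgroups $G$ with $C\times M\le G\le N_{\Sym(\Omega)}(M)$ and $G/C\cong X$, force $X=M$ (hence $G=C\times M$) in the lines where no outer automorphism preserves the given $2$-transitive action, identify the normaliser in the remaining lines, and compute ranks case by case. But one step as stated is wrong and, more importantly, the decisive content is missing. The claim that ``$\overline{G}$ centralises $C$'' is false in general: in Line~2, for instance, $C=Z/Y$ with $Y=\langle\omega^rI\rangle$ and the field automorphism $\phi$ sends $\omega IY$ to $\omega^{q_0}IY$, and the very hypothesis $o_r(q_0)=r-1$ of Table~\ref{t:allr} makes this action nontrivial whenever $r$ is odd. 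The embedding $\overline{G}/C\hookrightarrow\Aut(M)$ does hold, but because $C=\C_{\overline G}(M)$ is the kernel of the conjugation action on $M$, not because $C$ is centralised; this misreading of how $X$ interacts with $C$ resurfaces when you try to locate the source of the rank dichotomy.

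The genuine gap is that the rank determinations, which are the substance of the theorem (the rank entries in parts $(a)$--$(c)$ and Table~\ref{t:obj2}), are deferred without a workable mechanism. For Line~5 a generic $M_\alpha$-double-coset computation does not settle whether $H_\alpha=R\cup Rtc$ is transitive on $\Omega\setminus\sigma$: the paper's rank-$4$ proof (Lemma~\ref{lem:sprank}) hinges on the Dickson-invariant fact (Lemma~\ref{DicksonInv}) that an element of $\Omega^{\varepsilon}(V,Q)\cap O(V,Q^g)$ automatically lies in $\Omega^{\varepsilon}(V,Q^g)$, forcing $H_{\alpha,\sigma'}$ to fix a point of $\sigma'$; nothing in your plan supplies this idea. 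Similarly, the rank-$4$ conclusion for the Ree line needs the Borel/torus structure ($B\cap B^{n_0}=T$, $t^\phi=t^3$), not a module-theoretic argument. Finally, your stated diagnosis for Lines~2 and~4 is mis-aimed: $o_r(q_0)=r-1$ is part of the special-pair hypothesis in every case and does not distinguish rank $3$ from rank $4$; the dichotomy is decided (via Lemma~\ref{lem4}) by whether $G_{\alpha,\sigma'}$ is transitive on a second block $\sigma'$, which in the unitary case reduces to whether $r\mid q-1$ (rank $3$) or $r\mid q+1$ (rank $4$), and in the linear case with $(d,r)=(2,2)$ to whether $G^\Sigma\not\le\PSigmaL(2,q)$ --- conditions that an analysis of the $(X/M)$-action on $C$ alone will not surface. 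As it stands the proposal yields the reduction and the lines where $X=M$ is forced (modulo the finite computational rank checks), but not part $(c)$ nor the Line~5 rank, i.e.\ not the heart of the theorem.
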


\begin{table}[h]
\begin{tabular}{|l|cccl|}
\hline
$i$  & $M$                                    & $\ff$ as in  & Rank of $G$ & Ref. for rank            \\ \hline

2 & $\PSL(d,q)$                             & Construction~\ref{con:psl}& $3$ or $4$ & Lemma~\ref{lem:sppirSL} \\

4 & $\PSU(3,q)$                             & Construction~\ref{con:psu}&$3$ or $4$ & Lemma~\ref{lem:sppirSU} \\ 

6 & ${\rm Ree}(q)$                          & Construction~\ref{con:ree} &$4$ & Lemma~\ref{lem:reerank}\\ 
\hline

\end{tabular}
\caption{Infinite families of groups for Theorem~\ref{t:PITSP}$(c)$ }\label{t:obj2}
\end{table}

Finally, our third result is the classification of finite properly innately transitive permutation groups of rank $3$.

\begin{maintheorem}\label{t:PIT3}
Let  $G \le {\rm Sym}(\Omega)$ such that $G^\Omega\in\PIT$ with plinth $M$ and $C=\C_G(M)$, and let $\Sigma$ be the set of $C$-orbits in $\Omega$, each of length $r$.
Then $G^{\Omega} \in\PIT_3$ if and only if $M$,  $|\Sigma|$, $r$, and   $G$  are as in Table $\ref{t:allrk3}$.

\end{maintheorem}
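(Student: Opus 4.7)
My plan is to derive Theorem~\ref{t:PIT3} essentially as a rank-filtering of Theorem~\ref{t:PITSP}. First I would invoke Lemma~\ref{lem2}, which (as stated in the discussion preceding Objective~1) shows that for every $G^\Omega\in\PIT_3$, the image $\widehat\varphi(G^\Omega)=(X^\Sigma,R)$ is a special pair in the sense of Definition~\ref{def1}. Combined with Theorem~\ref{t:special}, this gives a complete list of possibilities for $(X^\Sigma,R)$, up to equivalence: the ten lines of Table~\ref{t:allr}. Thus each $G^\Omega\in\PIT_3$ arises, up to permutational isomorphism, as a preimage under $\widehat\varphi$ of some special pair from Table~\ref{t:allr}.

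Next I would apply Theorem~\ref{t:PITSP} to describe, up to permutational isomorphism, all groups $G^\Omega\in\PIT$ with $\widehat\varphi(G^\Omega)\approx(X^\Sigma,R)$. The proof of Theorem~\ref{t:PIT3} then reduces to working case-by-case through parts~(a), (b), (c) of Theorem~\ref{t:PITSP} and keeping precisely those entries with rank~$3$. For parts~(a) and~(b) the ranks are stated explicitly: from~(a) only $i=1$ with $M=A_5$ and $G=(C_3\times A_5).2$ gives rank~$3$ (the case $i=7$ gives rank~$4$); from~(b) only $i=3$ with $M=\PSL(3,2)$, $G=C_2\times\PSL(3,2)$, and $i=8$ with $M={\rm M}_{11}$, $G=C_2\times {\rm M}_{11}$ give rank~$3$ (the remaining cases $i\in\{5,9,10\}$ give rank~$4$).

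For part~(c) the rank of $G$ within the infinite families $i\in\{2,4,6\}$ is controlled by the lemmas cited in Table~\ref{t:obj2}: Lemma~\ref{lem:sppirSL} for the $\PSL(d,q)$ family (Construction~\ref{con:psl}), Lemma~\ref{lem:sppirSU} for the $\PSU(3,q)$ family (Construction~\ref{con:psu}), and Lemma~\ref{lem:reerank} for the ${\rm Ree}(q)$ family (Construction~\ref{con:ree}). The last lemma yields rank $4$ for every valid $q$, so line~6 contributes nothing to $\PIT_3$. For the $\PSL$ and $\PSU$ families, I would substitute the arithmetic constraints from Lines~2 and~4 of Table~\ref{t:allr} (namely $r\mid (q-1)/(d,q-1)$ or $r\mid(q^2-1)/(3,q+1)$, together with $o_r(q_0)=r-1$ and the index conditions on $X$) into the respective rank lemmas, thereby isolating exactly those parameter values for which $G$ has rank~$3$. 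This produces the entries of Table~\ref{t:allrk3} corresponding to lines~2 and~4.

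The main obstacle is the careful extraction of the rank-$3$ subfamilies in case~(c): whether $G$ has rank~$3$ or $4$ depends on a subtle interplay between $q$, the prime $r$, the order $o_r(q_0)$, and the index $|X:X\cap\PGL(d,q)|$ (respectively $|X:X\cap\PGU(3,q)|$), and it is here that Lemmas~\ref{lem:sppirSL} and~\ref{lem:sppirSU} must be applied most attentively. Once this enumeration is complete, Table~\ref{t:allrk3} is obtained by combining the three discrete entries $A_5$, $\PSL(3,2)$, ${\rm M}_{11}$ with the two parametric subfamilies isolated above. The converse direction is then immediate: each entry of Table~\ref{t:allrk3} is realised by an explicit construction (Construction~\ref{con:gen}, \ref{con:psl}, or \ref{con:psu}) and is confirmed to have rank~$3$ by the same rank lemmas, so $G^\Omega\in\PIT_3$ precisely for the listed data.
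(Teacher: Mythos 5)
Your proposal is correct and follows essentially the same route as the paper: the paper proves Theorems~\ref{t:PITSP} and~\ref{t:PIT3} together by going through Table~\ref{t:allr} line by line, determining the preimages up to permutational isomorphism and their ranks (computationally or by hand for the sporadic lines, and via Lemmas~\ref{lem:sppirSL}, \ref{lem:sppirSU}, \ref{lem:reerank} together with Lemma~\ref{lem:sprank} for the infinite families), and then retaining exactly the rank-$3$ cases to assemble Table~\ref{t:allrk3}. Your rank-filtering of Theorem~\ref{t:PITSP}, with the arithmetic conditions of Lines~2 and~4 fed into the $\PSL$/$\PSU$ rank lemmas, is precisely this argument.
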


We note that each of the first three lines of Table $\ref{t:allrk3}$ yields infinitely many groups in $\PIT_3$, see Remark~\ref{rem:infty}.

\begin{table}[h]
\resizebox{\columnwidth}{!}{
\begin{tabular}{|c| p{1.2cm} | p{4.5cm}|p{7cm}|p{5.4cm}|}
\hline
 $M$                                  & $|\Sigma|$              & $r$    &  $G$    &Conditions on $G$          \\ \hline

 $\PSL(d,q)$                                  & $\frac{q^d-1}{q-1}$ & \begin{minipage}[t]{5cm}prime  such that $r |\frac{q - 1}{(d,q - 1)}$,\\ $o_r(q_0)=r-1$ and\\  $(d,r) \ne (2,2)$
\end{minipage}  & $C_r\times \PSL(d,q)\leq G\leq \GammaL(d,q)/\langle\omega^r I\rangle$ &  \begin{minipage}[t]{5.4cm} 
  $|G^\Sigma/(G^\Sigma\cap\PGL(d,q))|=a/j$ \\with $(j,r-1)=1$\end{minipage} \\ 
$\PSL(2,q)$                                  & $q+1$ & $2$   & $C_2\times \PSL(2,q)\leq G\leq \GammaL(2,q)/\langle\omega^2 I\rangle$ &  \begin{minipage}[t]{5.4cm} $q\equiv 1\pmod 4$\\  $G^\Sigma\not\leq \PSigmaL(2,q)$\end{minipage} \\ 
$\PSU(3,q)$                                  & $q^3+1$       &\begin{minipage}[t]{5cm}odd prime such that   \\ $r | q-1$,  $o_r(q_0)=r-1$ 
\end{minipage}& $C_r\times \PSU(d,q)\leq G\leq \GammaU(d,q)/\langle\omega^r I\rangle$   &   \begin{minipage}[t]{5.4cm} $|G^\Sigma/(G^\Sigma\cap\PGU(3,q))|=2a/j$\\ with $(j,r-1)=1$\end{minipage}\\ 

$A_5$                                  & $5$        & $3$     & $\GammaL(2,4)$&  \\  
$\PSL(3,2)$&$7$&$2$&$C_2\times \PSL(3,2)$ &\\

 $\rm{M}_{11}$                                    &$11$   &$2$    &$C_2\times \rm{M}_{11}$ &         \\          
\hline

\end{tabular}}
\caption{$G\in\PIT_3$;  here $q=q_0^a$ with $q_0$ prime}\label{t:allrk3}
\end{table}

We study general properties of properly innately transitive groups in Section \ref{sect:dihedral}, then we classify special pairs in Section \ref{sect:specialpairs},  giving a proof of Theorem \ref{t:special}. In Sections \ref{s:families} and \ref{sect:spree} we construct some infinite families of groups in $\PIT$. Finally, we prove Theorems \ref{t:PITSP} and \ref{t:PIT3} in Section \ref{sect:rank3}.

\section{Finite properly innately transitive permutation groups}\label{sect:dihedral} 

In this section we prove several fundamental results about finite innately transitive permutation groups. 

\begin{notation}\label{not1}
{\rm
Let $\Om$ be a finite set and $G$ be a properly innately transitive permutation group 
on $\Omega$ with plinth $M$, so $G^\Omega\in\PIT$. By \eqref{e:cent} and \eqref{e:plinth}, $C:=\C_G(M)\ne 1$ and $M=T^k$ for some nonabelian simple group $T$ and $k\geq1$. In particular $C\cap M=1$ so 
\begin{equation}\label{e:H}
    H:=\la C, M\ra \cong C\times M.
\end{equation}
Let $\Si$ denote the set of $C$-orbits in $\Om$ (a $G$-invariant partition of $\Omega$), and let 
$\sigma\in\Si$ and $\al\in \sigma$. By \eqref{e:sigmaaction}, $M_\alpha\lhd M_\sigma$ and $M_\sigma/M_\alpha\cong C$. Recall the definition of $\varphi(G)=G^\Sigma$ and $\widehat \varphi (G)=(G^\Sigma, (M_\alpha)^\Sigma)$ from \eqref{e:phi} and \eqref{e:phi2}. 
}
\end{notation}

Our first result establishes the uniqueness claims for $\sigma$ referred to in the first section.

\begin{lemma}\label{lem:uniquesigma}
Using Notation~\ref{not1}, $(M_\sigma)^\sigma$ is transitive, $G_\sigma=M_\sigma G_\alpha$, and $M_\alpha$ is a $G_\sigma$-invariant proper  normal subgroup of $M_\sigma$. Suppose in addition that $G^\Sigma$ is primitive. Then $\sigma$ is the unique element of $\Sigma$ fixed by $M_\alpha$ if and only if $M^\Omega$ is not regular,  and in this case $C=\C_{\Sym(\Omega)}(M)$. 
\end{lemma}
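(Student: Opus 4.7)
The plan is to dispose of the three preliminary claims quickly, and then treat the uniqueness biconditional together with the centraliser identification as a single block, identifying the passage from a $\Omega$-pointwise statement to the $\Sigma$-setwise statement as the main obstacle.

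For the preliminary items, all three trace back to \eqref{e:sigmaaction}: transitivity of $(M_\sigma)^\sigma$ and the normality $M_\alpha\lhd M_\sigma$ are literal content, and $M_\alpha$ is proper in $M_\sigma$ because $M_\sigma/M_\alpha\cong C\neq 1$ by \eqref{e:cent}. For $G_\sigma = M_\sigma G_\alpha$, I would use this transitivity: given $g\in G_\sigma$, the point $\alpha^g\in\sigma$ equals $\alpha^m$ for some $m\in M_\sigma$, so $gm^{-1}\in G_\alpha$. From this decomposition, $G_\sigma$-invariance of $M_\alpha$ drops out, since $M_\sigma$ normalises $M_\alpha$ by normality and $G_\alpha$ normalises $M_\alpha=M\cap G_\alpha$ because it normalises $M\lhd G$ and fixes $\alpha$.

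Now suppose $G^\Sigma$ is primitive. Two structural observations: first, $M\cap C\le Z(M)=1$ (since $M=T^k$ with $T$ nonabelian simple), so $M\to M^\Sigma$ is injective and $M$ acts faithfully on $\Sigma$; second, $M^\Sigma$ inherits primitivity from $G^\Sigma$, since any block system of $M^\Sigma\lhd G^\Sigma$ is $G^\Sigma$-invariant and primitivity collapses it. Hence $(M^\Sigma)_\sigma$ is maximal in $M^\Sigma$. For the backward direction of the biconditional, if $M^\Omega$ is regular then $M_\alpha=1$ fixes every element of $\Sigma$, and $|\Sigma|\geq 2$ by intransitivity of $C$, so $\sigma$ is not unique. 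For the forward direction, assume $M^\Omega$ is not regular; then $R:=(M_\alpha)^\Sigma\neq 1$ by faithfulness. Since $R\lhd (M^\Sigma)_\sigma$ maximal, $N_{M^\Sigma}(R)$ is $(M^\Sigma)_\sigma$ or $M^\Sigma$; the latter would make $R$ a nontrivial normal subgroup of the faithful transitive $M^\Sigma$ contained in a stabiliser, forcing $R=1$. So $N_{M^\Sigma}(R)=(M^\Sigma)_\sigma$, equivalently $N_M(M_\alpha)=M_\sigma$.

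This immediately yields the centraliser identification via the classical formula $|\mathrm{C}_{\mathrm{Sym}(\Omega)}(M)|=|N_M(M_\alpha)/M_\alpha|$ for transitive $M$, which in our situation equals $|M_\sigma/M_\alpha|=|C|$; combined with $C\le \mathrm{C}_{\mathrm{Sym}(\Omega)}(M)$, equality follows. The pointwise statement $\mathrm{Fix}_\Omega(M_\alpha)=\sigma$ is straightforward: any fixed $\beta\in\Omega$ has $M_\beta\ge M_\alpha$ with $|M_\beta|=|M_\alpha|$ by orbit--stabiliser, so $M_\beta=M_\alpha$, placing $\beta\in\alpha^{N_M(M_\alpha)}=\alpha^{M_\sigma}=\sigma$.

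The main obstacle is the setwise uniqueness of $\sigma$ in $\Sigma$. If $\tau\in\Sigma$ is setwise fixed by $M_\alpha$, write $\tau=\sigma^h$ to deduce $M_\alpha^{h^{-1}}\le M_\sigma$; the aim is $M_\alpha^{h^{-1}}=M_\alpha$, for then $h\in N_M(M_\alpha)=M_\sigma$ and $\tau=\sigma$. I expect this last step to follow by identifying $M_\alpha^{h^{-1}}$ as a point stabiliser $M_{\alpha^{h^{-1}}}$ in $M$ and using the sharp kernel characterisation $M_\alpha=\ker(M_\sigma\to(M_\sigma)^\sigma\cong C)$ together with the self-normalising property of $M_\sigma$ (a consequence of primitivity of $M^\Sigma$) to rule out ``exotic'' $M$-conjugates of $M_\alpha$ lying inside $M_\sigma$; this delicate step is the technical heart of the uniqueness argument.
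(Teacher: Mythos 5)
Your preliminary claims, the backward direction, and the centraliser formula are handled essentially as in the paper, but the forward direction has two genuine gaps. The first is the assertion that $M^\Sigma$ inherits primitivity from $G^\Sigma$: your justification (``any block system of $M^\Sigma\lhd G^\Sigma$ is $G^\Sigma$-invariant'') is false, since only the \emph{orbit} partition of a normal subgroup is automatically invariant under the larger group, and in general the transitive minimal normal subgroup of a primitive group is imprimitive (product-action and twisted wreath types, and even almost simple primitive groups in which $M_\sigma$ is non-maximal in $M$). Note that Lemma~\ref{lem:uniquesigma} is proved under Notation~\ref{not1} alone --- no $2$-transitivity or simplicity of $M$ is assumed --- so the later fact that the socle of an almost simple $2$-transitive group is primitive is not available here. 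Consequently your derivation of $N_{M}(M_\alpha)=M_\sigma$, and with it your proof of $C=\C_{\Sym(\Omega)}(M)$, is unsupported as written. The repair is to work in $G$ rather than in $M$, as the paper does: since $C\leq G_\sigma$, primitivity of $G^\Sigma=G/C$ makes $G_\sigma$ maximal in $G$; your first part gives $G_\sigma\leq N_G(M_\alpha)$, and $N_G(M_\alpha)=G$ is impossible when $M_\alpha\neq 1$ (a normal subgroup of the transitive group $G$ contained in a point stabiliser would fix $\Omega$ pointwise), so $N_G(M_\alpha)=G_\sigma$ and hence $N_M(M_\alpha)=M\cap G_\sigma=M_\sigma$.

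The second gap is the step you yourself call the technical heart: that $M_\alpha^{h^{-1}}\leq M_\sigma$ forces $M_\alpha^{h^{-1}}=M_\alpha$. You only conjecture this, and the ingredients you propose (the kernel characterisation of $M_\alpha$ and self-normalisation of $M_\sigma$) do not suffice: in a general transitive action, a subgroup normal in a point stabiliser, even one equal to its own normaliser's ``expected'' size, can be contained in many other point stabilisers. For instance, take $A_5$ acting on the $15$ cosets of a Klein four-subgroup $H$ and $K=\langle (12)(34)\rangle\lhd H$; then $N_{A_5}(K)=H$, yet $K$ fixes three cosets (those inside the overgroup $A_4$), so normaliser information of this shape alone never yields uniqueness --- primitivity must enter at exactly this point. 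Your correct observation that the pointwise fixed set of $M_\alpha$ in $\Omega$ is $\sigma$ does not close the gap either, because a block $\tau$ can be fixed setwise while $M_\alpha$ acts fixed-point-freely on it (its image in the regular group $(M_\tau)^\tau\cong C$ may be nontrivial). The paper concludes precisely here by combining $N_G(M_\alpha)=G_\sigma$, obtained from the maximality of $G_\sigma$ in $G$, with the cited result \cite[Lemma 2.19]{PS}, which converts this into the statement that $\sigma$ is the unique element of $\Sigma$ fixed by $M_\alpha$; some such appeal (or an argument that the fixed-element set of $M_\alpha$ in $\Sigma$ is a block, hence trivial by primitivity) is needed to complete your proof.
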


\begin{proof}
 We note  that $(M_\sigma)^\sigma$ is transitive since $M$ is transitive on $\Om$, and hence  $G_\sigma=M_\sigma G_\alpha$.  By \eqref{e:sigmaaction}, $M_\alpha$ is a proper normal subgroup of $M_\sigma$, and as $M_\alpha$ is also normal in $G_\alpha$, $N_G(M_\alpha)$ contains $G_\sigma =  M_\sigma G_\alpha$. This proves the first assertion.
 
 Suppose now that $G^\Sigma$ is primitive, so $G_\sigma$ is a maximal subgroup of $G$. Then $N_G(M_\alpha)$ is equal to either $G$ or $G_\sigma$.  If $M^\Omega$ is regular then $M_\alpha=1$ and so $M_\alpha\lhd G$ and $M_\alpha$ leaves invariant each element of $\Sigma$. Suppose to the contrary that $M^\Omega$ is not regular, so $M_\alpha\ne 1$.  If $N_G(M_\alpha)=G$ then $M_\alpha$ would fix each point of $\Omega$ which is a contradiction, and hence we conclude that $N_G(M_\alpha)=G_\sigma$. It follows from \cite[Lemma 2.19]{PS} that $\sigma$ is the unique element of $\Sigma$ fixed by $M_\alpha$.  Finally, in this case, we have shown that $N_G(M_\alpha)=G_\sigma$ and hence $N_M(M_\alpha)=M_\sigma$. By definition $C \leq \C_{\Sym(\Omega)}(M)$, and by \cite[Theorem 3.2(i)]{PS}, $\C_{\Sym(\Omega)}(M)\cong N_M(M_\alpha)/M_\alpha = M_\sigma/M_\alpha$, and as in Notation~\ref{not1}, $M_\sigma/M_\alpha\cong C$. It follows that $C=\C_{\Sym(\Omega)}(M)$.
\end{proof}

\subsection{Special pairs from rank $3$ properly innately transitive groups}

In our first result we derive additional properties of $C$ and $G^\Sigma$ in the case where $G^\Omega$ has rank $3$. 

\begin{lemma}\label{lem1}
Using Notation~\ref{not1}, assume also that $G^\Omega\in\PIT_3$. 
The following hold:
\begin{enumerate}
\item[$(a)$] $M_\al$ is a $G_\sigma$-invariant proper nontrivial normal subgroup of $M_\sigma$ and  $M_\sigma/M_\alpha\cong C=C_p^c$, for some prime $p$ and positive integer $c$;

\item[$(b)$] $(G_\sigma)^\sigma$ is an affine $2$-transitive subgroup of $\AGL(c,p)$ with group of translations $(H_\sigma)^\sigma =(M_\sigma)^\sigma =C^\sigma \cong C$;
\item[$(c)$] there exists an isomorphism $\psi:C\rightarrow M_\sigma/M_\alpha$ such that 
\[
H_\al = \{ ( a, b) \mid a\in C,\ b\in M_\sigma,\ (a)\psi = bM_\al \} < C\times M_\sigma=H_\sigma;
\]
\item[$(d)$] $G^\Si \cong G/C$ is an almost simple $2$-transitive group with nonabelian simple socle 
$H^\Si \cong M^\Si\cong M$, and  $G_\al \cong G_\sigma/C$ acts transitively by conjugation on the nontrivial elements of the quotient
$M_\sigma/M_\al\cong H_\sigma/H_\al = (H_\sigma)^\sigma \cong C_p^c$. In particular $|G:H|=|G_\alpha:H_\alpha|$ is divisible by $p^c-1$.
\end{enumerate}
\end{lemma}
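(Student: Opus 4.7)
The plan is to exploit the rank-$3$ hypothesis to pin down the orbits of $G_\alpha$, and then combine this with the facts collected in Notation~\ref{not1} (in particular \eqref{e:sigmaaction}) and Lemma~\ref{lem:uniquesigma} to read off parts (a)--(d). Since $G^\Omega\in\PIT_3$, the stabiliser $G_\alpha$ has exactly the three orbits $\{\alpha\}$, $\sigma\setminus\{\alpha\}$, $\Omega\setminus\sigma$ on $\Omega$ (the third being a union of full $C$-orbits because $G_\alpha$ permutes $\Sigma$). This immediately produces two $2$-transitive actions: $(G_\sigma)^\sigma$ on $\sigma$ and $G^\Sigma$ on $\Sigma$.

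For (a) and (b), I would combine \eqref{e:sigmaaction}, which gives $(M_\sigma)^\sigma\cong M_\sigma/M_\alpha\cong C$, with Burnside's theorem that a regular normal subgroup of a finite $2$-transitive group is elementary abelian: applied to $(M_\sigma)^\sigma\lhd (G_\sigma)^\sigma$ this gives $C\cong C_p^c$ for some prime $p$, and places $(G_\sigma)^\sigma$ inside $\AGL(c,p)$ with translation subgroup $(M_\sigma)^\sigma$. Since $C$ and $M_\sigma$ commute, $C^\sigma$ and $(M_\sigma)^\sigma$ are commuting regular abelian subgroups of $\Sym(\sigma)$; but a regular abelian subgroup of $\Sym(\sigma)$ is self-centralising, so these two subgroups coincide, giving $(H_\sigma)^\sigma=C^\sigma=(M_\sigma)^\sigma$. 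The nontriviality of $M_\alpha$ will be the main obstacle: I plan to argue by contradiction, noting that $M_\alpha=1$ would make $M_\sigma\cong C$ an elementary abelian stabiliser in the $2$-transitive simple group $M^\Sigma$; transitivity of $M_\sigma$ on $\Sigma\setminus\{\sigma\}$ combined with abelianness would force $M_\sigma$ to act regularly there, so $M^\Sigma$ would be sharply $2$-transitive, contradicting the simplicity of $M$ via the Frobenius-kernel structure of finite sharply $2$-transitive groups.

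For (c), I would define $\psi\colon C\to M_\sigma/M_\alpha$ by sending $a\in C$ to the unique coset $bM_\alpha$ whose action on $\sigma$ equals that of $a^{-1}$; this is well-defined and a group isomorphism because the restriction map $M_\sigma\to(M_\sigma)^\sigma$ has kernel $M_\alpha$ (using $M_\alpha\lhd M_\sigma$ from Lemma~\ref{lem:uniquesigma}), composed with the inverse of the regular action of $C^\sigma=(M_\sigma)^\sigma$. A direct check then shows that an element $ab\in C\times M_\sigma=H_\sigma$ fixes $\alpha$ if and only if $b(\alpha)=a^{-1}(\alpha)$, which, by the regular action of $C$ on $\sigma$, is equivalent to $(a)\psi=bM_\alpha$.

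For (d), $G^\Sigma$ is a $2$-transitive quasiprimitive group with nonabelian plinth $M^\Sigma$ by \eqref{e:plinth}; since the socle of a $2$-transitive group is either elementary abelian regular or nonabelian simple (Burnside), and an elementary abelian socle cannot contain the nonabelian minimal normal subgroup $M^\Sigma$, we must have $\Soc(G^\Sigma)=M^\Sigma\cong T$ simple and $G^\Sigma$ almost simple (so $k=1$ and $M=T$). The $G_\alpha$-equivariant evaluation bijection $C^\sigma\setminus\{1\}\to\sigma\setminus\{\alpha\}$, $c\mapsto c(\alpha)$, transports the transitive rank-$3$ action of $G_\alpha$ on $\sigma\setminus\{\alpha\}$ into the transitive conjugation action of $G_\alpha$ on the $p^c-1$ nontrivial elements of $C^\sigma\cong M_\sigma/M_\alpha$. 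Finally, $|G:H|=|G_\alpha:H_\alpha|$ by transitivity of $H$ on $\Omega$, and since $H\leq\C_G(C)$ the subgroup $H_\alpha$ lies in the kernel of the conjugation action, so orbit-stabiliser applied to the $(p^c-1)$-orbit gives $(p^c-1)\mid|G:H|$.
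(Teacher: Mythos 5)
Most of your argument runs parallel to the paper's proof and is sound: the rank-$3$ hypothesis gives the three $G_\alpha$-orbits and hence the two $2$-transitive actions, Burnside applied to the regular normal subgroup $(M_\sigma)^\sigma\lhd (G_\sigma)^\sigma$ gives $C\cong C_p^c$, the self-centralising property of a transitive abelian group yields $(H_\sigma)^\sigma=C^\sigma=(M_\sigma)^\sigma$ (the paper cites \cite[Theorem 3.2]{PS} for the same fact), your explicit construction of $\psi$ replaces the paper's use of Goursat's Lemma harmlessly, and the equivariant bijection $c\mapsto \alpha^c$ legitimately transfers transitivity on $\sigma\setminus\{\alpha\}$ to the conjugation action, giving the divisibility statement in $(d)$.

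The genuine gap is in your proof that $M_\alpha\neq 1$, which is in fact the hardest point of the lemma. You assume that in the hypothetical case $M_\alpha=1$ the abelian stabiliser $M_\sigma$ is transitive on $\Sigma\setminus\{\sigma\}$, i.e.\ that the socle $M^\Sigma$ is itself $2$-transitive. Only $G^\Sigma$ has been shown to be $2$-transitive, and $2$-transitivity does not pass to normal subgroups or socles: for instance $\PSL(2,8)\lhd \PGammaL(2,8)$ in degree $28$ (which appears later in this very paper as Line 7 of Table~\ref{t:allr}) is transitive but not $2$-transitive, its point stabiliser $D_{18}$ having three orbits of length $9$ outside the fixed point. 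All one knows a priori is that the $M_\sigma$-orbits on $\Sigma\setminus\{\sigma\}$ have equal length, being permuted by $G_\sigma$; nothing forces a single orbit, so your route through sharp $2$-transitivity does not get off the ground without further argument. The paper circumvents this by using only primitivity of $M^\Sigma$ (via \cite[Corollary 2.15 and Proposition 3.20]{PS}), hence maximality of $M_\sigma$ in $M$: for $\sigma'\neq\sigma$ the two distinct abelian maximal subgroups $M_\sigma$ and $M_{\sigma'}$ generate $M$ and both normalise $L=M_{\sigma,\sigma'}$, so $L\lhd M$ forces $L=1$, making $M$ a Frobenius group and contradicting simplicity. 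You would need either to supply this kind of argument, or to prove transitivity of $M_\sigma$ on $\Sigma\setminus\{\sigma\}$ in the abelian-stabiliser case, which is not available by any elementary observation.
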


\begin{proof}
$(a)$ By Lemma~\ref{lem:uniquesigma}, $M_\al$ is a $G_\sigma$-invariant proper normal subgroup of $M_\sigma$. By \eqref{e:cent}, $C$ is semiregular on $\Om$.  In particular, $C^\sigma\cong C$ is regular.
Since $G$ has rank $3$ on $\Om$ and $\sigma$ is a block of imprimitivity for $G$ in $\Omega$, the $G_\al$-orbits are $\{\al\}, \sigma\setminus \{\al\}, \Om\setminus \sigma$. This means firstly that $(G_\sigma)^\sigma$ is $2$-transitive with a regular normal subgroup $C^\sigma$, and it follows that $C^\sigma\cong C$ is elementary abelian (see \cite[Lemma 3.19]{PS}), so $M_\sigma/M_\alpha\cong C=C_p^c$, for some prime $p$ and positive integer $c$. Thus  $(G_\sigma)^\sigma$ is an affine $2$-transitive subgroup of $\AGL(c,p)$. Secondly the transitivity of $G_\alpha$ on $ \Om\setminus \sigma$ implies that $G_\sigma$ is transitive on $\Sigma\setminus\{\sigma\}$ and hence $G^\Sigma$ is $2$-transitive. Then it follows from the discussion in Section~\ref{sect:intro}, especially \eqref{e:plinth}, that $M^\Sigma\cong M$ is the socle  of the $2$-transitive group $G^\Sigma$ and $M$ is nonabelian. Hence $M$ is a nonabelian simple group by a  theorem of Burnside (see \cite[Exercise 12.4]{WIE} or \cite[Theorem 3.21]{PS}). To complete the proof of part (a) we need to show that $M_\alpha$ is nontrivial.  Suppose to the contrary that $M_\alpha=1$. Then $M^\Sigma$ is the socle of the almost simple $2$-transitive permutation group $G^\Sigma$, with abelian point stabiliser $(M^\Sigma)_\sigma\cong M_\sigma \cong M_\sigma/M_\alpha\cong C_p^c$. 
Note that $M^\Sigma\cong M$ is primitive, and hence $M_\sigma$ is a maximal subgroup of $M$, see \cite[Corollary 2.15 and Proposition 3.20]{PS}. Let $\sigma'\in\Sigma\setminus\{\sigma\}$, and let $L = M_{\sigma,\sigma'}$. The subgroup $M_\sigma$ does not fix $\sigma'$ by \cite[Corollary 2.21]{PS}, and hence $M_\sigma\ne M_{\sigma'}$. In particular, $\langle M_{\sigma}, M_{\sigma'}\rangle = M$ since $M_\sigma$ is maximal in $M$. On the other hand each of the abelian subgroups $M_\sigma$ and $M_{\sigma'}$ normalises $L$, and hence $L$ is normalised by $\langle M_{\sigma}, M_{\sigma'}\rangle = M$. Since $M$ acts faithfully on $\Sigma$ this implies that $L=1$, and since this holds for all $\sigma'\ne \sigma$ it follows that $M$ is a Frobenius group. Thus $M$ has a nontrivial nilpotent normal subgroup, by \cite[Theorem 3.16]{PS}, and this is a contradiction since $M$ is a nonabelian simple group.
Thus $M_\alpha\ne1$ and part (a) is proved. 

$(b)$  By the previous paragraph,  $H_\sigma=C\times M_\sigma$, and so $(M_\sigma)^\sigma$ centralises $C^\sigma$.  Since $\C^\sigma$ is abelian and transitive, it follows from \cite[Theorem 3.2]{PS} that $(H_\sigma)^\sigma =(M_\sigma)^\sigma =C^\sigma $, and part $(b)$ is proved.  

$(c)$ Since each of $C$ and $M_\sigma$ is transitive on $\sigma$, we have $H_\sigma = CH_\al=M_\sigma H_\al$, and it follows that $H_\al$ is a subdirect subgroup of the direct product $H_\sigma=C\times M_\sigma$. Also $H_\al\cap C=1$ (since $C$ is semiregular) and $H_\al\cap M_\sigma = M_\al$.  
By Goursat's Lemma \cite[Theorem 4.8]{PS}, there exists an isomorphism $\psi: C\rightarrow  M_\sigma/M_\al$ such that $H_\al = \{ (a, b) \mid a\in C, b\in M_\sigma, (a)\psi = bM_\al \}$, proving part $(c)$.

$(d)$ As $C\lhd G$, $\Si$ is a $G$-invariant partition of $\Omega$, and by \cite[Lemma 4.3 and Proposition 5.3]{BP}, $G^\Sigma\cong G/C$ (the kernel of the $G$-action on $\Si$ is precisely $C$). We showed in  the proof of part (a) that  $M^\Sigma\cong M$ is the nonabelian simple socle of the $2$-transitive group $G^\Sigma$, and we have $H^\Sigma = H/C=MC/C\cong M$. Further, as noted in the previous paragraph,  
$H_\sigma=M_\sigma H_\alpha$. By part $(b)$, $(H_\sigma)^\sigma$ is regular, so the groups $H_\alpha, M_\alpha$ are the kernels of the actions of $H_\sigma, M_\sigma$ on $\sigma$, respectively. Thus $M_\sigma/M_\al\cong H_\sigma/H_\al = (H_\sigma)^\sigma \cong C_p^c$.
Since $C$ is regular and faithful on $\sigma$, we have $G_\sigma=G_\alpha C$ and $G_\alpha\cap C=1$. Hence $G_\sigma/C \cong G_\alpha/(G_\alpha\cap C)\cong G_\alpha$. Finally, $G_\sigma = M_\sigma G_\alpha$ (by Lemma~\ref{lem:uniquesigma}) and the $2$-transitive group $(G_\sigma)^\sigma=G_\sigma/G_{(\sigma)}$ contains $(M_\sigma G_{(\sigma)}/G_{(\sigma)})\cong M_\sigma/M_\alpha$ as a regular normal subgroup. Thus, by \cite[Theorem 11.2]{WIE} we may identify $\sigma$ with the group $M_\sigma/M_\alpha$ in such a way that $M_\sigma/M_\alpha$ acts by right multiplication, $\alpha$ corresponds to the identity element of $M_\sigma/M_\alpha$, and $G_\alpha/G_{(\sigma)}$ acts by conjugation. Since, as we noted above, $G_\alpha$ is transitive on $\sigma\setminus\{\alpha\}$, it follows that the conjugation action induced by $G_\alpha$ on $M_\sigma/M_\alpha$ is transitive on the nontrivial elements of $M_\sigma/M_\alpha$. Now $H_\alpha$ acts trivially in this action (since $C$ centralises $M$, and $M_\sigma/M_\alpha$ is abelian), and hence $p^c-1$ divides $|G_\alpha:H_\alpha|$. Since $H$ is transitive we have $G=HG_\alpha$, and hence $|G_\alpha:H_\alpha|= |G:H|$. This proves part $(d)$. 
 \end{proof} 

The properties discussed in Lemma~\ref{lem1} enable us to  show that a rank $3$ group in $\PIT$ gives rise to a special pair, which we define formally as follows.

\begin{definition}\label{def1}
{\rm 
Let $X\leq {\rm Sym}(\Sigma)$, let $\sigma\in\Sigma$ and let $R\leq X_\sigma$. We say that $(X^\Sigma,R)$ is a \emph{special pair} if the following properties all hold.
\begin{enumerate}
    \item[$(a)$] $X$ is $2$-transitive on $\Sigma$ with a nonabelian simple socle $M^\Sigma$;
    \item[$(b)$] $R$ is a proper, nontrivial, $X_\sigma$-invariant, normal subgroup of the stabiliser $(M^\Sigma)_\sigma$, and $(M^\Sigma)_\sigma/R=C_p^c$, for some prime $p$ and integer $c\geq1$;
    \item[$(c)$] the induced conjugation action of $X_\sigma$ on $(M^\Sigma)_\sigma/R$ is transitive on the $p^c-1$ nontrivial elements of $(M^\Sigma)_\sigma/R$.
\end{enumerate}
}
\end{definition}

Note that condition $(c)$ of Definition~\ref{def1} is trivially satisfied if $p^c=2$.
We use Lemma~\ref{lem1} to show that each group in $\PIT_3$ determines a special pair under the map $\widehat\varphi$ of \eqref{e:phi2}.

\begin{lemma}\label{lem2}
Use the notation of Lemma~$\ref{lem1}$, and assume that $G^\Omega\in\PIT_3$. Let $\sigma\in\Sigma$ and let $R:= CM_\alpha/C = (M_\alpha)^\Sigma$. Then
\begin{enumerate}
    \item[$(a)$] $G^\Sigma$ is an almost simple $2$-transitive group with nonabelian simple socle $H^\Sigma=M^\Sigma\cong M$.
    
    \item[$(b)$] $R$ is a $(G^\Sigma)_\sigma$-invariant proper nontrivial normal subgroup of $(H^\Sigma)_\sigma = (C\times M_\sigma)/C$,  and the induced conjugation action of  $(G^\Sigma)_\sigma$ on $(H^\Sigma)_\sigma/R\cong M_\sigma/M_\alpha\cong C_p^c$ is transitive on the nontrivial elements of $(H^\Sigma)_\sigma/R$;
    
    \item[$(c)$] $(G^\Sigma,R)$ is a special pair, and $\widehat\varphi(G^\Omega) = (G^\Sigma,R)$.
\end{enumerate}
\end{lemma}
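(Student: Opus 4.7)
The proof is essentially an assembly of the facts already established in Lemma~\ref{lem1}, translated from the action on $\Omega$ to the induced action on $\Sigma$, so the main task is bookkeeping rather than fresh argument. My plan is to first verify that the natural map $M \to M^\Sigma$ is an isomorphism (so that facts about $M_\sigma, M_\alpha$ inside $M$ transport verbatim to $(H^\Sigma)_\sigma$ and $R$), then read off each of the three conclusions in turn.

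For part $(a)$, I would simply quote Lemma~\ref{lem1}$(d)$, which states that $G^\Sigma \cong G/C$ is almost simple and $2$-transitive with nonabelian simple socle $H^\Sigma \cong M^\Sigma \cong M$. Before going further, I would record once and for all that $M\cap C=1$: indeed $M\cap C\le Z(M)$, and $Z(M)=1$ since $M$ is the nonabelian simple group furnished by Lemma~\ref{lem1}$(a)$. Since $H=C\times M$ by \eqref{e:H}, it follows that $H^\Sigma=H/C\cong M$ and that the natural map $M_\sigma\to (H^\Sigma)_\sigma=CM_\sigma/C$ is an isomorphism carrying $M_\alpha$ to $R=CM_\alpha/C$.

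For part $(b)$, by Lemma~\ref{lem1}$(a)$ and Lemma~\ref{lem:uniquesigma}, $M_\alpha$ is a $G_\sigma$-invariant proper nontrivial normal subgroup of $M_\sigma$. Since $C\lhd G$, the subgroup $CM_\alpha$ is normal in $G_\sigma$ and contains $C$ properly contained in $CM_\sigma=H_\sigma$, so its image $R$ in $G^\Sigma$ is a proper, nontrivial, $(G^\Sigma)_\sigma$-invariant normal subgroup of $(H^\Sigma)_\sigma$. Via the isomorphism $(H^\Sigma)_\sigma/R\cong M_\sigma/M_\alpha$ established above, Lemma~\ref{lem1}$(a)$ gives the quotient structure $C_p^c$. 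Finally, the transitivity statement is transcribed directly from Lemma~\ref{lem1}$(d)$: $G_\alpha\cong G_\sigma/C=(G^\Sigma)_\sigma$ acts by conjugation transitively on the nontrivial elements of $M_\sigma/M_\alpha$, and this action coincides with the conjugation action of $(G^\Sigma)_\sigma$ on $(H^\Sigma)_\sigma/R$.

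For part $(c)$, I would just check off the three clauses of Definition~\ref{def1} against what has been proved: clause $(a)$ is part $(a)$ of the lemma; clause $(b)$ and clause $(c)$ together are part $(b)$. The equality $\widehat\varphi(G^\Omega)=(G^\Sigma,R)$ is immediate from the definition \eqref{e:phi2} of $\widehat\varphi$ and our choice $R=(M_\alpha)^\Sigma$. There is no genuine obstacle in this proof; the only place where one must be a little careful is in justifying that the conjugation actions on $M_\sigma/M_\alpha$ and on $(H^\Sigma)_\sigma/R$ agree, which reduces to checking that the isomorphism $(H^\Sigma)_\sigma\cong M_\sigma$ is equivariant under conjugation by $(G^\Sigma)_\sigma\cong G_\alpha$, and this is immediate because $C$ centralises both $M$ and itself.
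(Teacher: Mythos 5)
Your proposal is correct and follows essentially the same route as the paper's proof: part $(a)$ is read off from Lemma~\ref{lem1}$(d)$, parts $(b)$ and $(c)$ come from Lemma~\ref{lem1}$(a)$,$(d)$ together with the identification $(H^\Sigma)_\sigma/R\cong M_\sigma/M_\alpha$ and the observation that the $(G^\Sigma)_\sigma$-conjugation action transported through this identification agrees with the $G_\alpha$-action on $M_\sigma/M_\alpha$ because $C\lhd G$ centralises $M$. The only cosmetic difference is that you realise the identification via the isomorphism $M_\sigma\to CM_\sigma/C$ (using $C\cap M=1$), whereas the paper uses the isomorphism theorems and coset representatives $xC$ with $x\in M_\sigma$; the content is identical.
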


\begin{proof}
Part $(a)$ follows from Lemma~\ref{lem1}$(d)$. By definition, $R=CM_\alpha/C$, and it follows from Lemma~\ref{lem1}$(a)$ that $R$ is a $(G^\Sigma)_\sigma$-invariant  proper nontrivial  normal subgroup of $CM_\sigma/C=H_\sigma/C=(H^\Sigma)_\sigma$, and that $M_\sigma/M_\alpha\cong C_p^c$. Now 
\[
(H^\Sigma)_\sigma/R = (H_\sigma/C)/(CM_\alpha/C)
\cong H_\sigma/(CM_\alpha) = (CM_\sigma)/(CM_\alpha)\cong M_\sigma/M_\alpha\cong C_p^c.
\]
Also $G_\sigma = CG_\alpha$ (as $C^\sigma$ is transitive), and so $(G^\Sigma)_\sigma = (G_\alpha)^\Sigma$. Each $R$-coset in $(H^\Sigma)_\sigma=H_\sigma/C$ has a representative of the form $xC$ with $x\in M_\sigma$ (since $H_\sigma = C\times M_\sigma$), and an element $g\in G_\alpha$ conjugates this coset to the $R$-coset with representative $x^gC$. Thus the conjugation action of $G_\alpha$ on $(H^\Sigma)_\sigma/R$ is equivalent to its action on $M_\sigma/M_\alpha$, and by Lemma~\ref{lem1}$(d)$ the latter action is transitive on non-identity elements. Thus part $(b)$ is proved. It follows from Definition~\ref{def1} and part $(a)$ that $(G^\Sigma,R)$ is a special pair, and from \eqref{e:phi2} that $\widehat\varphi(G^\Omega) = (G^\Sigma,R)$.
\end{proof}

We record, for later use, a property of groups $G^\Omega\in\PIT$ which map to special pairs.

\begin{lemma}\label{lem2b}
Using Notation~\ref{not1}, assume that $G^\Omega\in\PIT$ is such that $\widehat\varphi(G^\Omega) = (G^\Sigma,R)$ is a special pair, where $R:= CM_\alpha/C = (M_\alpha)^\Sigma$ and $\alpha\in\sigma\in\Sigma$. Then $G_\sigma^\sigma$ is $2$-transitive.
\end{lemma}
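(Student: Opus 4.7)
The plan is to show that $(G_\sigma)^\sigma$ has $(M_\sigma)^\sigma$ as a regular abelian normal subgroup and that the point stabiliser of $\alpha$ in $(G_\sigma)^\sigma$ acts transitively by conjugation on the nontrivial elements of $(M_\sigma)^\sigma$; this gives $2$-transitivity by the standard criterion for transitive groups with a regular normal subgroup.

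First I would use \eqref{e:sigmaaction} to note that $(M_\sigma)^\sigma\cong M_\sigma/M_\alpha\cong C$ is regular on $\sigma$, and Lemma~\ref{lem:uniquesigma} to get $G_\sigma=M_\sigma G_\alpha$ with $(G_\sigma)^\sigma$ transitive. Since $M\lhd G$ and $G_\sigma$ stabilises $\sigma$ setwise, conjugation shows $M_\sigma=M\cap G_\sigma\lhd G_\sigma$, so $(M_\sigma)^\sigma\lhd (G_\sigma)^\sigma$. Using the regular action of $(M_\sigma)^\sigma$, I identify $\sigma$ with $M_\sigma/M_\alpha$ by sending $\alpha^m$ to $mM_\alpha$ (with $\alpha\leftrightarrow M_\alpha$). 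A short direct check then shows that for $g\in G_\alpha$, the induced action on $\sigma$ corresponds to the conjugation action $mM_\alpha\mapsto g^{-1}mg\,M_\alpha$ on $M_\sigma/M_\alpha$ (well-defined because $M_\alpha\lhd M_\sigma$ and $g^{-1}mg\in M\cap G_\sigma=M_\sigma$). Consequently, $2$-transitivity of $(G_\sigma)^\sigma$ is equivalent to $G_\alpha$ acting transitively by conjugation on the nontrivial elements of $M_\sigma/M_\alpha$.

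To produce this transitivity I would transfer the special pair hypothesis from $(M^\Sigma)_\sigma/R$ to $M_\sigma/M_\alpha$. The map $mM_\alpha\mapsto (mC)R$ is a well-defined group isomorphism $M_\sigma/M_\alpha\to (M^\Sigma)_\sigma/R$ (well-defined and injective because $M_\alpha C\cap M=M_\alpha$, using $C\cap M=1$ from Notation~\ref{not1}), and it intertwines conjugation by $G_\sigma$ with conjugation by $G_\sigma/C=(G^\Sigma)_\sigma$, since $C$ centralises $M$ and so acts trivially on $M_\sigma/M_\alpha$ by conjugation. By Definition~\ref{def1}(c), $(G^\Sigma)_\sigma$ acts transitively on the nontrivial elements of $(M^\Sigma)_\sigma/R$, and hence $G_\sigma$ acts transitively by conjugation on the nontrivial elements of $M_\sigma/M_\alpha$. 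Finally, because $M_\sigma/M_\alpha\cong C$ is abelian, $M_\sigma$ acts trivially on it by conjugation, and the factorisation $G_\sigma=M_\sigma G_\alpha$ then forces $G_\alpha$ to already be transitive on the nontrivial elements, completing the proof. The only mildly delicate step is keeping straight the two identifications (of $\sigma$ with $M_\sigma/M_\alpha$, and of $M_\sigma/M_\alpha$ with $(M^\Sigma)_\sigma/R$), but this is routine and poses no real obstacle.
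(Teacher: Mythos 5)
Your proof is correct and takes essentially the same route as the paper's: identify $\sigma$ with the regular abelian normal subgroup $(M_\sigma)^\sigma\cong M_\sigma/M_\alpha\cong (M^\Sigma)_\sigma/R$ so that the stabiliser of $\alpha$ acts by conjugation, and then invoke the special pair condition of Definition~\ref{def1}$(c)$ to get transitivity on the nontrivial elements. The paper simply outsources the identification details to the proof of Lemma~\ref{lem1}$(d)$, whereas you spell them out (including the reduction from $G_\sigma$ to $G_\alpha$ via $G_\sigma=M_\sigma G_\alpha$ and the triviality of the $M_\sigma$- and $C$-actions), which is fine.
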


\begin{proof}
 By Definition~\ref{def1}, the induced conjugation action of $G_\sigma$ on $(M^\Sigma)_\sigma/R\cong  M_\sigma/M_\alpha$ is transitive on the $|\sigma|-1$ nontrivial cosets. As explained in the proof of Lemma~\ref{lem1}$(d)$, $(M^\Sigma)_\sigma/R$ is a regular normal subgroup of $(G_\sigma)^\sigma$, and this $G_\sigma$-conjugation action is equivalent to the $G_\sigma$-action on $\sigma\setminus\{\alpha\}$. It follows that $G_\sigma^\sigma$ is 2-transitive.
\end{proof}

\subsection{Properly innately transitive groups from special pairs}

We next study special pairs $(X,R)$ to understand which groups $G^\Omega\in\PIT$ map to them under $\widehat\varphi$, up to equivalence. 
(Recall the notion of equivalence given in Definition~\ref{def:equiv}.) We begin with a basic construction which in almost all cases produces examples of such groups.

\begin{construction}\label{con:gen}
Suppose that $(X^\Sigma,R^\Sigma)$ is a special pair, where $X$ is an almost simple $2$-transitive group on a set $\Sigma$ with nonabelian simple socle $M=\Soc(X)$ and, for some $\sigma\in\Sigma$, $1\ne R\lhd M_\sigma$ with $R$ invariant under $X_\sigma$ and $M_\sigma/R\cong C_p^c$, for some prime $p$ and integer $c$. 
Let $\Omega=\{Rx\mid x\in M\}$ with $M$ acting by right multiplication, and let $C:=\C_{\Sym(\Omega)}(M)$. Then both $N:=N_{\Sym(\Omega)}(M)$, and its subgroup $H=C\times M$ lie in $\PIT$ and  preserve the partition of $\Omega$ formed by the set of $C$-orbits.
\end{construction}

\begin{lemma}\label{lem3}
Using the notation of Construction $\ref{con:gen}$, the group  $C\cong C_p^c$, and 
the following hold.
\begin{enumerate}
    \item[$(a)$] $H\leq N\leq \Sym(\Omega)$, the set of $C$-orbits in $\Omega$ is $N$-invariant,  $H^\Omega, N^\Omega\in\PIT$ both with plinth $M^\Omega \cong M$, and $\widehat\varphi(H^\Omega)\approx (M^\Sigma,R^\Sigma)$.
    \item[$(b)$] Suppose that
       $G_0\leq \Sym(\Omega_0)$ satisfies $G_0^{\Omega_0}\in\PIT$ with plinth $M_0$, and let $N_0:=N_{\Sym(\Omega_0)}(M_0)$, $C_0:=\C_{\Sym(\Omega_0)}(M_0)$,  $\Sigma_0$ be the set of $C_0$-orbits in $\Omega_0$, and $\alpha_0\in\Omega_0$. Then $\widehat{\varphi}(G_0^{\Omega_0})=(G_0^{\Sigma_0},R_0^{\Sigma_0})$ where  $R_0=(M_0)_{\alpha_0}$. Suppose also that  $\widehat{\varphi}(G_0^{\Omega_0})\approx (X^\Sigma,R^\Sigma)$ (with $\approx$ as in Definition~\ref{def:equiv}).  Then  there exists a permutational isomorphism $(f,\psi)$ from $N_0^{\Omega_0}$ to $N^\Omega$ such that 
       \begin{enumerate}
           \item[(i)] $M_0^f=M$, $C_0^f=C$, $\alpha_0^\psi=R\in\Omega$, and  $G:=G_0^f$ satisfies $C\times M\leq G\leq N= N_{\Sym(\Omega)}(M)$; and moreover
    
        \item[(ii)] 
 $G^\Omega$ and $G_0^{\Omega_0}$ are permutationally isomorphic, and  $\widehat{\varphi}(G^{\Omega})\approx (X^\Sigma,R^\Sigma)$.
       \end{enumerate}
\end{enumerate}
\end{lemma}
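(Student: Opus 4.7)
For part (a), the plan is to identify $C$ explicitly via the coset-action structure, then to read off the plinth/centraliser structure of $H$ and $N$ from it. Since $\Omega = M/R$ with $M$ acting by right multiplication, the standard computation of centralisers of coset actions gives $C = \C_{\Sym(\Omega)}(M) \cong N_M(R)/R$, acting by left multiplication, and the $C$-orbits on $\Omega$ are the fibres of the natural projection $M/R \to M/N_M(R)$. Because $(X^\Sigma, R^\Sigma)$ is a special pair, $M^\Sigma$ is $2$-transitive, so $M_\sigma$ is a maximal subgroup of the simple group $M$; combined with $1 \ne R \lhd M_\sigma$ (so $R$ is not normal in $M$), this forces $N_M(R) = M_\sigma$. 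Hence $C \cong M_\sigma/R \cong C_p^c$, and the set $\Sigma'$ of $C$-orbits in $\Omega$ is $M$-equivariantly identified with $\Sigma$. Since $M$ is nonabelian simple, $C \cap M \le Z(M) = 1$, so $H = \la C, M\ra = C \times M$. Both $H$ and $N$ are transitive on $\Omega$ through $M$, contain $M$ as a plinth (being simple and transitive), and contain $C$ as a nontrivial semiregular intransitive normal subgroup (intransitive because $|C| = |\Omega|/|\Sigma| < |\Omega|$); hence $H^\Omega$ and $N^\Omega$ both lie in $\PIT$. The partition of $\Omega$ into $C$-orbits is $N$-invariant since $C \lhd N$. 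Finally, $\alpha := R \in \Omega$ has stabiliser $M_\alpha = R$, and the identification $\Sigma' \cong \Sigma$ sends $(M_\alpha)^{\Sigma'}$ to $R^\Sigma$, yielding $\widehat\varphi(H^\Omega) \approx (M^\Sigma, R^\Sigma)$.

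For part (b), the plan is to translate the abstract equivalence $\widehat\varphi(G_0^{\Omega_0}) \approx (X^\Sigma, R^\Sigma)$ into a permutational isomorphism landing in the concrete setting of part (a). The equivalence supplies an isomorphism $G_0^{\Sigma_0} \to X$ sending the unique nonabelian socle $M_0^{\Sigma_0}$ to $M^\Sigma$ and $R_0^{\Sigma_0}$ to $R^\Sigma$; using $M_0 \cong M_0^{\Sigma_0}$ and $M \cong M^\Sigma$ (each plinth meets its centraliser trivially), this descends to an abstract isomorphism $\phi : M_0 \to M$ with $\phi(R_0) = R$. Applying Lemma~\ref{lem:uniquesigma} --- with $G_0^{\Sigma_0}$ primitive (being $2$-transitive) and $M_0^{\Omega_0}$ not regular (as $R_0 \ne 1$) --- yields $C_0 = \C_{G_0}(M_0)$, so $C_0 \times M_0 \le G_0$. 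Since $M_0$ acts transitively on $\Omega_0$ with stabiliser $R_0$, $M$ acts transitively on $\Omega$ with stabiliser $R$, and $\phi(R_0) = R$, there is a unique bijection $\psi : \Omega_0 \to \Omega$ with $\alpha_0^\psi = R$ satisfying $(\beta^m)^\psi = \beta^\psi \cdot \phi(m)$ for all $\beta \in \Omega_0$ and $m \in M_0$. Letting $F : \Sym(\Omega_0) \to \Sym(\Omega)$ be conjugation by $\psi$, we get $F(M_0) = M$, and because $F$ preserves centralisers and normalisers in the symmetric group, $F(C_0) = C$ and $F(N_0) = N$. Setting $G := F(G_0)$ yields $C \times M \le G \le N$ and a permutational isomorphism $(F, \psi)$ from $G_0^{\Omega_0}$ to $G^\Omega$, and the final equivalence $\widehat\varphi(G^\Omega) \approx (X^\Sigma, R^\Sigma)$ follows by transitivity of $\approx$.

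The main technical point is constructing $\psi$ in part (b): lifting the $\Sigma$-level equivalence to a compatible permutational isomorphism at the $\Omega$-level. This succeeds precisely because the equivalence of special pairs simultaneously encodes the abstract isomorphism $\phi : M_0 \to M$ \emph{and} the matching $\phi(R_0) = R$ of point stabilisers; with these in hand, $\psi$ is determined uniquely by $\alpha_0 \mapsto R$, and the identification of $C_0$ and $N_0$ with $C$ and $N$ under $F$ is automatic from their intrinsic characterisation in $\Sym(\Omega)$.
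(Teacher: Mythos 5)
Your proposal follows essentially the same route as the paper's proof: in (a) you pin down $N_M(R)=M_\sigma$ via maximality of $M_\sigma$, identify $C\cong M_\sigma/R$ and the $C$-orbits with $\Sigma$, and in (b) you restrict the $\Sigma$-level equivalence to the socle to get $\phi\colon M_0\to M$ with $\phi(R_0)=R$, build the coset bijection $\psi$, and transport everything by conjugation, exactly as in the paper's Claims 1 and 2 (your appeal to ``conjugation preserves centralisers and normalisers'' replaces the paper's explicit element computation, and your use of Lemma~\ref{lem:uniquesigma} to get $C_0=\C_{G_0}(M_0)$ is if anything more careful than the paper's appeal to \eqref{e:cent}). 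One step is misstated, though: from the special pair you claim ``$M^\Sigma$ is $2$-transitive''; this is false in general --- in Line 7 of Table~\ref{t:allr} the socle $\PSL(2,8)$ of degree $28$ has point stabiliser $D_{18}$ and is not $2$-transitive. What you actually need, and what is true, is that $M^\Sigma$ is \emph{primitive} (so $M_\sigma$ is maximal in $M$), which follows from Burnside's theorem on socles of $2$-transitive groups, as the paper cites via \cite[Corollary 2.15 and Proposition 3.20]{PS}; with that substitution your argument for $N_M(R)=M_\sigma$ stands (your simplicity argument ruling out $N_M(R)=M$ is fine). Finally, the closing appeal to transitivity of $\approx$ tacitly uses $\widehat\varphi(G^\Omega)\approx\widehat\varphi(G_0^{\Omega_0})$, i.e.\ that the $\Omega$-level permutational isomorphism descends to the $C$-orbit partitions; this is the ``tedious but straightforward'' verification the paper writes out, and it does go through because you have already established $F(C_0)=C$ and $\alpha_0^\psi=R$, but it should be acknowledged as a step rather than folded into transitivity.
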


Note that, by Lemma~\ref{lem:uniquesigma}, $C=\C_G(M)=\C_{\Sym(\Omega)}(M)$ since $R\ne1$.

\begin{proof}
First we prove the assertions about $C$.   By definition $M^\Omega$ is transitive with $R$ the stabiliser of the trivial coset $\alpha=R\in\Omega$. Also, by the definition of a special pair, $N_M(R)$ contains $M_\sigma$, as $R\lhd M_\sigma$. Since $M^\Sigma\cong M$ is the socle of the almost simple $2$-transitive group $X^\Sigma$, it follows from \cite[Corollary 2.15 and Proposition 3.20]{PS} that $M^\Sigma$ 
is primitive and $M_\sigma$ is a maximal subgroup of $M$. Hence  $N_M(R)=M_\sigma$ or $M$. However if $N_M(R)=M$ then $R\lhd M$ and as $R$ fixes the point $\sigma\in\Sigma$ and $M$ is faithful on $\Sigma$, we conclude that $R=1$, contradicting Definition~\ref{def1}. Thus $N_M(R)=M_\sigma$. It follows from \cite[Theorem 3.2(i)]{PS} that $C=\C_{\Sym(\Omega)}(M) \cong M_\sigma/R\cong C_p^c$. 

\medskip
$(a)$ Since $C\cap M \lhd M$, $M$ is a nonabelian simple group and $C$ is abelian, we have  $C\cap M=1$, and hence $H=C\times M$ and $H\leq N=N_{\Sym(\Omega)}(M)$. Since $M$ is a nonabelian simple group it follows that $M^\Omega\cong M$ is a transitive minimal normal subgroup of both $H^\Omega=C\times M$ and $N^\Omega$, and hence $H^\Omega, N^\Omega\in\PIT$ both with plinth $M^\Omega$. 
 Now 
by definition, $C\lhd N$, and so  the set $\Sigma'$ of $C$-orbits in $\Omega$ forms an $N$-invariant partition of $\Omega$. By \cite[Theorem 3.2(ii)]{PS}, the $C$-orbit containing $\alpha:=R\in\Omega$ is equal to $\{ Rx\mid x\in N_M(R)\}$. Since $N_M(R)=M_\sigma$, the setwise stabiliser in $M$ of this $C$-orbit is $M_\sigma$, and hence both $M^{\Sigma'}$ and $M^\Sigma$ are permutationally isomorphic to the action by right multiplication of $M$ on $\{M_\sigma x\mid x\in M\}$. Further, it follows from \cite[Lemma 2.8]{PS} that there exists a bijection $\psi':\Sigma'\to\Sigma$ such that $\alpha^C\to \sigma$ and $({\rm id}, \psi')$ is a permutational isomorphism from $M^{\Sigma'}$ to $M^\Sigma$ which sends $R^{\Sigma'}$ to $R^\Sigma$. Now, since $C$ acts trivially on $\Sigma'$, we have $H^{\Sigma'}=M^{\Sigma'}$, and so by \eqref{e:phi2}, $\widehat\varphi(H^\Omega)= (M^{\Sigma'}, R^{\Sigma'})$, which, by Definition~\ref{def:equiv} is equivalent to $(M^\Sigma,R^\Sigma)$. This proves part $(a)$.

\medskip

$(b)$ 
By \eqref{e:cent},
$C_0$ is a nontrivial intransitive normal subgroup of $G_0^{\Omega_0}$. Let  $\sigma_0\in\Sigma_0$ such that $\alpha_0\in\sigma_0$. By \eqref{e:sigmaaction}, $R_0 =(M_0)_{\alpha_0}\lhd (M_0)_{\sigma_0}$ and $((M_0)_{\sigma_0})^{\sigma_0} = (M_0)_{\sigma_0}/R_0\cong C_0$, and by \cite[Lemma 4.3 and Proposition 5.3]{BP}, $G_0^{\Sigma_0}=G_0/C_0$ is quasiprimitive with socle $M_0^{\Sigma_0}=M_0C_0/C_0\cong M_0$.
Further, by \eqref{e:phi2}, $\widehat{\varphi}(G_0^{\Omega_0})=(G_0^{\Sigma_0},R_0^{\Sigma_0})$ and hence $(G_0^{\Sigma_0},R_0^{\Sigma_0})\approx (X^\Sigma,R^\Sigma)$. Thus by Definition~\ref{def:equiv}, there is an isomorphism $f_0:G_0^{\Sigma_0}\to X$ and a bijection $\psi_0:\Sigma_0\to \Sigma$ such that $\psi_0:\sigma_0\to\sigma$, $f_0:R_0\to R$, and $(f_0,\psi_0)$ is a permutational isomorphism from $G_0^{\Sigma_0}$ to $X^\Sigma$.  Then  $M^\Sigma=\Soc(X)^\Sigma= (M_0^{\Sigma_0})^{f_0}$, and $(M_\sigma)^\Sigma= (((M_0)_{\sigma_0})^{\Sigma_0})^{f_0}$. Hence, identifying $M_0$ with $M_0^{\Sigma_0}$, the restriction $f_0|_{M_0}$ is an isomorphism $M_0\to M$ such that $(M_0)_{\sigma_0}\to M_\sigma$ and $R_0\to R$. 

By \cite[Lemma 2.8]{PS}, the map $\alpha_0^x\to R_0x$ (for $x\in M_0$) is a bijection from $\Omega_0$ to the set $[M_0:R_0]=\{ R_0x\mid x\in M_0\}$ of right cosets of $R_0$ in $M_0$ such that $M_0^{\Omega_0}$ is permutationally isomorphic to the right multiplication action of $M_0$ on $[M_0:R_0]$. We will therefore identify $\Omega_0$ with $[M_0:R_0]$ so that $\alpha_0=R_0$, and we note that $\Omega=[M:R]$ (see  Construction~\ref{con:gen}). Next we define $\psi:\Omega_0\to \Omega$ by $\psi:R_0 x\to R (x^{f_0})$ (for $x\in M_0$). Then $\psi$ is a bijection such that $\alpha_0^\psi=R$ and, for all $x, y\in M_0$,
\[
((R_0 x)^{\psi}){y^{f_0}} =(R x^{f_0})
y^{f_0} = R (x^{f_0}\,{y^{f_0}}) = 
R((xy)^{f_0}) = (R_0(xy))^{\psi} = ((R_0 x){y})^{\psi}
\]
and hence $(f_0|_{M_0}, \psi)$ is a permutational isomorphism from $M_0^{\Omega_0}$ to $M^{\Omega}$. 
We now extend this permutational isomorphism. First we define a map $f:\Sym(\Omega_0)\to\Sym(\Omega)$ such that, for $y\in\Sym(\Omega_0)$, the image $y^f$ is defined by 
\[
\omega^{y^f} := ((\omega^{\psi^{-1}})^y)^\psi, \quad \mbox{for all $\omega\in\Omega$.} 
\]
The map $f$ is a group homomorphism since, for all $x,y\in\Sym(\Omega_0)$ and $\omega\in\Omega$,
\[
\omega^{(xy)^f} = ((\omega^{\psi^{-1}})^{xy})^\psi
=(((\omega^{\psi^{-1}})^x)^{y})^\psi
=(((\omega^{x^f})^{\psi^{-1}})^y)^\psi
=(\omega^{x^f})^{y^f}
\]
and hence $(xy)^f=x^fy^f$. It is tedious but straightforward to check that $f$ is one-to-one, and hence $f$ is a group isomorphism. 

\medskip\noindent{\textit{Claim 1:} $f$ extends the map $f_0|_{M_0}$, that is to say, $f|_{M_0}=f_0|_{M_0}$, and in particular $R_0^f=R$.}

 We prove this using the definition of $\psi$. Each point $\omega\in\Omega$ is of the form $\omega=(R_0 x)^\psi = Rx^{f_0}$ for some $x\in M_0$, and for each $y\in M_0$, the action of $y^f$ on $\omega$ satisfies
\[
\omega^{y^f} = ((\omega^{\psi^{-1}})^y)^\psi
= ((R_0 x)^y)^\psi = (R_0 xy)^\psi = R(xy)^{f_0} = (R x^{f_0})y^{f_0} = \omega^{y^{f_0}}.
\]
Since this holds for all $\omega\in\Omega$ it follows  that $y^{f_0}=y^f$, and, since this holds for all $y\in M_0$, we have $f|_{M_0}=f_0|_{M_0}$. Then $R_0^f=R_0^{f_0}=R$, and  Claim 1 is proved.

\medskip\noindent{\textit{Claim 2:} $f$ maps $N_0=N_{\Sym(\Omega_0)}(M_0)$ to $N=N_{\Sym(\Omega)}(M)$, and $C_0$ to $C$.}

Let $y\in N_0$. Each element of $M$ is of the form $x^{f_0}$ for some unique
element $x\in M_0$. Note that
\[
\begin{array}{ccll}
    (x^{f_0})^{y^f} &=& (x^f)^{y^f} &\mbox{as $f_0=f|_{M_0}$}    \\
    &=& (x^y)^f &\mbox{as $f$ is a homomorphism}    \\
    &=& (x^y)^{f_0} \in M &\mbox{as $x^y\in M_0$ and $f_0=f|_{M_0}$.}    \\
\end{array} 
\]
Since this holds for all $x^{f_0}\in M$ it follows that $y^f\in N$, and hence $N_0^{f}\leq N$. A similar proof yields the reverse inclusion and we conclude that $N_0^f=N$. A similar proof shows that $C_0^f=C$, proving Claim 2.

Thus $(f|_{N_0},\psi)$ is a permutational isomorphism from $N_0^{\Omega_0}$ to $N^\Omega$, and we now view $f$ as an isomorphism $f:N_0\to N$ (that is, we replace $f$ by $f|_{N_0}$). By Claims 1 and 2, $M_0^f=M$ and $C_0^f=C$, and by definition $\alpha_0^\psi = R\in\Omega$. Hence $\psi$ maps $C_0$-orbits in $\Omega_0$ to $C$-orbits in $\Omega$, and in particular $\sigma_0^\psi$ is the $C$-orbit containing $\alpha_0^\psi = R$, that is, $\sigma_0^\psi=\sigma$.   Let $G:= G_0^f$. Then since $G_0\leq N_0$ (since $M_0\lhd G_0$) it follows that $(f|_{G_0},\psi)$ is a permutational isomorphism from $G_0^{\Omega_0}$ to $G^\Omega$, and since $C_0\times M_0\leq G_0\leq N_0$ we have $C\times M\leq G\leq N$. 

Now since $C_0^f=C$, the map $f$ induces a group isomorphism $\overline{f}:G_0/C_0\to G/C$ given by $\overline{f}:C_0 x\to C x^f$. Also since $\psi$ maps $C_0$-orbits in $\Omega_0$ to $C$-orbits in $\Omega$, $\psi$ induces a bijection $\overline{\psi}:\Sigma_0 \to \Sigma'$ given by $\overline{\psi}:\omega^{C_0} \to (\omega^\psi)^C$. (This map is well defined since, for $x\in C_0$, we have from the definition of $f$ that 
\[
(\omega^\psi)^{x^f}=(((\omega^\psi)^{\psi^{-1}})^x)^\psi = (\omega^x)^\psi 
\]
so $\omega^\psi, (\omega^x)^\psi$ lie in the same $C$-orbit.) It is tedious but straightforward to show that $(\overline{f}, \overline{\psi})$ is a permutational isomorphism from $G_0^{\Sigma_0}$ to $G^{\Sigma'}$. Moreover, $\sigma_0^{\overline{\psi}}=\sigma$ and
$(R_0^{\Sigma_0})^{\overline{f}} = (R_0^f)^{\Sigma'}$, and by Claim 1,  $R_0^f=R$. Hence, $(R_0^{\Sigma_0})^{\overline{f}} = R^{\Sigma'}$, and so by Definition~\ref{def:equiv}, 
$\widehat{\varphi}(G_0^{\Omega_0}) = (G_0^{\Sigma_0}, R_0^{\Sigma_0})\approx (G^{\Sigma'}, R^{\Sigma'}) = \widehat{\varphi}(G^{\Omega})$. By assumption, $\widehat{\varphi}(G_0^{\Omega_0})\approx (X^\Sigma,R^\Sigma)$, and hence $\widehat{\varphi}(G^{\Omega}) \approx (X^\Sigma,R^\Sigma)$.
\end{proof}

\begin{corollary}\label{lem:uniq}
Let $G \le \Sym(\Omega)$ such that $G^{\Omega} \in \PIT$, and let $M$, $C$ and $\Sigma$ be as in Notation~\ref{not1}. Also let $R=M_{\alpha}$ for some $\alpha \in \Omega$, and suppose that $(G^{\Sigma},R^{\Sigma})$ is a special pair and $N_{\Sym(\Sigma)}(M^{\Sigma})=M^{\Sigma}$.  Then $N_{\Sym(\Omega)}(M)=G=C\times M$.
\end{corollary}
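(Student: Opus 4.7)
The plan is to establish the chain $C\times M\leq G\leq N_{\Sym(\Omega)}(M)$ and then to prove $N_{\Sym(\Omega)}(M)=C\times M$; the corollary follows at once. For the chain, by \eqref{e:H} we have $C\times M = H \leq G$, and $G\leq N_{\Sym(\Omega)}(M)$ since $M\lhd G$. Before going further, I would apply Lemma~\ref{lem:uniquesigma} to strengthen $C=\C_G(M)$ to $C=\C_{\Sym(\Omega)}(M)$: the special pair hypothesis makes $G^\Sigma$ $2$-transitive, hence primitive; the socle $M^\Sigma$ is nonabelian simple (Definition~\ref{def1}(a)), forcing $M$ to act faithfully on $\Sigma$; and Definition~\ref{def1}(b) asserts $R^\Sigma\ne 1$, so $R=M_\alpha\ne 1$ and $M^\Omega$ is not regular.

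Setting $N:=N_{\Sym(\Omega)}(M)$, the normality $C\lhd N$ (as $C$ is the full centraliser of $M$ in $\Sym(\Omega)$) implies that $N$ permutes the $C$-orbit partition $\Sigma$, inducing $N^\Sigma\leq\Sym(\Sigma)$. I would then note that $N^\Sigma$ normalises $M^\Sigma$, so the hypothesis $N_{\Sym(\Sigma)}(M^\Sigma)=M^\Sigma$ forces $N^\Sigma\leq M^\Sigma$; with the obvious reverse inclusion this gives $N^\Sigma=M^\Sigma\cong M$.

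The heart of the argument is to identify the kernel $K$ of the map $N\to N^\Sigma$ as $K=C$. The inclusion $C\leq K$ is immediate. Conversely, for $k\in K$ and $m\in M$, the conjugate $k^{-1}mk\in M$ induces the same permutation on $\Sigma$ as $m$ does (because $k^\Sigma=1$), so $m^{-1}(k^{-1}mk)\in M\cap K$; faithfulness of $M$ on $\Sigma$ yields $M\cap K=1$, so $k$ centralises every $m\in M$ and hence $k\in\C_{\Sym(\Omega)}(M)=C$. Counting orders then gives $|N|=|C|\cdot|M^\Sigma|=|C\times M|$, and since $C\times M\leq N$ we conclude $N=C\times M$. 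The sandwich from the first paragraph then yields $G=C\times M=N_{\Sym(\Omega)}(M)$. The only step that is more than a formality is the identification $K=C$, which I expect to be the main (though modest) obstacle, but it falls out cleanly from the faithful action of the nonabelian simple group $M$ on $\Sigma$.
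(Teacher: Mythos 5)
Your proof is correct and takes essentially the same route as the paper, which likewise bounds $N_{\Sym(\Omega)}(M)^\Sigma$ by $N_{\Sym(\Sigma)}(M^\Sigma)=M^\Sigma$ and identifies the kernel of the action on $\Sigma$ with $C$ to conclude $N_{\Sym(\Omega)}(M)=G=C\times M$; you merely spell out details (the appeal to Lemma~\ref{lem:uniquesigma} and the kernel computation) that the paper's one-line proof leaves implicit. One small point: faithfulness of $M$ on $\Sigma$ follows from $M\cap C=1$ together with the fact (from \cite{BP}, quoted in Section~\ref{sect:intro}) that the kernel of the $G$-action on $\Sigma$ is exactly $C$, rather than from simplicity of $M^\Sigma$ alone, since a priori a plinth $T^k$ with $k\geq 2$ could have simple image with nontrivial kernel.
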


\begin{proof}

Since $M^{\Sigma} \le G^{\Sigma} \le N_{\Sym(\Sigma)}(M^{\Sigma})=M^{\Sigma}$, $N_{\Sym(\Omega)}(M)/C \cong N_{\Sym(\Omega)}(M)^{\Sigma} \le N_{\Sym(\Sigma)}(M^{\Sigma})$ and $G^{\Sigma}\cong G/C$, it follows that $N_{\Sym(\Omega)}(M)^{\Sigma}= G^\Sigma=M^\Sigma$ and $N_{\Sym(\Omega)}(M)=G=C \times M$, proving the statement.
\end{proof}

\section{Classification of special pairs} \label{sect:specialpairs}

In this section we first prove Theorem~\ref{t:special}, classifying all special pairs as defined in Definition~\ref{def1}, and then deduce several consequences which will prove useful for achieving {\bf Objectives 2} and {\bf 3}.  

\subsection{Proof of Theorem~\ref{t:special}}
Suppose that $X, M, \Sigma, \sigma$ are as in the statement of Theorem~\ref{t:special},  and that $(X^\Sigma,R)$ is a special pair as in Definition~\ref{def1} 
with $R\lhd M_\sigma$. Let $n=|\Sigma|$ and $r= p^c=|M_\sigma/R|$. Then $X^\Sigma$ is a finite almost simple $2$-transitive permutation group, and the proof uses the classification of such groups. We consider each family or sporadic example separately, using the list in \cite[pp. 196--197]{Cam99}.

\smallskip\noindent {\em Case: alternating:}\quad  Here $M=A_n$ with $n\geq 5$. Since $M_\sigma=A_{n-1}$ has a nontrivial elementary abelian quotient it follows that $n=5$, $r=3$, $R$ is unique, and condition Definition~\ref{def1}$(c)$ 
forces $X=S_5$. So Line 1 of Table~$\ref{t:allr}$ holds, and all conditions of Definition~\ref{def1} hold for this line.

\smallskip
\noindent {\em Case:\ linear:}\quad 
Here  $M=\PSL(d,q)$ of degree  $n=(q^d-1)/(q-1)$, with $d\geq 2$ and $(d,q)\ne(2,2)$ or $(2,3)$, and we may take $\Sigma$ to be the set of $1$-spaces of the vector space $V(d,q)=\mathbb{F}_q^d$ of $d$-dimensional row vectors. 

First we determine $r$.
If $d=2$, so that $q\geq4$,  then the stabiliser $M_\sigma=[q] \rtimes C_{(q-1)/(2,q-1)}$, and  an elementary abelian quotient of $M_\sigma$ must be an elementary abelian quotient of $C_{(q-1)/(2,q-1)}$. Thus $r$ is a prime divisor of $(q-1)/(2,q-1)$ and there is a unique $R$ for each such prime $r$.
Suppose now that $d\geq3$. Considering the (possibly unfaithful) action of $\widehat{M}=\SL(d,q)$ on $\Sigma$ we have 
$\widehat{M}_\sigma=[q^{d-1}] \rtimes \GL(d-1,q)$, which contains a subgroup $Z_0$ of scalars of order $(d,q-1)$, and $M_\sigma = \widehat{M}_\sigma/Z_0$. If $\PSL(d-1,q)$ is a nonabelian simple group then the largest abelian quotient of $M_\sigma$ is cyclic of order $(q-1)/(d,q-1)$, and so $r$ is a prime divisor of $(q-1)/(d,q-1)$ and there is a unique $R$ for each such prime $r$.   
If $\PSL(d-1,q)$ is not a nonabelian simple group then $d=3$ and $q=2$ or $q=3$. If $q=2$ then $M_\sigma\cong S_4$ and the unique possibility is $r=2$ (with unique $R$), while if $q=3$ then $M_\sigma=[3^2] \rtimes 2S_4$ (in notation of $\mathbb{ATLAS}$ \cite{Atlas}) and again the only possibility is $r=2$ (with unique $R$). Thus in all cases either $r$ is as in Line 2 of Table~$\ref{t:allr}$, or $d=3, q=2$, and $r$ is as in Line 3 of Table~$\ref{t:allr}$. 
We note that, for the cases $d=3$ with $q=2$ or $q=3$, all the conditions of Definition~\ref{def1} hold and Line 3 or 2 holds, respectively. Thus we may assume that if $d=3$ then $q\geq4$.  

Let $q=q_0^a$ with $q_0$ prime, and let $\omega\in \mathbb{F}_q$ be such that $\langle \omega \rangle= \mathbb{F}_q^*$. Since $X\leq \PGaL(d,q)$, the index $|X/(X\cap\PGL(d,q))|$ divides $a$, so let  $j:=a/|X/(X\cap\PGL(d,q))|$. Then  $X_\sigma$ is generated by $X_\sigma\cap\PGL(d,q)$ and an element $\phi^{j}x$ for some $x\in\PGL(d,q)_\sigma$, where $\phi$ is the Frobenius automorphism induced by the mapping $\phi: (a_{ij})\to (a_{ij}^{q_0})$ for matrices $(a_{ij})\in\GL(d,q)$.
Suppose first that $r=2$. Then $q$ is odd so that $o_r(q_0)=r-1$, and also $(j,r-1)=1$. Since condition $(c)$ of Definition \ref{def1} is trivial, it follows that all conditions for Line 2 of Table~$\ref{t:allr}$ hold, and  also all conditions of Definition~\ref{def1} hold. Thus we may assume further that $r$ is odd.

If $d=2$ then $M_\sigma=[q] \rtimes C_{(q-1)/(2,q-1)}$ and its unique subgroup $R$ of index $r$ is $R=[q] \rtimes C_{(q-1)/r(2,q-1)}$. We now derive the form of $R$ when $d\geq3$. 
Take $\sigma=\langle e_1\rangle$ and let $Z_0$ be the subgroup of scalar matrices in $\widehat{M}=\SL(d,q)$, as above, so that $M=\PSL(d,q)=\widehat{M}/Z_0$ and $Z_0\cong C_{(d,q-1)}$. Then
$$
\widehat{M}_\sigma=\left\{
\begin{pmatrix}
\omega^i & 0 \\
v& g
\end{pmatrix}\mid g \in \GL(d-1,q),v\in \mathbb{F}_q^{d-1},\ {\rm det}(g)=\omega^{-i}, 0\leq i< q-1
\right\}
$$
and $\widehat{M}_\sigma$ has derived subgroup
$$
(\widehat{M}_\sigma)'=\left\{
\begin{pmatrix}
1 & 0 \\
v& g
\end{pmatrix}\mid g \in \SL(d-1,q),v\in \mathbb{F}_q^{d-1}
\right\}
$$
such that $\widehat{M}_\sigma/(\widehat{M}_\sigma)'\cong C_{q-1}$. (Recall that here $d\geq3$ and if $d=3$ then $q\geq4$, so $\PSL(d-1,q)$ is a nonabelian simple group.) It follows that $\widehat{M}_\sigma$  has a unique normal subgroup of index $r$, namely the subgroup $\widehat{R}$ given by
\begin{equation}\label{e:Rhat}
\widehat{R}=\left\{
\begin{pmatrix}
\omega^{ri} & 0 \\
v& g
\end{pmatrix}\mid g \in \GL(d-1,q),v\in \mathbb{F}_q^{d-1},\ {\rm det}(g)=\omega^{-ri}, 0\leq i< (q-1)/r
\right\}.
\end{equation}
Since $Z_0\cong C_{(d,q-1)}$, we have
$$
Z_0=\left\{
\begin{pmatrix}
\omega^{i} & 0 \\
0& \omega^i I_{d-1}
\end{pmatrix}\mid  (q-1)/(d,q-1)\ \mbox{divides $i$}
\right\},
$$
and, since $r$ divides $(q-1)/(d,q-1)$, it follows that $Z_0\leq \widehat{R}$. We conclude that $R:=\widehat{R}/Z_0$ is the unique normal subgroup of $\widehat{M}_\sigma/Z_0$ of index $r$, and we have $\widehat{M}_\sigma/\widehat{R}\cong  (\widehat{M}_\sigma/Z_0)/(\widehat{R}/Z_0) \cong M_\sigma/R$. At this point we note that, if $d=2$, then $\widehat{M}_\sigma$ and $\widehat{R}$ have the same form as above and  $\widehat{M}_\sigma/\widehat{R}\cong  M_\sigma/R$. 

Recall that $X_\sigma$ is generated by $X_\sigma\cap\PGL(d,q)$ and $\phi^{j}x$ for some $x\in\PGL(d,q)_\sigma$, where $j\mid a$. Now the induced conjugation action of $X_\sigma$ on $M_\sigma/R$ is transitive on the set of  $r-1$ nontrivial cosets, and this action is equivalent to the induced action on $\widehat{M}_\sigma/\widehat{R}$ which, in turn, is equivalent to the action of $\langle \phi^j\rangle$ on $\langle \omega\rangle/\langle \omega^r\rangle$. Since $\phi^j: \omega \langle \omega^r\rangle \to \omega^{q_0^j} \langle \omega^r\rangle$, the transitivity property holds if and only if the least positive integer $\ell$ such that $\omega^{q_0^{j\ell}-1}\in  \langle \omega^r\rangle$ is $\ell=r-1$. This holds if and only if the order $o_r(q_0^j)$ of $q_0^j$ modulo $r$ is $r-1$. Since $o_r(q_0^j)$ divides $o_r(q_0)$, which divides $r-1$ this implies that $o_r(q_0)=r-1$. Moreover the order $o_r(q_0^j)=r-1$ if and only if $(j,r-1)=1$, and the transitivity condition implies that $r-1$ divides $|X_\sigma:(X_\sigma\cap \PGL(d,q))| = |X:(X\cap \PGL(d,q))|= a/j$. Thus all conditions of Line 2 of Table~\ref{t:allr} hold for this unique subgroup $R$, and if these conditions all hold then we have shown that the conditions of Definition~\ref{def1} hold. Thus the theorem is proved in the linear case.

\smallskip
\noindent {\em Case:\ unitary:}\quad Here  $M=\PSU(3,q)$ of degree  $n=q^3+1$, with $q\geq 3$, acting on totally isotropic $1$-spaces of the natural module $V= (\mathbb{F}_{q^2})^3$. The stabiliser $M_\sigma=Q\rtimes L$ where $Q$ is regular on $\Sigma\setminus\{\sigma\}$ of order $q^3$ and $L$ is cyclic of order $(q^2-1)/(3,q+1)$. The only elementary abelian quotients of $M_\sigma$ are quotients of $L$. Thus $r$ is a prime dividing $(q^2-1)/(3,q+1)$ (and $R$ is unique for such $r$).
 If $r=2$ then 
Line 4 of Table~$\ref{t:allr}$ holds since condition $(c)$ of Definition \ref{def1} is vacuous and all listed conditions (except the divisibility condition on $r$) are vacuous as $2\mid (q^2-1)/(3,q+1)$ implies that $q$ is odd.

Assume now that $r$ is odd. 
Let $q=q_0^a$ with $q_0$ prime, and $\omega$ a field element such that $\langle \omega \rangle= \mathbb{F}_{q^2}^*$. We write $\overline{x}=x^q$ for $x\in \mathbb{F}_{q^2}^*$.  Let $(\cdot, \cdot)$ be the non-degenerate unitary form on $V$ preserved by $\SU(3,q)$. Let  $\{e, x, f\}$ be a basis of $V$ such that
\begin{equation}\label{e:defUnform}
(e,e)=(f,f)=(x,e)=(x,f)=0, \text{ } (x,x)=(e,f)=1.
\end{equation}
We consider the (possibly unfaithful) action of $\widehat{M}=\SU(3,q)$ on $\Sigma$ so that, writing matrices with respect to the ordered basis $e, x, f$,
\begin{equation} \label{e:defSU}
\widehat{M}=\SU(3,q)=\{ A \in \SL(3,q^2) \mid AP\overline{A}^T=P\} \text{ where } P=\begin{pmatrix}
0 & 0&1 \\0&1&0\\1&0&0
\end{pmatrix}.
\end{equation}
Now $M=\PSU(3,q)=\widehat{M}/Z_0$, where $Z_0$ is the subgroup of scalar matrices in $\SU(3,q)$, and $|Z_0|=(3,q+1)$. Let $\sigma=\langle e\rangle$ (which is a totally isotropic $1$-space).  Then $\widehat{M}_\sigma=Q\rtimes L$ where 
$$
Q=\left\{ 
\begin{pmatrix}
1 & 0&0 \\ u&1&0\\s&-\bar{u}&1
\end{pmatrix} \mid u,s\in \mathbb{F}_{q^2}, s+\overline{s}+u\overline{u}=0 
\right\}
$$
and 
$$
L=\langle h\rangle \text{ where }  h = 
\begin{pmatrix}
\omega & 0&0 \\ 0&\omega^{q-1}&0\\0&0&\omega^{-q}
\end{pmatrix}. 
$$
We note that $Z_0<L$, $|L|=q^2-1$, and the only elementary abelian quotients of $\widehat{M}_\sigma$ are (isomorphic to) quotients of $L$. Hence the only elementary abelian quotients of $M_\sigma = \widehat{M}_\sigma/Z_0$ are (isomorphic to)  quotients of $L/Z_0$. Therefore $r$ is a prime dividing $(q^2-1)/(3,q+1)$, and the unique subgroup $R=\widehat{R}/Z_0$, where 
$\widehat{R}=Q\rtimes \langle h^r\rangle$.

Since $X\leq \PGaU(3,q)$, the index $|X/(X\cap\PGU(3,q))|$ divides $2a$, so let  $j:=2a/|X/(X\cap\PGU(3,q))|$. Then  $X_\sigma$ is generated by $X_\sigma\cap\PGU(3,q)$ and $\phi^{j}g$ for some $g\in\PGU(3,q)_\sigma$, where $\phi$ is the Frobenius automorphism induced by the mapping $\phi: (a_{ij})\to (a_{ij}^{q_0})$ for matrices $(a_{ij})\in\GU(3,q)$.  Note that $M_\sigma/R\cong \widehat{M}_\sigma/\widehat{R}=\{\widehat{R}h^i\mid 0\leq i< r-1\}$. The action of $X_\sigma$ on $M_\sigma/R$ induced by conjugation is equivalent to its action on $\widehat{M}_\sigma/\widehat{R}$. Moreover, $X_\sigma\cap\PGU(3,q)$ acts trivially, and $\phi^{j}g$ maps $\widehat{R}h^i\to \widehat{R}h^{iq_0^j}$.
This $X_\sigma$-action is transitive on the nontrivial $\widehat{R}$-cosets if and only if the least positive integer $\ell$ such that $h^{q_0^{j\ell}-1}\in  \widehat{R}$ is $\ell=r-1$. Since $h^{q_0^{j\ell}-1}\in  \widehat{R}$ if and only if $q_0^{j\ell}-1$ is divisible by $r$, the transitivity condition is equivalent to the order $o_r(q_0^j)$ of $q_0^j$ modulo $r$ being equal to $r-1$. Since $o_r(q_0^j)$ divides $o_r(q_0)$, which divides $r-1$ this implies that $o_r(q_0)=r-1$. Moreover the order $o_r(q_0^j)=r-1$ if and only if $(j,r-1)=1$, and the transitivity condition implies that $r-1$ divides $|X_\sigma/(X_\sigma\cap\PGU(3,q))|=|X/(X\cap\PGU(3,q))|
=2a/j$.  Thus all conditions of Line 4 of Table~\ref{t:allr} hold for this unique subgroup $R$, and if these conditions all hold then we have shown that the conditions of Definition~\ref{def1} hold. Thus the theorem is proved in the unitary case.

\smallskip
\noindent {\em Case:\ symplectic:}\quad Here  $M=\Sp(2d,2)$ of degree  $n=2^{2d-1} +\varepsilon 2^{d-1}$, with $\varepsilon = \pm$ and $d\geq 3$. The stabiliser $M_\sigma= O^\varepsilon(2d,2)$, and the only nontrivial abelian quotient of $M_\sigma$ is $C_2$ (with unique $R$), so  Line 5 of Table~$\ref{t:allr}$ holds. (Conditions $(a)$ and $(b)$ of Definition~\ref{def1} both hold while condition $(c)$  is vacuous.)

\smallskip
\noindent {\em Case:\ Ree or Suzuki:}\quad Here  $M={\rm Ree}(q)'$ of degree  $n=q^3+1$, with $q=3^{2a+1}\geq3$ or  $M={\rm Sz}(q)$ of degree  $n=q^2+1$, with $q=2^{2a+1}>2$. We first deal with the case  
$X={\rm Ree}(3)\cong {\rm P}\Gamma{\rm L}(2,8)$ of degree $|\Sigma|=28$, and $M={\rm Ree}(3)'\cong \PSL(2,8)$. The stabiliser $X_\sigma = C_9\rtimes C_6$ and $M_\sigma =  C_9\rtimes C_2 = D_{18}$ (see \cite[p.\,6]{Atlas}). Thus the only proper, $X_\sigma$-invariant, normal subgroup $R$ of $M_\sigma$, such that condition $(c)$ of Definition \ref{def1} holds is the subgroup  $R=C_9$ of index $r=p^c=2$. Therefore, up to equivalence, the only special pair arising from this exceptional case is $(X,R)$ with $R\cong C_9$ of index $2$ in $M_\sigma$, and  Line 7 of Table~$\ref{t:allr}$ holds.  (Conditions $(a)$ and $(b)$ of Definition~\ref{def1} both hold while condition $(c)$  is vacuous.)

Now assume  that  $q>3$ for the Ree groups so that $M={\rm Ree}(q)'= {\rm Ree}(q)$ or $M={\rm Sz}(q)$.
Then the outer automorphism group of $M$ has odd order $2a+1$ in both cases. Thus the group $X_\sigma$ cannot act transitively on the non-trivial cosets of $M_\sigma/R\cong C_r$ if $r$ is odd, and therefore $r$ is even.
The stabiliser $M_\sigma=[q^{1+1+1}] \rtimes C_{q-1}$ or $[q^{1+1}] \rtimes C_{q-1}$ respectively, so the only nontrivial abelian quotients of $M_\sigma$ are quotients of $C_{q-1}$.
Thus $r$ is a prime dividing $q-1$. Since $r$ is even, we must have $r=2$, $M={\rm Ree}(q)$, and $R$ is the unique index $2$ subgroup of $M_\sigma$. Hence Line 6 of Table~$\ref{t:allr}$ holds (Conditions $(a)$ and $(b)$ of Definition~\ref{def1} both hold while condition $(c)$  is vacuous).

\smallskip
\noindent {\em Case:\ sporadic:}\quad First let $M=M_n$ with $n\in\{11,12,22,23,24\}$. Then $M_\sigma=M_{n-1}$, and, since $M_\sigma$ must have a nontrivial abelian quotient, it follows that $n=11$ and the quotient is $C_2$ (with $R$ the unique index $2$ subgroup of $M_{10}=A_6.2$),  so  Line 8 of Table~$\ref{t:allr}$ holds. The remaining sporadic $2$-transitive groups  $M$ are $\PSL(2,11)$, $\rm{M}_{11}, A_7, \rm{HS}$, and $\rm{Co}_3$, of degrees $11, 12, 15, 176, 276$,  and with point stabilisers $M_\sigma=A_5$, $\PSL(2,11), \PSL(3,2), \PSU(3,5):2, \rm{McL}:2$, respectively. 
Since $M_\sigma$ must have a nontrivial abelian quotient it follows that $M=\rm{HS}$ or $\rm{Co}_3$ and the quotient is $C_2$ (with unique $R$), and  Line 9 or 10 of Table~$\ref{t:allr}$ holds (and we note that all conditions of Definition~\ref{def1} hold for all these cases).  

This completes the proof of Theorem~\ref{t:special}.

\bigskip
In Remark~\ref{re:Table1}, we give further details about the comments made after the statement of Theorem~\ref{t:allr}, in particular concerning  Lines $2$ and $4$ of Table~\ref{t:allr}.

\begin{remark}\label{re:Table1}
{\rm The comments below relate to Lines in Table~\ref{t:allr}, especially Lines~$2$ and $4$, which involve divisibility conditions on $r$ and $r-1$.  
\begin{enumerate}[(i)]
     \item  If $r=2$, then all conditions in Table \ref{t:allr} are vacuously satisfied, except for the divisibility condition on $r$ in Lines 2 and 4,  which forces $q=q_0^a$ to be odd in these Lines. 
    \item The divisibility conditions on $r$ imply that $q>2$ in  Lines $2$ and $4$ (and also that $q>3$ when $d$ is even in Line $2$).  
     \item In both  Lines $2$ and $4$, the divisibility condition on $r-1$ follows from the other conditions. We included this condition in Table \ref{t:allr} for clarity and for the implications given in (iv,v) below. We state the argument for Line $2$ (Line $4$ is similar). The divisibility condition on $r$ implies that $r\mid (q_0^a-1)$, that is, $q_0^a\equiv 1\pmod r$, and  so $o_r(q_0)$ divides $a=\frac{a}{j}\cdot j$. Since $o_r(q_0)=r-1$ and $(j,r-1)=1$ this implies that $(r-1)\mid (a/j)$.

\item  In Line $2$, if $r$ is odd then the  condition 
$(r-1)\mid (a/j)$ implies that $j<a$, and hence that $q=q_0^a$ is not prime.

\item In Line $4$, if $r$ is odd then  the  condition  $(j,r-1)=1$ implies that $j$ is odd (since $r-1$ is even) and hence $j\mid a$. Then the condition $(r-1)\mid 2(a/j)$ implies that  $q=q_0^a$ is not prime unless $r=3$ and $a=j=1$. In the latter case, the conditions $3\mid \frac{q^2 - 1}{(3,q+1)}$ and  $o_3(q_0)=2$ imply that  $q\equiv -1\pmod{9}$.
      
\end{enumerate}
}
\end{remark}

\subsection{Further properties of special pairs}

\begin{corollary}

\label{cor:Rtr}
Let $(X^\Sigma,R)$ be a special pair with $X, \Sigma, \sigma, R$ as in Definition~$\ref{def1}$. Then $R$ is transitive on $\Sigma\setminus\{\sigma\}$ if and only if  $X\neq{\rm Ree}(3)$. 
\end{corollary}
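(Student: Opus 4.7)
The plan is to argue by case analysis on the ten lines of Table~\ref{t:allr}, which classify all special pairs via Theorem~\ref{t:special}. The key reformulation: since $R\lhd M_\sigma$ has prime index $r$, whenever $M$ acts $2$-transitively on $\Sigma$ the $R$-orbits on $\Sigma\setminus\{\sigma\}$ form a system of blocks of equal size for $M_\sigma$, of cardinality $1$ or $r$. This translates to the equivalence that, \emph{provided $M$ is $2$-transitive on $\Sigma$}, $R$ is transitive on $\Sigma\setminus\{\sigma\}$ if and only if $M_{\sigma,\tau}\not\leq R$ for some (equivalently every) $\tau\in\Sigma\setminus\{\sigma\}$.

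For the direction $X={\rm Ree}(3)\Longrightarrow R$ intransitive, Line~7 is the unique line in which the socle $M=\Soc(X)$ fails to be $2$-transitive on $\Sigma$: here $M\cong\PSL(2,8)$ has $|M_\sigma|=18<27=|\Sigma|-1$, and $R\cong C_9$; in particular $|R|<|\Sigma|-1$, so $R$ cannot act transitively on $\Sigma\setminus\{\sigma\}$.

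For the converse I would first verify, in each of Lines~1--6 and 8--10, that $M$ is $2$-transitive on $\Sigma$ (a fact available from the classification of finite $2$-transitive groups), and then apply the reformulation above. In Lines~1, 4, 5, 6, 9, 10, a direct arithmetic check gives $\gcd(r,|\Sigma|-1)=1$ (for example, $|\Sigma|-1=q^3$ and $r\mid q^2-1$ in Line~4; $r=2$ and $|\Sigma|-1$ is odd in Lines~5, 6, 9, 10; $r=3$, $|\Sigma|-1=4$ in Line~1), forcing $R$ to have a single orbit. The three residual cases (Lines~2, 3, 8) need a structural argument producing an element of $M_{\sigma,\tau}$ outside $R$. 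In Line~2 the $2$-point stabiliser $\widehat{M}_{\sigma,\tau}$ in $\widehat{M}=\SL(d,q)$ contains a diagonal matrix whose top-left entry is a primitive root $\omega$ of $\mathbb{F}_q^*$, which is not an $r$-th power and so lies outside $\widehat{R}$ by~\eqref{e:Rhat}; hence its image lies outside $R$. In Line~3 one writes $M_\sigma=U\rtimes L$ with $L\cong\GL(2,2)\cong S_3$, and notes that $M_{\sigma,\tau}\cong V_4$ contains the transvection $I+E_{23}$, whose image in $L$ is a transposition, hence outside the index-$2$ subgroup $R\cong A_4=U\rtimes A_3$. In Line~8, $M_{\sigma,\tau}$ is the Mathieu group of order $72$, while the maximal subgroups of $R\cong A_6$ have orders $60$, $36$, $24$, ruling out any embedding.

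The main obstacle is this trio of residual structural cases: each demands concrete identification of the $2$-point stabiliser $M_{\sigma,\tau}$ and of the specific index-$r$ subgroup $R$, drawing on the case-by-case analysis already present in the proof of Theorem~\ref{t:special}. Once those identifications are in hand, exhibiting an element of $M_{\sigma,\tau}$ outside $R$ is routine, and both directions of the equivalence follow.
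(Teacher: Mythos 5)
Your proof is correct, but it follows a genuinely different route from the paper's. The paper proves the ``if'' direction by a line-by-line check that $R$ itself is transitive, exhibiting inside $R$ a concrete transitive (often regular) subgroup in each of Lines 1--6 and 8--10 of Table~\ref{t:allr} --- e.g.\ $[q^{d-1}].\SL(d-1,q)$ in Line 2, the unipotent normal subgroup of order $q^3$ in Lines 4 and 6, $\Omega^{\varepsilon}(2d,2)$ in Line 5, and $\PSL(2,9)$, $\PSU(3,5)$, ${\rm McL}$ in Lines 8--10 --- and for $X={\rm Ree}(3)$ it quotes that $R=C_9$ has three orbits of length $9$ (your simple order count $|R|=9<27=|\Sigma|-1$ already suffices there). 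Your argument instead uses the uniform reduction that, since $R\lhd M_\sigma$ has prime index $r$ and $M_\sigma$ is transitive on $\Sigma\setminus\{\sigma\}$ (which does hold in every line except Line 7, where $M=\PSL(2,8)$ is not $2$-transitive of degree $28$), the number of $R$-orbits is $1$ or $r$, and transitivity of $R$ is equivalent to $M_{\sigma,\tau}\not\le R$; this kills six lines by the divisibility test $r\nmid(|\Sigma|-1)$ and leaves only Lines 2, 3, 8 for structural work, where your arguments are sound (the element $\mathrm{diag}(\omega,\omega^{-1},1,\dots,1)\in\SL(d,q)$ fixes the relevant pair of $1$-spaces and its corner entry is not an $r$-th power, so it lies outside $\widehat R$, which contains $Z_0$; and $A_6$ has no subgroup of order $72$). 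The paper's approach buys explicit transitive subgroups of $R$ with no appeal to two-point stabilisers; yours buys uniformity and minimal case knowledge. Two cosmetic repairs: the description \eqref{e:Rhat} is derived in the paper only in the sub-case $d\ge3$ with $r$ odd (the $d=2$ and small-$q$ cases are noted separately), so you should remark that the ``top-left entry an $r$-th power'' description of the unique index-$r$ normal subgroup of $\widehat M_\sigma$ holds throughout Line 2 because the abelianisation of $\widehat M_\sigma$ is cyclic and given by that entry; and in the paper's row-vector convention your transvection $I+E_{23}$ fixes $\langle e_1\rangle$ and $\langle e_3\rangle$ rather than $\langle e_2\rangle$, so take $\tau=\langle e_3\rangle$ (or transpose).
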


\begin{proof}
If  $X={\rm Ree}(3)$, then as we showed in the proof of Theorem~\ref{t:special}, $R=C_9$ and $R$ has three orbits of size $9$ on $\Sigma\setminus\{\sigma\}$. Suppose now that $X\neq{\rm Ree}(3)$.  We prove that  $R$ is transitive on $\Sigma\setminus\{\sigma\}$ with a case-by-case verification. Line numbers refer to Table~\ref{t:allr}. In Line 1, $R=C_2^2$ is regular on $\Sigma\setminus\{\sigma\}$; in Line 2, 
$R$ contains $[q^{d-1}].\SL(d-1,q)$ which is transitive on $\Sigma\setminus\{\sigma\}$; in Line 3, the group $R=A_4$ is transitive of degree $6$; in Line 4, $R$ contains a normal subgroup of order $q^3$ which is   regular on $\Sigma\setminus\{\sigma\}$; 
 in Line 5, $R=\Omega^\varepsilon(2d,2)$ is transitive on $\Sigma\setminus\{\sigma\}$ (the action is equivalent to the action on singular $1$-spaces); in Line 6,  $R$ contains a normal subgroup of order $q^3$ which is   regular on $\Sigma\setminus\{\sigma\}$; in Lines 8, 9 and 10, $R=\PSL(2,9)$, $R=\PSU(3,5)$ and $R=\rm{McL}$, respectively, each of which is transitive on the corresponding set $\Sigma\setminus\{\sigma\}$.
\end{proof}

\begin{remark}\label{rem2} {\rm
$(a)$ Lemma~\ref{lem3} provides a strategy to examine each of the special pairs $(X^\Sigma,R)$, with $X\leq \Sym(\Sigma)$ having socle $M$ and $R\lhd M_\sigma$ for some $\sigma\in\Sigma$, identified in Theorem~\ref{t:special}. Namely we first construct the innately transitive group $H$ acting on $\Omega:=[M:R]$. If we need to compute with this action it is described in a concrete way in \cite[p.46, Lemma 3.1]{PS}. 

\smallskip
$(b)$ By Theorem~\ref{t:special}, for each $2$-transitive group $X^\Sigma$, and each prime $r$, such that $X^\Sigma$ and $r$ satisfy one of the lines of Table~\ref{t:allr}, there is a unique subgroup $R$, up to conjugacy in $X$, such that  $(X^\Sigma,R)$ is a special pair. In other words, for $M=\Soc(X)$ and each $\sigma\in\Sigma$, the stabiliser $M_\sigma$ contains a unique subgroup $R$  such that $M_\sigma/R\cong C_r$. By Lemma~\ref{lem3}(b) we need to understand the normaliser $N=N_{{\rm Sym}(\Omega)}(M)$. In particular we need to find all subgroups $G$ (if any exist) such that, for $\Sigma'$ the set of $C$-orbits in $\Omega$,  
\begin{itemize}
    \item $H\leq G\leq N$, and also 
    \item the group $G^{\Sigma'}\cong G/C$  is permutationally isomorphic to $X^\Sigma$.
\end{itemize} 
}
\end{remark}

\bigskip 

 In the next two sections, we consider the lines of Table \ref{t:allr} representing infinite families of special pairs. For each line we obtain information to achieve {\bf Objectives 2} and {\bf 3}: we classify groups in $\PIT$ that are preimages of the given special pair up to permutational isomorphism and work out which of them have rank 3. The following lemma is useful for this purpose.

\begin{lemma}\label{lem4}
Using Notation $\ref{not1}$, so in particular $G^\Omega\in\PIT$, assume also that $\widehat \varphi(G^{\Omega})=(G^\Sigma, (M_\alpha)^\Sigma)$ is a special pair and $G^{\Sigma} \neq {\rm Ree}(3)$. Then the following are equivalent:
\begin{enumerate}
    \item[$(a)$] $G$ has rank $3$ on $\Omega$;
    \item[$(b)$] $G_\alpha$ is transitive on $\Omega\setminus \sigma$;
    \item[$(c)$] for some $\sigma'\in\Sigma\setminus\{\sigma\}$,  $G_{\alpha, \sigma'}$ is transitive on $\sigma'$; 
    \item[$(d)$] for some $\sigma'\in\Sigma\setminus\{\sigma\}$, $G_{\sigma, \sigma'}$ is transitive on $\sigma\times \sigma'$ (in product action);
    \item[$(e)$] for some $\sigma'\in\Sigma\setminus\{\sigma\}$, and $\alpha'\in \sigma'$, $G_{\sigma,\sigma'}=(G_{\sigma, \alpha'})(G_{\alpha,\sigma'})$.
\end{enumerate}
\end{lemma}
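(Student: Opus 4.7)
My plan is to prove the equivalences as the cycle (a)$\Leftrightarrow$(b)$\Leftrightarrow$(c)$\Leftrightarrow$(d)$\Leftrightarrow$(e), leaning throughout on Lemma~\ref{lem2b} (which gives that $G_\sigma^\sigma$ is $2$-transitive) and Corollary~\ref{cor:Rtr} (which, since $G^\Sigma\ne{\rm Ree}(3)$, gives that $(M_\alpha)^\Sigma$, and hence $G_\alpha$, is transitive on $\Sigma\setminus\{\sigma\}$).

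For (a)$\Leftrightarrow$(b)$\Leftrightarrow$(c): because $\sigma$ is a $C$-orbit (hence a block of imprimitivity for $G$) and $G_\sigma^\sigma$ is $2$-transitive, the $G_\alpha$-orbits on $\Omega$ always split as $\{\alpha\}$, $\sigma\setminus\{\alpha\}$, together with further orbits inside $\Omega\setminus\sigma$; so rank $3$ is equivalent to saying there is exactly one further orbit, which gives (a)$\Leftrightarrow$(b). Using Corollary~\ref{cor:Rtr}, $G_\alpha$ acts transitively on the blocks in $\Sigma\setminus\{\sigma\}$, so its orbits on $\Omega\setminus\sigma$ are in bijection with the orbits of the setwise stabiliser $G_{\alpha,\sigma'}$ on any chosen $\sigma'\in\Sigma\setminus\{\sigma\}$; this gives (b)$\Leftrightarrow$(c), and incidentally shows that (c) does not depend on the choice of $\sigma'$.

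Before dealing with (d) and (e) I would establish the auxiliary fact that $G_{\sigma,\sigma'}$ acts transitively on $\sigma$ with $\alpha$-stabiliser $G_{\alpha,\sigma'}$ (and symmetrically on $\sigma'$ with $\alpha'$-stabiliser $G_{\sigma,\alpha'}$). This is a short orbit-counting chain: $2$-transitivity of $G^\Sigma$ gives $|G_\sigma|=(|\Sigma|-1)|G_{\sigma,\sigma'}|$; transitivity of $G_\sigma^\sigma$ together with $\alpha\in\sigma$ (so $G_\alpha=G_{\sigma,\alpha}$) gives $|G_\sigma|=|\sigma||G_\alpha|$; and Corollary~\ref{cor:Rtr} gives $|G_\alpha|=(|\Sigma|-1)|G_{\alpha,\sigma'}|$. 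Eliminating $|G_\sigma|$ and $|G_\alpha|$ yields $|G_{\sigma,\sigma'}|=|\sigma||G_{\alpha,\sigma'}|$, and since $G_{\alpha,\sigma'}$ is contained in the $\alpha$-stabiliser inside $G_{\sigma,\sigma'}$, equality of indices forces the claim. With this in hand, (c)$\Leftrightarrow$(d) is immediate: in the componentwise action of $K:=G_{\sigma,\sigma'}$ on $\sigma\times\sigma'$ the first projection is transitive with point stabiliser $G_{\alpha,\sigma'}$, so transitivity on the product is equivalent to transitivity of $G_{\alpha,\sigma'}$ on $\sigma'$.

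For (d)$\Leftrightarrow$(e), write $A:=G_{\sigma,\alpha'}$, $B:=G_{\alpha,\sigma'}$, noting $A\cap B=G_{\alpha,\alpha'}$ and $|K|=|\sigma||B|=|\sigma'||A|$ by the previous step. The $K$-orbit of $(\alpha,\alpha')$ has size $|K|/|A\cap B|$, while $|\sigma\times\sigma'|=|K|^2/(|A||B|)$; these coincide iff $|A||B|/|A\cap B|=|K|$, i.e., iff $|AB|=|K|$, which (since $AB\subseteq K$) is equivalent to $AB=K$, namely (e). The one step requiring care is the auxiliary transitivity of $G_{\sigma,\sigma'}$ on $\sigma$: it is exactly there that we genuinely need the exclusion of ${\rm Ree}(3)$ through Corollary~\ref{cor:Rtr}, whereas everything else is either direct from the cited lemmas or standard product-action counting.
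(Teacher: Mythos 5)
Your proof is correct and follows the paper's overall scheme: a chain of equivalences driven by Lemma~\ref{lem2b} and Corollary~\ref{cor:Rtr}, with (a)$\Leftrightarrow$(b)$\Leftrightarrow$(c) argued essentially as in the paper. Where you diverge is in the product-action steps: the paper simply observes that $C\leq G_{\sigma,\sigma'}$ and $C$ is transitive on each block, so $G_{\sigma,\sigma'}$ is transitive on $\sigma$ and on $\sigma'$ with the evident point stabilisers, and then reads off (c)$\Leftrightarrow$(d) and the factorisation (e); you instead establish that transitivity by the order count $|G_{\sigma,\sigma'}|=|\sigma|\,|G_{\alpha,\sigma'}|$ (which routes through Corollary~\ref{cor:Rtr} again) and then prove (d)$\Leftrightarrow$(e) via $|AB|=|A||B|/|A\cap B|$; both arguments are valid and amount to standard counting. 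One small correction to your closing remark: the transitivity of $G_{\sigma,\sigma'}$ on $\sigma$ does not in fact require the exclusion of ${\rm Ree}(3)$ at all, since it follows from $C\leq G_{\sigma,\sigma'}$ alone; the exclusion (via Corollary~\ref{cor:Rtr}) is genuinely needed where both proofs invoke it, namely to get $G_\alpha$ transitive on $\Sigma\setminus\{\sigma\}$ in the step (b)$\Leftrightarrow$(c).
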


\begin{proof}
Note that $G$ has rank $3$ if and only if the $G_\alpha$-orbits are $\{\alpha\}, \sigma \setminus\{\alpha\}$ and $\Omega\setminus \sigma$. By Lemma~\ref{lem2b}, $G_\alpha$ is transitive on $\sigma\setminus\{\alpha\}$, and so condition $(a)$ is equivalent to condition $(b)$. 
Further, by Corollary~\ref{cor:Rtr}, $G_\sigma$ is transitive on $\Sigma\setminus\{\sigma\}$, and, since $C$ is transitive on $\sigma$, $G_\sigma=CG_\alpha$. This implies that $G_\alpha$ is transitive on 
$\Sigma\setminus\{\sigma\}$. Thus condition $(b)$ is equivalent, for (some and hence for all) $\sigma'\in\Sigma\setminus\{\sigma\}$, to $G_{\alpha, \sigma'}$ being transitive on $\sigma'$, that is, conditions $(b)$ and $(c)$ are equivalent.  Now $G_{\sigma,\sigma'}$ contains $C$ and so is transitive on both $\sigma$ and $\sigma'$. By the definition of the product action of $G_{\sigma,\sigma'}$ on $\sigma\times \sigma'$, then conditions $(c)$ and $(d)$ are equivalent. Also condition $(c)$ is equivalent to the factorisation condition in part $(e)$. 
\end{proof}

\section{Properly innately transitive groups with linear or unitary plinth}\label{s:families}

In this  section we consider properly innately transitive groups which give rise to a special pair in Lines 2 and 4 of Table \ref{t:allr}.   We start by giving constructions of group actions, which are equivalent to Construction \ref{con:gen} but are more geometric and easier to work with. Then we establish properties of each construction, and, finally,  show that all groups in $\PIT$ with  a special pair in Line $2$ or $4$ of Table~\ref{t:allr} are permutationally isomorphic to a subgroup of the group described in Construction \ref{con:psl} or \ref{con:psu}.

\subsection{Line 2: linear plinth $\PSL(d,q)$}

Let  $F=\mathbb{F}_q$, where $q=q_0^a$ with $q_0$ prime and $a\geq 1$ such that $q\geq3$, and consider $V=\mathbb{F}_q^d$, the space of $d$-dimensional row vectors.  Assume that $d\geq2$ with $(d,q)\ne (2,3)$ so that $\GL(V)$ is insoluble, and let  $\binom{V}{1}$ denote the set of $1$-subspaces of $V$.

Let $\omega\in \mathbb{F}_q$ be such that $\langle \omega \rangle= \mathbb{F}_q^*$, let $r$ be a prime diving $q-1$, and let $E=\{ \omega^i\langle \omega^r\rangle \mid 0\leq i\leq r-1\}$, the set of cosets of the multiplicative subgroup $\langle \omega^r\rangle$ of $F^*:=\langle \omega\rangle$. Note that the multiplicative group $F^*$ acts by multiplication on $E$ inducing a cyclic (and regular) permutation action with kernel $\langle \omega^r\rangle$.   Moreover,  $E\cong C_r$ with multiplication as the natural operation on $E$.
Consider the set 
\begin{equation}\label{e:defOm1}
\Omega:=\{ \varepsilon v\mid \varepsilon\in E, v\in V^*\}, \text{ where } V^*=V\setminus\{0\}.    
\end{equation}
 The elements of $\Omega$ have non-unique representations as follows.

\begin{lemma}\label{lem:rep}
$\varepsilon v = \varepsilon' v'$ if and only if, for some $\lambda\in F^*$, we have $v'=\lambda v$ and $\varepsilon'=\lambda^{-1}\varepsilon$. 
\end{lemma}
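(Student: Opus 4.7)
The plan is to prove the biconditional directly by viewing $\varepsilon v$ as a subset of $V^*$: explicitly, $\varepsilon v = \{\mu v \mid \mu \in \varepsilon\}$. Since $v\ne 0$, the map $F^* \to \langle v\rangle\setminus\{0\}$, $\mu\mapsto \mu v$, is a bijection, so $\varepsilon v$ is a subset of the line $\langle v\rangle$ of cardinality $|\varepsilon|=(q-1)/r$, and moreover any set equality of the form $A v = B v$ for $A,B\subseteq F^*$ forces $A=B$ as subsets of $F^*$.

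For the backward direction, assume $v'=\lambda v$ and $\varepsilon'=\lambda^{-1}\varepsilon$ for some $\lambda\in F^*$. Then, using commutativity of scalar multiplication,
\[
\varepsilon' v' = (\lambda^{-1}\varepsilon)(\lambda v) = \{(\lambda^{-1}\mu)(\lambda v)\mid\mu\in\varepsilon\} = \{\mu v\mid\mu\in\varepsilon\}= \varepsilon v,
\]
which is the required identity.

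For the forward direction, suppose $\varepsilon v = \varepsilon' v'$. Fix any $\mu\in\varepsilon$; then $\mu v\in\varepsilon' v'$, so there exists $\mu'\in\varepsilon'$ with $\mu v=\mu' v'$. Setting $\lambda=(\mu')^{-1}\mu\in F^*$ gives $v'=\lambda v$. Substituting, $\varepsilon v=\varepsilon'v' = \varepsilon'(\lambda v)=(\lambda\varepsilon')v$, and the injectivity observation in the first paragraph yields $\varepsilon=\lambda\varepsilon'$, that is, $\varepsilon'=\lambda^{-1}\varepsilon$.

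The argument is elementary and there is no real obstacle; the only point requiring care is keeping track that $\varepsilon$, $\varepsilon'$ are cosets (hence subsets of $F^*$) rather than individual scalars, so that the equalities $\varepsilon=\lambda\varepsilon'$ and $\varepsilon v=\varepsilon'v'$ are set equalities, and that multiplication of a coset by a scalar (or by $v$) is interpreted element-wise.
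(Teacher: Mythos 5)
Your proof is correct and follows essentially the same route as the paper: both directions are handled directly from the interpretation of $\varepsilon v$ as the subset $\{\mu v\mid\mu\in\varepsilon\}$ of the line $\langle v\rangle$, with the forward direction first forcing $v'=\lambda v$ and then cancelling $v$ to get $\varepsilon'=\lambda^{-1}\varepsilon$. Your version merely makes explicit the set-theoretic bookkeeping (injectivity of $\mu\mapsto\mu v$) that the paper's terser proof leaves implicit, which is fine.
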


\begin{proof}
If $\lambda\in F^*$, then $\varepsilon':=\lambda^{-1}\varepsilon \in E$ (since multiplication by $\lambda^{-1}$ permutes the cosets in $E$), and we have an equality $\varepsilon v = \varepsilon' v'$ where $v'=\lambda v$.
Conversely suppose that $\varepsilon v = \varepsilon' v'$. Then $v',v$ lie in the same 1-subspace so $v'=\lambda v$ for some $\lambda\in F^*$. Then the equality $\varepsilon v = \varepsilon' v'$ implies that $\varepsilon'=\lambda^{-1}\varepsilon$.
\end{proof}

We define an action of $\GammaL(d,q)$ on $\Omega$ as follows. Each element of $\GammaL(d,q)$ is uniquely represented as a product $\phi^iA$ where $A\in\GL(d,q)$, $0\leq i<e$, and $\phi$ is the field automorphism $\lambda\to \lambda^{q_0}$ of $F$ which acts on vectors $v=(\lambda_1,\dots,\lambda_d)\in V$ by $\phi:v\to (\lambda_1^{q_0},\dots,\lambda_d^{q_0})$. Note that $\phi$ leaves invariant (setwise) the subgroup  $\langle \omega^r\rangle$ and acts on $E$ mapping $\varepsilon \in E$ to $\varepsilon^\phi:=\{\lambda^{q_0}\mid\lambda\in\varepsilon\}$. 
Therefore
\begin{equation}\label{e:action}
    \phi^i A:\varepsilon v\mapsto \varepsilon^{\phi^i} (v^{\phi^i}A), \ \mbox{for $A\in\GL(d,q)$, $0\leq i<e$, and
    $\varepsilon v\in \Omega$.}
\end{equation}
This gives us a family of proper innately transitive groups with plinths $\PSL(d,q)$. 
First, we make a formal description of this construction. In the following Proposition~\ref{p:action} we show that \eqref{e:action}
defines an action of $\GammaL(d,q)$ on $\Omega$, and that the extra condition on $r$ in Construction~\ref{con:psl} is both necessary and sufficient for this action to be properly innately transitive.

\begin{construction}\label{con:psl}
Let  $F=\mathbb{F}_q$, where $q=q_0^a\geq 3$ with $q_0$ prime and $a\geq 1$, let $V=\mathbb{F}_q^d$, the space of $d$-dimensional row vectors with $d\geq2$ and $(d,q)\ne (2,3)$. Let $r$ be a prime such that $r$ divides $(q-1)/(d,q-1)$. Let $Y=\langle \omega^r I\rangle$ and 
$$
\Omega = \{ \omega^i \langle \omega^r\rangle u|0\leq i<r,\ u\in V^*\}.
$$ 
Then the natural induced action of $\GammaL(d,q)$ on $\Omega$ given by \eqref{e:action} yields a permutation group $\ff=\GammaL(d,q)/Y$ such that $\ff^\Omega\in\PIT$ with plinth $M =\SL(d,q)Y/Y\cong \PSL(d,q)$. The group $\ff$ stabilises the partition 
$$
\Sigma=\{\sigma(U) \mid U \in \tbinom{V}{1}\}\ 
\text{ where } \sigma(U)=\{\omega^i\langle \omega^r\rangle  u\mid 0\leq i<r\} \text{ if } U=\langle u\rangle.
$$
\end{construction}

\begin{proposition}\label{p:action}
 Let $r$ be a prime dividing $q-1$, and let $\Omega$, $\Sigma, Y$ and $M$ be as defined in Construction $\ref{con:psl}$.  \begin{enumerate}
    \item[$(a)$] The map given in \eqref{e:action} defines a transitive permutation action of $\GammaL(d,q)$  on $\Omega$ with kernel $Y$, and $\GammaL(d,q)$  leaves invariant the partition $\Sigma$.
\item[$(b)$] The following are equivalent for the induced group $\ff:=\GammaL(d,q)/Y$ on $\Omega$:
        \begin{enumerate}
        \item[$(i)$] $\ff$ is innately transitive, 
        \item[$(ii)$] $r$ divides $(q-1)/(d,q-1)$, 
        \item[$(iii)$] $\ff^\Omega\in\PIT$ with plinth $M = \SL(d,q)Y/Y\cong\PSL(d,q)$ and  $C:=\C_{\overline{G}}(M)\cong C_r$.
        \end{enumerate}  
    \item[$(c)$] If the conditions in part $(b)$ hold, then $\C_{\Sym(\Omega)}(M)=C=Z/Y$ and $N_{\Sym(\Omega)}(M)=\GammaL(d,q)/Y= \ff$. 
\end{enumerate}
\end{proposition}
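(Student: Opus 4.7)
The plan is to handle the three parts in order: (a) is mostly routine manipulation using Lemma~\ref{lem:rep}; (c) follows quickly from Lemma~\ref{lem:uniquesigma} together with the known identification $N_{\Sym(\binom{V}{1})}(\PSL(d,q))=\PGammaL(d,q)$; and the substance of the proposition lies in the equivalence~(b).

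For part~(a), the well-definedness of \eqref{e:action} follows from Lemma~\ref{lem:rep} applied to $\lambda^{\phi^i}$: the rescaling $(\varepsilon,v)\mapsto(\lambda^{-1}\varepsilon,\lambda v)$ is preserved by $\phi^i$ because $\phi^i$ is multiplicative on $F^*$. The homomorphism property is a routine check. For the kernel, if $\phi^i A$ acts trivially, then fixing $\varepsilon=\langle\omega^r\rangle$ and varying $v$ forces $v^{\phi^i}A\in\langle v\rangle$ for every nonzero $v$; testing on a basis and on its sum forces $\phi^i=1$ and $A$ scalar, and the residual condition on the scalar places it in $Y$. Transitivity of $\GammaL(d,q)$ on $\Omega$ follows by combining transitivity of $\GL(d,q)$ on $V^*$ with transitivity of $F^*$ on $E$ via scalar matrices, and invariance of $\Sigma$ is immediate from \eqref{e:action} since $(\sigma(U))^{\phi^i A}=\sigma(\langle u^{\phi^i}A\rangle)$.

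For part~(b), $(iii)\Rightarrow(i)$ is trivial. For $(ii)\Rightarrow(iii)$: any scalar in $Y$ of determinant~$1$ lies in $Z_{\SL}:=Z(\SL(d,q))$, so one always has $\SL(d,q)\cap Y\leq Z_{\SL}$, and condition~$(ii)$ is precisely the reverse inclusion $Z_{\SL}\leq Y$; hence $M=\SL(d,q)Y/Y\cong\PSL(d,q)$, which is simple since $(d,q)\neq(2,2),(2,3)$, and $M$ is transitive on~$\Omega$ by part~(a). The centraliser of $\SL(d,q)$ in $\GammaL(d,q)$ is $\langle\omega I\rangle$ (field automorphisms do not centralise $\SL$), so $C=\langle\omega I\rangle/Y\cong C_r$; a short divisibility check using~$(ii)$ shows $\langle\omega I\rangle\cap\SL(d,q)Y=Y$, giving $C\cap M=1$, and the $C$-orbits on $\Omega$ are precisely the parts $\sigma(U)\in\Sigma$, so $C$ is intransitive. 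Thus $\ff^\Omega\in\PIT$ with plinth~$M$. For $(i)\Rightarrow(ii)$, I argue contrapositively. If $(ii)$ fails, the analogous coset calculation yields $\omega I\in\SL(d,q)Y$, i.e.\ $C\leq M$. Let $N\lhd\ff$ be any minimal normal subgroup. If $N\not\leq M$, then $N\cap M\lhd\ff$ and minimality forces $N\cap M=1$, whence $[N,M]=1$ and $N\leq\C_\ff(M)=C\leq M$, a contradiction. Hence $N\leq M$, and since the normal subgroups of $M$ are contained in $Z(M)$ or equal $M$ itself (as $M/Z(M)\cong\PSL(d,q)$ is simple), and $M$ contains $C$ properly so is not minimal normal in $\ff$, we get $N\leq Z(M)$. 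But $|Z(M)|\leq (d,q-1)<|\Omega|$, so no such $N$ can be transitive; thus $\ff$ has no transitive minimal normal subgroup and is not innately transitive.

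For part~(c), assume the conditions of~(b). Then $\ff^\Sigma\cong\PGammaL(d,q)$ is $2$-transitive, hence primitive, and $M^\Omega$ is not regular (point stabilisers are nontrivial for $d\geq 2$), so Lemma~\ref{lem:uniquesigma} gives $\C_{\Sym(\Omega)}(M)=C$. Let $N:=N_{\Sym(\Omega)}(M)$ and let $K$ be the kernel of the action of $N$ on $\Sigma$ (well-defined since $\Sigma$ is the set of $C$-orbits and $C\lhd N$). Then $C\leq K$, and $K\cap M$ lies in the kernel of $M$ on $\Sigma$, which equals $M\cap C=1$ by~(b). Hence $[K,M]\leq K\cap M=1$, so $K\leq\C_{\Sym(\Omega)}(M)=C$ and $K=C$. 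Therefore $N/C=N^\Sigma\leq N_{\Sym(\Sigma)}(M^\Sigma)=\PGammaL(d,q)=\ff/C$, and combined with $\ff\leq N$ this forces $N=\ff$.

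The main obstacle is the structural analysis underlying $(i)\Rightarrow(ii)$ of~(b): one must show that when $(ii)$ fails, the centre of $M$ absorbs every minimal normal subgroup of $\ff$, which rests on the simplicity of $M/Z(M)\cong\PSL(d,q)$ together with the identification $\C_\ff(M)=C\leq M$. The supporting coset-arithmetic identities $Z_{\SL}\leq Y\iff(ii)$ and $\langle\omega I\rangle\cap\SL(d,q)Y=Y$ (under~$(ii)$) are short but delicate divisibility computations in $\mathbb{Z}/(q-1)\mathbb{Z}$.
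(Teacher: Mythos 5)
Your argument is correct in substance and follows essentially the same route as the paper's proof. In part $(b)$ both arguments turn on the same dichotomy: the condition $r\mid(q-1)/(d,q-1)$ is exactly $Z(\SL(d,q))\le Y$ (the paper phrases it as $Z\cap\SL(d,q)=Y\cap\SL(d,q)$), so that $\SL(d,q)Y/Y$ is either the simple group $\PSL(d,q)$, giving innate transitivity, or a quasisimple group containing the intransitive normal subgroup $Z/Y\cong C_r$, excluding it. Part $(c)$ rests in both cases on the same two inputs, namely $\C_{\Sym(\Omega)}(M)\cong C_r$ and $N_{\Sym(\Sigma)}(M^\Sigma)=\PGammaL(d,q)$ (the paper cites \cite[Table 7.4]{Cam99} for the latter; you should give a reference as well). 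Where you differ is only in the level of detail: for $(i)\Rightarrow(ii)$ the paper simply asserts that $Z/Y$ is the unique minimal normal subgroup in the failing case, whereas you justify the intransitivity of every minimal normal subgroup via the quasisimple structure of $M$; and in $(c)$ you obtain $\C_{\Sym(\Omega)}(M)=C$ from Lemma~\ref{lem:uniquesigma} (a valid application, since $\ff^\Sigma$ is $2$-transitive and $M^\Omega$ is non-regular) and make the kernel-on-$\Sigma$ argument explicit, where the paper instead computes $\C_{\Sym(\Omega)}(M)\cong M_\sigma/M_\alpha$ from \cite[Theorem 3.2(i)]{PS}.

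Three local points need repair, none structural. First, in the kernel computation in $(a)$: testing $v^{\phi^i}A\in\langle v\rangle$ only on a basis and its sum forces $A$ to be scalar but does \emph{not} force $\phi^i=1$ --- the element $\phi$ itself fixes every $\langle e_j\rangle$ and $\langle e_1+\cdots+e_d\rangle$. You must also test a vector such as $e_1+\omega e_2$, whose image $c(e_1+\omega^{q_0^i}e_2)$ lies in $\langle e_1+\omega e_2\rangle$ only if $q_0^i\equiv 1\pmod{q-1}$; without this the kernel could a priori contain elements $\phi^i(cI)$ with $i\ne 0$, which would invalidate the identification $\ff=\GammaL(d,q)/Y$ used throughout. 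Second, the identification $\C_{\ff}(M)=\langle\omega I\rangle/Y$ concerns a centraliser in a quotient, not merely the image of $\C_{\GammaL(d,q)}(\SL(d,q))$: one should note that if $[g,\SL(d,q)]\subseteq Y$ then, since $Y$ is central in $\GL(d,q)$, the map $s\mapsto[g,s]$ is a homomorphism into the abelian group $Y$, hence trivial because $\SL(d,q)$ is perfect, so $g\in Z$; you use this fact tacitly again in the contrapositive step $N\le\C_{\ff}(M)=C$. Third, transitivity of the plinth $M$ is not literally ``by part $(a)$'', which concerns $\GammaL(d,q)$: observe instead that $\SL(d,q)$ is transitive on $V^*$ for $d\ge 2$, so it can send $v$ to $\mu v'$ with $\mu$ in the appropriate coset of $\langle\omega^r\rangle$, carrying $\varepsilon v$ to $\varepsilon' v'$. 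With these small fixes your proof is complete and matches the paper's.
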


\begin{proof}
$(a)$ Let $\phi^iA, \phi^jB\in\GammaL(d,q)$ and $\varepsilon v\in \Omega$. Then $(\phi^iA)(\phi^jB)=\phi^{i+j} (A^{\phi^j}B)$, where $A^{\phi^j}=\phi^{-j} A \phi^j$ is the conjugate of $A$ by $\phi^j$, and we have 
\begin{align*}
(\varepsilon v)\phi^{i+j} (A^{\phi^j}B) &= \varepsilon^{\phi^{i+j}} (v^{\phi^{i+j}}A^{\phi^j}B) = \varepsilon^{\phi^{i+j}} (v^{\phi^{i}}A\phi^jB) \\
&= (\varepsilon^{\phi^{i}} (v^{\phi^{i}}A))(\phi^jB)
=(\varepsilon v)(\phi^{i}A)(\phi^jB).
\end{align*}
Since this holds for all elements of $\Omega$, it follows that \eqref{e:action} defines an action of $\GammaL(d,q)$ on $\Omega$. Since $\GammaL(d,q)$ permutes the set  $\binom{V}{1}$ of $1$-subspaces of $V$, it follows that $\GammaL(d,q)$ leaves invariant the partition $\Sigma$. Further, if $x\in\GammaL(d,q)$ acts trivially on $\Omega$ then in particular it acts trivially on $\binom{V}{1}$, and hence lies in the scalar subgroup $Z:=\langle \omega I\rangle$, and so $x=\omega^iI$ for some $i$. For each  
$\varepsilon v\in E$ we have, using \eqref{e:action} and Lemma~\ref{lem:rep},
\[
(\varepsilon v)\omega^iI= \varepsilon (\omega^i v) = (\omega^i\varepsilon)  v
\]
and $\omega^i\varepsilon = \varepsilon$ if and only if $i$ is a multiple of $r$. Thus the kernel of the action is $Y=\langle \omega^r I\rangle$.

$(b)$ The normal subgroup $\SL(d,q)$ is transitive on $\Omega$. Suppose that $Z\cap \SL(d,q)=Y\cap \SL(d,q)$. Then the permutation group on $\Omega$ induced by $\SL(d,q)$ is 
\[
M:=\SL(d,q)Y/Y\cong \SL(d,q)/(Y\cap \SL(d,q))=\SL(d,q)/(Z\cap \SL(d,q))\cong \PSL(d,q),
\]
which is a nonabelian simple group since  $(d,q)\ne (2,2)$ or $(2,3)$. In this case $\ff$ is innately transitive with plinth $M\cong \PSL(d,q)$. On the other hand if $Z\cap \SL(d,q)\ne Y\cap \SL(d,q)$ then $1\ne (Z\cap \SL(d,q))/(Y\cap \SL(d,q)) \cong (Z\cap \SL(d,q))Y/Y$, a normal subgroup of $\ff$ contained in $\SL(d,q)Y/Y$. Since $Z/Y\cong C_r$ and $r$ is prime, it follows that   $(Z\cap \SL(d,q))Y/Y=Z/Y\cong C_r$ and $r$ divides $|Z\cap\SL(d,q)|=(d,q-1)$, the order of the Schur multiplier of $\PSL(d,q)$. In this case $Z/Y$ is the unique minimal normal subgroup of $\ff$; it is intransitive and $\ff$ is not innately transitive. Thus $\ff$ is innately transitive if and only if $Z\cap \SL(d,q)=Y\cap \SL(d,q)$, which is equivalent to $(d,q-1)$ dividing $|Y|=(q-1)/r$, and in turn this is equivalent to $r$ dividing $(q-1)/(d,q-1)$. In this case the group induced by $Z$ on $\Omega$ is $Z/Y\cong C_r$, and this group centralises the plinth $M$; we deduce that $\ff\in\PIT$ with $C_{\ff}(M)\cong C_r$. This proves part $(b)$.

 $(c)$  Let  $\alpha=\langle \omega^r\rangle e_1\in \Omega$,  so  $\alpha\in \sigma(U)$ for  $U=\langle e_1\rangle\in\binom{V}{1}$. Then $\SL(d,q)_{\sigma(U)} = \SL(d,q)_U$,  and induces a transitive cyclic action on the nonzero elements of $U$. Thus the stabiliser $\SL(d,q)_{\alpha}$ is a nontrivial normal subgroup of prime index $r$ in $\SL(d,q)_U$.  It follows from \cite[Theorem 3.2(i)]{PS} that $\C_{\Sym(\Omega)}(M)\cong M_\sigma/ M_\alpha\cong C_r$. Thus $\C_{\Sym(\Omega)}(M)$ is isomorphic to $C$ and to $Z/Y$. Since $Z/Y$ centralises $M$, we have $Z/Y\leq \C_G(M)\leq \C_{\Sym(\Omega)}(M)$, so equality holds and the first assertion is proved.
  By definition, $N_{\Sym(\Omega)}(M) \ge \GammaL(d,q)/Y.$ On the other hand 
$$
|N_{\Sym(\Omega)}(M)/C| \le |N_{\Sym(\Sigma)}(M^{\Sigma})|= |\PGammaL(d,q)|=|(\GammaL(d,q)/Y)/C|,
$$ 
where the first equality holds by \cite[Table 7.4]{Cam99} since $M^{\Sigma}$ is $2$-transitive. Thus $N_{\Sym(\Omega)}(M) = \GammaL(d,q)/Y.$
\end{proof}

Next we determine the subgroups of $\overline{G}$ in Construction~\ref{con:psl} that yield special pairs and those that lie in $\PIT_3$. By Lemma~\ref{lem3}(b), all such subgroups must contain $C\times M$.

\begin{lemma}\label{lem:sppirSL}
Let $\ff, M$ be as in Construction $\ref{con:psl}$ and let $C$ be as in Proposition $\ref{p:action}$. Assume that $C \times M \le G \le \ff.$
\begin{enumerate}
        \item[$(a)$] Let  $\alpha=\langle \omega^r\rangle e_1\in \Omega$ and let $R=(\SL(d,q)_{\alpha})^\Sigma$. Then $(G^\Sigma,R)$ is a special pair if and only if the conditions in Line $2$ of Table $\ref{t:allr}$ hold for $G^{\Sigma}$.
    \item[(b)] If $(d,r)\ne (2,2)$, then $(G^\Sigma,R)$ is a special pair
    if and only if  $G^{\Omega} \in\PIT_3$.
    
    \item[(c)] If $(d,r)= (2,2)$, then $(G^\Sigma,R)$ is a special pair
    if and only if $q\equiv 1\pmod 4$. Moreover,  if $(d,r)= (2,2)$ and $q\equiv 1\pmod 4$, then either     
    \begin{enumerate}
            \item[$(i)$] $G^\Sigma\not\leq \PSigmaL(2,q)$ and $G^{\Omega} \in\PIT_3$; or
            \item[$(ii)$] $G^\Sigma\leq \PSigmaL(2,q)$ and $G^{\Omega}$ has rank $4$.
        \end{enumerate}
 \end{enumerate}
\end{lemma}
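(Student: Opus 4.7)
Part (a) will follow directly from Theorem~\ref{t:special} applied to $G^\Sigma$. Since $C\times M\le G\le \ff$, Proposition~\ref{p:action} gives $\PSL(d,q)=M^\Sigma\le G^\Sigma\le \PGaL(d,q)$, so $G^\Sigma$ is almost simple $2$-transitive with socle $M^\Sigma$. A direct calculation in $\SL(d,q)$ identifies $R=(\SL(d,q)_\alpha)^\Sigma$ with the unique index-$r$ normal subgroup $\widehat R/Z_0$ of $(M^\Sigma)_\sigma$ described in the linear case of the proof of Theorem~\ref{t:special}, so conditions (a) and (b) of Definition~\ref{def1} hold. Condition (c) of Definition~\ref{def1} --- transitivity of $(G^\Sigma)_\sigma$-conjugation on the nontrivial cosets of $(M^\Sigma)_\sigma/R\cong C_r$ --- then translates verbatim into the arithmetic conditions on $G^\Sigma$ in Line~$2$ of Table~\ref{t:allr}.

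For parts (b) and (c), the direction ``rank $3$ $\Rightarrow$ special pair'' is immediate from Lemma~\ref{lem2}(c). For the converse, I will set $\sigma'=\sigma(\langle e_2\rangle)$ and identify $E$ additively with $\mathbb{Z}_r$ via $\omega^i\langle\omega^r\rangle\leftrightarrow i\bmod r$. By Lemma~\ref{lem4}(d), $G^\Omega$ has rank $3$ iff $G_{\sigma,\sigma'}$ acts transitively on $\sigma\times\sigma'\cong\mathbb{Z}_r\times\mathbb{Z}_r$. An element $\widehat m\in\SL(d,q)_{\sigma,\sigma'}$ has block form
\[
\widehat m=\begin{pmatrix}\lambda & 0 & 0\\ 0 & \mu & 0\\ w_1 & w_2 & B\end{pmatrix},\qquad \lambda\mu\det(B)=1,
\]
and acts on $(\varepsilon,\varepsilon')\in E\times E$ as translation by $(\log_\omega\lambda,\log_\omega\mu)\bmod r$. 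For $d\ge 3$, $\det(B)\in F^*$ is unconstrained, so every translation of $\mathbb{Z}_r^2$ is realised and $\widehat M_{\sigma,\sigma'}$ is already transitive, proving (b) in this case. For $d=2$, $\mu=\lambda^{-1}$ and $\widehat M_{\sigma,\sigma'}$ contributes only the translations $(a,-a)$; combining with the scalar action of $Z=\langle\omega I\rangle$ (translations $(i,i)$) yields the subgroup of $\mathbb{Z}_r^2$ of translations $(s,t)$ with $s-t\in 2\mathbb{Z}_r$, which is all of $\mathbb{Z}_r^2$ when $r$ is odd (as $2$ is a unit in $\mathbb{Z}_r$), completing (b); when $r=2$ this subgroup is just $\{(0,0),(1,1)\}$, leaving the two $H_{\sigma,\sigma'}$-orbits $\{(0,0),(1,1)\}$ and $\{(0,1),(1,0)\}$ on $\sigma\times\sigma'$.

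For part (c), where $(d,r)=(2,2)$ forces $q$ odd (so that $q\equiv 1\pmod 4$ is the Line~$2$ special-pair condition), one determines which further elements of $\widehat G$ fuse the two orbits above. The Frobenius $\phi$ multiplies both coordinates by $q_0$, and since $q_0$ is odd it acts trivially on $\mathbb{Z}_2^2$ and never fuses orbits. By $2$-transitivity of $M^\Sigma$ on $\Sigma$, the condition $G^\Sigma\not\le \PSigmaL(2,q)$ is equivalent to $(G^\Sigma)_{\sigma,\sigma'}$ containing an element outside $\PSigmaL(2,q)$, and hence to the existence of $\phi^i\,\mathrm{diag}(\lambda,\mu)\in\widehat G_{\sigma,\sigma'}$ with $\lambda\mu\notin\langle\omega^2\rangle$; the resulting translation $(a,b)\in\mathbb{Z}_2^2$ satisfies $a+b\equiv 1\pmod 2$ and swaps the orbits, giving rank $3$ (part (c)(i)). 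If instead $G^\Sigma\le\PSigmaL(2,q)$, the two orbits of size $2$ persist; each meets $\{\alpha\}\times\sigma'$ in exactly one point, so $G_{\alpha,\sigma'}$ fixes $\sigma'$ pointwise, whence $G_\alpha$ has two orbits on $\Omega\setminus\sigma$ and $G^\Omega$ has rank $4$ (part (c)(ii)).

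The main technical obstacle is the orbit analysis on $\sigma\times\sigma'$: tracking the combined contributions of $\SL(d,q)$, $Z$, powers of $\phi$, and $\GL\setminus\SL\cdot Z$-representatives as translations modulo $r$. Once these are written down additively the case split is elementary, but the crucial arithmetic observation is that the Frobenius contribution is trivial in the only potentially bad case, namely $r=2$, which forces $q_0$ to be odd.
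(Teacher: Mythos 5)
Your proposal is correct and follows essentially the same route as the paper: part (a) via Theorem~\ref{t:special} with $R$ identified as $\widehat R/Z_0$, and parts (b),(c) via Lemma~\ref{lem4} with the same diagonal/block stabiliser computations in $\widehat G\le\GammaL(d,q)$ and the same determinant criterion for $G^\Sigma\not\le\PSigmaL(2,q)$; your use of criterion (d) of Lemma~\ref{lem4} (transitivity on $\sigma\times\sigma'$, recast as translations of $\mathbb{Z}_r^2$) instead of the paper's criterion (c) is only a cosmetic repackaging. The only step left tacit is that in (c)(ii) the count of exactly two $G_\alpha$-orbits on $\Omega\setminus\sigma$ uses transitivity of $G_\alpha$ on $\Sigma\setminus\{\sigma\}$ (Corollary~\ref{cor:Rtr}), which the paper also invokes, so this is a harmless omission.
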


\begin{proof}
 $(a)$   Note that the actions of $G$ on the set $\tbinom{V}{1}$ of $1$-spaces and on $\Sigma$ are permutationally isomorphic. Also the subgroup $R$ is the group $\widehat{R}/Z_0$ in \eqref{e:Rhat} in the proof of Theorem~\ref{t:special}. Thus by Theorem \ref{t:special}, $(G^\Sigma,R)$ is a special pair if and only if the conditions in Line 2 of Table \ref{t:allr} hold for $G^{\Sigma}$. (Note that Line 3  of Table \ref{t:allr}  does not apply since $q\geq3$ in Construction~\ref{con:psl}.)

    $(b)$ and $(c)$\quad If $G^{\Omega} \in\PIT_3$, then,  by Lemma~\ref{lem2}, $(G^\Sigma,R)$ is a special pair, so by part $(a)$ the conditions in Line 2 of Table \ref{t:allr} hold for $G^{\Sigma}$. Conversely, suppose that  the conditions in Line 2 of Table \ref{t:allr} hold for $G^{\Sigma}$, so that  $(G^\Sigma,R)$ is a special pair by part $(a)$, with $R:=(\SL(d,q)_{\alpha})^\Sigma$, and $\alpha=\langle \omega^r\rangle e_1\in \Omega$ lying in the block $\sigma=\sigma(\langle e_1\rangle)$ of $\Sigma$. 
     By Lemma~\ref{lem4}, $G^\Omega$ has rank 3 if and only if, for some $\sigma'\in \Sigma\setminus\{\sigma\}$, $G_{\alpha,\sigma'}$ is transitive on $\sigma'$. We may take  $\sigma'=\sigma(\langle e_d\rangle)$, and we may consider the (possibly unfaithful) action of $\widehat{G}_{\alpha,\sigma'}$, where $\widehat{G}$ is the full preimage of $G$ in $\GammaL(d,q)$ (since $G$ and $\widehat{G}$ induce the same group on $\Omega$).
     Let $L=\widehat{G}_{\alpha, \sigma'}$, the stabiliser of $\alpha$ and $\sigma'$. 
     
     We claim that, if $(d,r)\ne (2,2)$, then $L\cap \GL(d,q)$ is transitive on $\sigma'$. As discussed above, this condition implies that $G^\Omega$ has rank $3$ so $G^\Omega\in\PIT_3$, and hence a proof of this claim will complete the proof of part (b).
    Note that, since $\widehat{G}$ contains $\SL(d,q)$, the group $L\cap \GL(d,q)$ contains all matrices $\mathcal{A}$ in $\SL(d,q)$ of the form 
    \[
    \mathcal{A}=
   \begin{pmatrix}
    y & 0 & 0 \\
   v & A & w  \\
   0 & 0&x
    \end{pmatrix},  \mbox{where $ v,w\in F^{(d-2)\times 1},x \in F^*, y \in \langle \omega^r\rangle$, and $A\in\GL(d-2, q)$,}
    \]
    with $v$, $w$, $A$ empty matrices if $d=2$. The matrix $\mathcal{A}\in\SL(d,q)$ if and only if $yx\det(A)=1$, where we take $\det(A)=1$ for an empty matrix $A$. In particular, if $d\geq3$, then each $x \in F^*$ occurs in such a matrix $\mathcal{A}$ (by choosing $y=1$ and $\det(A)=x^{-1}$), and hence $L\cap \GL(d,q)$ is transitive on $\sigma'$. Hence the claim is proved if $d\geq3$. So assume that $d=2$. 
    The preimage of $C\times M$ in $\widehat{G}$ is the subgroup $Z\,\SL(2,q)$, and the elements of $(Z\,\SL(2,q))_{\sigma,\sigma'}$ are the matrices of the form 
    $$
    \begin{pmatrix}
z & 0 \\
0 & z  \\
\end{pmatrix}\cdot \begin{pmatrix}
x^{-1} & 0 \\
0 & x  \\
\end{pmatrix}=\begin{pmatrix}
zx^{-1} & 0 \\
0 & zx  \\
\end{pmatrix} \text { for } z, x \in F^*.
$$
Such a matrix fixes $\alpha$ if and only if $y:=zx^{-1}\in\langle \omega^r\rangle$. Thus the matrices in  $(Z\,\SL(2,q))_{\alpha,\sigma'}$ are those of the form   
    
    \begin{equation}\label{e:dr2}
     \begin{pmatrix}
y & 0 \\
0 & yx^2  \\
\end{pmatrix} \text { for } y\in\langle \omega^r\rangle, x \in F^*.   
    \end{equation}
Since matrices $yI_2$ act trivially on $\sigma'$, it follows that the action of $(Z\,\SL(2,q))_{\alpha,\sigma'}$ on $\sigma'=\sigma(\langle e_2\rangle)$
is equivalent to the action of $\langle \omega^2\rangle$ on $F^*/\langle \omega^r\rangle$.
If $r$ is odd then this action is transitive. Hence $L\cap \GL(d,q)$ is transitive if $d=2$ and $r$ is odd. Thus the claim is proved, and we have proved part (b). 

Assume now that $(d,r)=(2,2)$. In this  case, the conditions in Line 2 of Table \ref{t:allr} reduce simply to $q\equiv 1\pmod{4}$ since $r=2$ divides $(q-1)/(2,q-1)$. Thus the first assertion of part (c) follows from part (a). Assume now that $q\equiv 1\pmod{4}$. Our arguments in the previous paragraph have shown that $(Z\,\SL(2,q))_{\alpha,\sigma'}$ fixes each of the two points $\beta=\langle \omega^2\rangle e_2$ and $\beta'=\langle \omega^2\rangle \omega e_2$ of $\sigma'$. Further, $\GammaL(2,q)_{\sigma,\sigma'} $ consists of all elements of the form
\begin{equation}\label{e:dr22}
 \phi^i \begin{pmatrix}
x' & 0 \\
0 & x  \\
\end{pmatrix} \text { for }  x,x'\in F^*, 0\leq i<a.
\end{equation}
 Such an element fixes $\alpha$ if and only if $x'\in \langle \omega^2 \rangle$, and interchanges 
$\beta$ and $\beta'$ if and only if $x\in F^*\setminus \langle \omega^2 \rangle$. Since $\widehat{G}$ contains all elements as in \eqref{e:dr2}, $\widehat{G}_{\alpha,\sigma'}$ contains an element interchanging $\beta$ and $\beta'$ if and only if $\widehat{G}$ contains an element of the form in \eqref{e:dr22}, for some $i$, with $x'=1$ and $x=\omega$.
This property holds if and only if $G^\Sigma\not\leq \PSigmaL(2,q)$. Thus if $G^\Sigma\not\leq \PSigmaL(2,q)$ then $G^\Omega\in\PIT_3$, and (c)(i) holds. Assume finally that $G^\Sigma\leq \PSigmaL(2,q)$. Then $\widehat{G}_{\alpha,\sigma'}$ fixes the two points $\beta$ and $\beta'$ of $\sigma'$. Since $\widehat{G}_{\alpha}$ is transitive on $\Sigma\setminus\{\sigma\}$, it follows that 
$\widehat{G}_{\alpha}$ has two orbits in $\Omega\setminus\sigma$, and thus $G^\Omega$ has rank $4$, as in (c)(ii). This completes the proof. 
\end{proof}

Next we show that all properly innately transitive groups corresponding to special pairs in Line 2 of Table~\ref{t:allr} arise from Construction~\ref{con:psl}.

\begin{lemma}\label{lem:pslaction}
Suppose that  $G_0 \le \Sym(\Omega_0)$ is such that $G_0^{\Omega_0} \in \PIT$ with plinth $M_0 \cong \PSL(d,q)$, and that $\widehat{\varphi}(G_0^{\Omega_0})=(G_0^{\Sigma_0},R_0^{\Sigma_0})$ (for $\Sigma_0$ the orbit set of $C_0=\C_{\Omega_0}(M_0)$) is a special pair satisfying the conditions of Line $2$ of Table $\ref{t:allr}$. 
Then $G_0^{\Omega_0}$ is permutationally isomorphic to a subgroup $G^\Omega$ where $C \times M \le G\le \ff$, and 
$\ff, M, r,$ and $\Omega$ are as in Construction $\ref{con:psl}$.
\end{lemma}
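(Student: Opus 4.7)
The strategy is to combine Lemma~\ref{lem3}$(b)$, which reduces the problem to the canonical coset construction of Construction~\ref{con:gen}, with a direct identification of that coset action with the action on $\Omega$ described in Construction~\ref{con:psl}. The uniqueness clause of Theorem~\ref{t:special} is the bridge that makes the identification work.

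First I would fix a specific realisation of the special pair in Line $2$ of Table~\ref{t:allr}. By Proposition~\ref{p:action}, the group $\ff^\Omega\in\PIT$ of Construction~\ref{con:psl} has plinth $M\cong\PSL(d,q)$, centraliser $C=Z/Y\cong C_r$, and the set $\Sigma$ is a $\ff$-invariant partition whose quotient action realises $\PGaL(d,q)$ on the $1$-subspaces of $V$. Picking $\alpha=\langle\omega^r\rangle e_1\in\Omega$ and $\sigma=\sigma(\langle e_1\rangle)\in\Sigma$, the stabiliser $R':=M_\alpha$ is a normal subgroup of $M_\sigma$ of index $r$, hence $(\ff^\Sigma,(R')^\Sigma)$ is a special pair with quotient of order $r$. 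By Theorem~\ref{t:special} this pair is unique up to $X$-conjugation, so after replacing $R$ by an $X$-conjugate we may assume $(G^\Sigma_0,R^{\Sigma_0}_0)\approx(\ff^\Sigma,(R')^\Sigma)$.

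Next I would invoke Lemma~\ref{lem3}$(b)$. This produces a permutational isomorphism $(f_1,\psi_1)$ from $N_0^{\Omega_0}=N_{\Sym(\Omega_0)}(M_0)^{\Omega_0}$ to the coset action $N_{\Sym(\Omega'')}(M)$ on $\Omega''=\{R'x\mid x\in M\}$ (with $M$ acting by right multiplication), sending $M_0\mapsto M$ and $C_0\mapsto\C_{\Sym(\Omega'')}(M)$, and such that the image of $G_0$ is a subgroup $G''$ with $\C_{\Sym(\Omega'')}(M)\times M\le G''\le N_{\Sym(\Omega'')}(M)$.

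It then remains to check that $N_{\Sym(\Omega'')}(M)^{\Omega''}$ and $\ff^\Omega$ are permutationally isomorphic in such a way that $M$ corresponds to $M$ and the point-stabiliser $R'$ is preserved. Since $M$ is transitive on both $\Omega$ and $\Omega''$ with the same stabiliser $R'$ at $\alpha$ and at the trivial coset, \cite[Lemma~2.8]{PS} (the same one used in the proof of Lemma~\ref{lem3}) gives an $M$-equivariant bijection $\psi_2:\Omega''\to\Omega$, $R'x\mapsto \alpha^x$. Extending $\psi_2$ to a permutational isomorphism $(f_2,\psi_2)$ of $\Sym(\Omega'')$ onto $\Sym(\Omega)$ by conjugation, one sees that $f_2$ maps $N_{\Sym(\Omega'')}(M)$ bijectively onto $N_{\Sym(\Omega)}(M)=\ff$ (the latter equality by Proposition~\ref{p:action}$(c)$). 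Composing $(f_1,\psi_1)$ with $(f_2,\psi_2)$ delivers the desired permutational isomorphism from $G_0^{\Omega_0}$ onto a subgroup $G^\Omega$ satisfying $C\times M\le G\le\ff$.

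The only nontrivial verification in this plan is the claim that the stabiliser $R'$ in Construction~\ref{con:psl} is $M$-conjugate to the subgroup $R$ furnished by the hypothesis; this is where Theorem~\ref{t:special} does the real work, since in general two subgroups of the same index in $M_\sigma$ need not be conjugate. All remaining steps are routine applications of \cite[Lemma~2.8]{PS} and Lemma~\ref{lem3}.
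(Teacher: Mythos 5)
Your proposal is correct in substance and rests on the same two pillars as the paper's own proof: the uniqueness (up to conjugacy) of the index-$r$ subgroup furnished by Theorem~\ref{t:special}, and the identification $N_{\Sym(\Omega)}(M)=\ff$ from Proposition~\ref{p:action}$(c)$. The packaging differs a little: the paper identifies $M_0^{\Omega_0}$ directly with the geometric action of $M$ on $\Omega$ (both are the coset action on the cosets of $R$, since by \eqref{e:phi2} the subgroup $R_0$ is a point stabiliser and equals the unique index-$r$ normal subgroup of $(M_0)_{\sigma_0}$), and then gets $G_0\leq N_{\Sym(\Omega)}(M)=\ff$ immediately, with $C\times M\leq G_0$ following from $\C_{G_0}(M_0)\neq 1$ and $|C|=r$ prime; you instead route through Lemma~\ref{lem3}$(b)$ to the abstract coset space $\Omega''$ and then add a second $M$-equivariant transport onto $\Omega$ via \cite[Lemma~2.8]{PS}, which is a more modular but equivalent organisation. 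One statement in your write-up needs repair: the claimed equivalence $(G_0^{\Sigma_0},R_0^{\Sigma_0})\approx(\ff^\Sigma,(R')^\Sigma)$ cannot hold unless $G_0^{\Sigma_0}$ is all of $\PGaL(d,q)$, because Definition~\ref{def:equiv} requires a group isomorphism between the first coordinates of the two pairs. What you need, and what the facts you cite actually give, is $(G_0^{\Sigma_0},R_0^{\Sigma_0})\approx(X^\Sigma,(R')^\Sigma)$ where $X\leq\PGaL(d,q)$ is the image of $G_0^{\Sigma_0}$ under a permutational isomorphism with its action on $1$-spaces sending $\sigma_0$ to $\langle e_1\rangle$, the subgroup $(R')^\Sigma$ being the image of $R_0^{\Sigma_0}$ by the uniqueness of $R$ in Theorem~\ref{t:special}. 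Since $\Omega''$, $C''$ and $N_{\Sym(\Omega'')}(M)$ in Construction~\ref{con:gen} depend only on $M$ and $R'$, Lemma~\ref{lem3}$(b)$ then applies verbatim with this choice of $X$, and the rest of your argument goes through unchanged.
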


\begin{proof}
 Let $V:=(\mathbb{F}_q)^d=\langle e_1, \ldots, e_d \rangle$, and note that $G_0^{\Sigma_0}=G_0/C_0\leq \PGaL(d,q)$ 
 and $G_0^{\Sigma_0}$ is permutationally isomorphic to its action on the set $\binom{V}{1}$ of $1$-spaces. Thus  we may identify $\Sigma_1$ with $\binom{V}{1}$, and without loss of generality we assume that the $1$-space $\sigma_0 \in \Sigma_0$ fixed by $R_0^{\Sigma_0}$ is $\sigma_0 =\langle e_1 \rangle.$  By the conditions of Line $2$ of Table $\ref{t:allr}$,  $r :=|(M_0)_{\sigma_0}/R_0|$ is  a prime dividing $(q-1)/(d,q-1)$ and $o_r(q_0)=r-1$, where $q=q_0^a$ with $q_0$ prime, and $(d,q)\ne (2,2), (2,3)$. 
 
 Thus the assumptions of  Proposition \ref{p:action} hold and it follows that  $M_0$ acts faithfully and transitively on the set $\Omega$ defined in \eqref{e:defOm1} via the action in \eqref{e:action} with $R_0$ the stabiliser of a point of $\Omega$. Therefore we may assume that $M_0^{\Omega}=M$, as in Construction~\ref{con:psl}.  Note that, by \eqref{e:phi2}, $R_0$ is the stabiliser in $G_0$ of a point in $\Omega_0$ lying in $\sigma_0$, so the actions of $M_0$ on $\Omega_0$ and $\Omega$ are permutationally isomorphic, so we may assume further that $\Omega_0=\Omega$ and $M_0=M$.
 Under this identification $G_0$ (which normalises $M_0$) is a subgroup of $N_{\Sym(\Omega)}(M)$ which, by Proposition~\ref{p:action}$(c)$, is the group $\overline{G}$ of Construction~\ref{con:psl}. Finally, $\C_{G_0}(M_0)$ is nontrivial by \eqref{e:cent}, and, since $C=\C_{\Sym(\Omega)}(M)\cong C_{r}$ has prime order, by Proposition \ref{p:action}$(c),$ it follows that $\C_{G_0}(M_0)= C$, and we have $C \times M\leq G_0\leq\overline{G}$, completing the proof.
\end{proof}

Finally we make a comment on the structure of the group $\overline{G}$ in Construction~\ref{con:psl}, which had puzzled us for a while.

\begin{remark}\label{rem:psl}
{\rm 
Suppose that the conditions in  Proposition~\ref{p:action}~$(b)$ hold 
so that $\ff=\GammaL(d,q)/Y$ is innately transitive. The structure of $\ff$ is not immediately obvious, though by Proposition~\ref{p:action}~$(b)(iii)$, $\ff$ has a normal subgroup $C_r\times \PSL(d,q)$ which itself lies in $\PIT$. It turns out that $\ff=(C_r\times\PGL(d,q)).\langle \phi\rangle$ if and only if $r$ does not divide $d$. 

To see this, we note that in all cases 
\[
\ff=\GammaL(d,q)/Y=(\GL(d,q).\langle \phi\rangle)/Y=(\GL(d,q)/Y).\langle \phi\rangle,
\]
and the map $f:  A \to (\det(A)\langle \omega^r\rangle,ZA)$ defines a group homomorphism $f:\GL(d,q)\to E\times \PGL(d,q)$. Thus $\GL(d,q)/{\rm Ker}(f)$ is isomorphic to a subgroup of $ E\times \PGL(d,q)\cong  C_r\times \PGL(d,q)$. Now  ${\rm Ker}(f)$  consists of the scalar matrices $\lambda I$ such that $\det(\lambda I)=\lambda^d\in \langle \omega^r\rangle.$
Hence, if $r$ does not divide $d$, then ${\rm Ker}(f)=Y$, and, since  $|\GL(d,q)/Y|=|Z/Y|\cdot |\GL(d,q)|/Z|=|C_r\times \PGL(d,q)|$, it follows that $f$ is onto and we have $\GL(d,q)/Y\cong C_r\times \PGL(d,q)$. On the other hand if $r$ divides $d$ then ${\rm Ker}(f)=Z$ and $(\GL(d,q))f= 1\times \PGL(d,q)$, and $\GL(d,q)/Y$ does not have the structure of a direct product. 
}
\end{remark}

\subsection{Line 4: plinth $\PSU(3,q)$}
Let $q=q_0^a$ where $q_0$ is a prime. Let $\langle \omega \rangle= \mathbb{F}_{q^2}^*$ and let $\overline{\alpha}=\alpha^q$ for $\alpha$ in $\mathbb{F}_{q^2}$.
Consider $V={(\mathbb{F}_{q^2})}^3$, the space of $3$-dimensional row vectors, as the natural module for $\SU(3,q)$, and let $(\cdot, \cdot)$ be the corresponding non-degenerate unitary form on $V$. Then $V$ has a basis  $\{e, x, f\}$ such that
$$ 
(e,e)=(f,f)=(x,e)=(x,f)=0, \text{ } (x,x)=(e,f)=1,
$$
and we write the elements of  $\SU(3,q)$ relative to this basis.
Then
$$
\SU(3,q)=\{ A \in \SL(3,q^2) \mid AP\bar{A}^T=P\} \text{ where } P=\begin{pmatrix}
0 & 0&1 \\0&1&0\\1&0&0
\end{pmatrix}.
$$ 
Let $\mathcal{T}$ be the set of all $v \in V^*$ such that $(v,v)=0.$ Let $\binom{\mathcal{T}}{1}$ denote the set of totally isotropic 1-subspaces of $V$, so $\langle v \rangle \in \binom{\mathcal{T}}{1}$ if and only if $v \in \mathcal{T}$.

Let $Z$ be the subgroup of all scalar matrices of $\GL(3,q^2).$
We define $\GammaU(3,q)$ to be the group of semisimilarities of the unitary space $V$, that is to say, $g \in \GammaU(3,q)$ if and only if $g \in \GammaL(3,q^2)$ and there exists $\lambda \in \mathbb{F}_{q^2}^*$ and $\alpha \in \Aut (\mathbb{F}_{q^2})$ such that 
$$
(v g, u g)= \lambda (v,u)^{\alpha} \text{ for all } v,u \in V.
$$
By \cite[\S 2.3]{KL}, $\GammaU(3,q) = (\GU(3,q)Z) \rtimes \langle \phi \rangle $ where   $\phi$ is the field automorphism $\lambda\to \lambda^{q_0}$ of $F$ which acts on vectors $v=(\lambda_1,\lambda_2,\lambda_3)\in V$ by $\phi:v\to (\lambda_1^{q_0},\lambda_2^{q_0},\lambda_3^{q_0})$.

Let $r$ be a prime diving $q^2-1$, and let $E=\{ \omega^i\langle \omega^r\rangle \mid 0\leq i\leq r-1\}$. 
Consider the set 
\begin{equation}\label{e:defOm2}
\Omega:=\{ \varepsilon v\mid \varepsilon\in E, v\in \mathcal{T}\}.
\end{equation}
 The elements of $\Omega$ have non-unique representations as in Lemma \ref{lem:rep} with $F= \mathbb{F}_{q^2}.$
We define an action of $\GammaU(3,q)$ on $\Omega$ as follows. Each element of $\GammaU(3,q)$ is uniquely represented as a product $\phi^i A$ where $A \in \GU(3,q) Z$ and $0 \le i < 2a$, and we define
\begin{equation}\label{e:actionU}
    \phi^i A:\varepsilon v\mapsto \varepsilon^{\phi^i} (v^{\phi^i}A), \ \mbox{for $A\in\GU(3,q)Z$, $0\leq i<2a$, and
    $\varepsilon v\in \Omega$.}
\end{equation}
We prove that this gives a family of proper innately transitive groups with plinth $\PSU(3,q).$ First, we make a formal description of this construction. Then we state Proposition \ref{p:actionU} recording the properties of the action of $\GammaU(3,q)$ on $\Omega$ via \eqref{e:actionU} similar to the ones we proved for the linear case in Proposition \ref{p:action}. We omit the proof of Proposition \ref{p:actionU} since it is fully analogous to the proof of Proposition \ref{p:action}.

\begin{construction}\label{con:psu}
Let  $q \ge 3$ and let $r$ be a prime dividing $(q^2-1)/(3,q+1)$. Let $Y=\langle \omega^r I\rangle$, and let $\Omega$ be the set defined in \eqref{e:defOm2}. Then the natural induced action of $\GammaU(3,q)$ on $\Omega$ given by \eqref{e:actionU} yields a permutation group $\ff=\GammaU(3,q)/Y$ such that $\ff^\Omega \in\PIT$ with plinth $M=\SU(3,q)Y/Y\cong \PSU(3,q)$. The group $\ff$ stabilises the partition of $\Omega$
\[
\Sigma=\{\sigma(U) \mid U\in  \tbinom{\mathcal{T}}{1}\},\ \text{ where }   \sigma(U)=\{\omega^i\langle \omega^r\rangle u\mid 0\leq i<r\} \text{ if } U=\langle u \rangle.
\]
\end{construction}

\begin{proposition}\label{p:actionU}
Let $r$ be a prime dividing $q^2-1$, and let $q, \Omega, \Sigma, Y$ and $M$ be as in Construction~$\ref{con:psu}$.
\begin{enumerate}
    \item[$(a)$] Then the map given in \eqref{e:actionU} defines a transitive permutation action of $\GammaU(3,q)$  on $\Omega$ with kernel $Y$, and $\GammaU(3,q)$  leaves invariant the partition $\Sigma$ of $\Omega$.
    \item[$(b)$] The following are equivalent for the induced group $\ff:=\GammaU(3,q)/Y$ on $\Omega$:
        \begin{enumerate}
        \item[$(i)$] $\ff^\Omega$ is innately transitive, 
        \item[$(ii)$] $r$ divides $(q^2-1)/(3,q+1)$, 
        \item[$(iii)$] $\ff^\Omega\in\PIT$ with plinth $M = \SU(3,q)Y/Y\cong\PSU(3,q)$ and  $C:=\C_{\ff}(M)\cong C_r$.
        \end{enumerate}
\item[$(c)$] If the conditions in part $(b)$ hold, then $\C_{\Sym(\Omega)}(M)=C=Z/Y$ and $N_{\Sym(\Omega)}(M)=\GammaU(3,q)/Y= \ff.$
  \end{enumerate}
\end{proposition}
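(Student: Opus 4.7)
The strategy is to mirror the proof of Proposition~\ref{p:action}, adapting each argument from the linear setting to the unitary one by replacing $\SL(d,q)$ with $\SU(3,q)$, $\binom{V}{1}$ with $\binom{\mathcal{T}}{1}$, and exploiting that $\PSU(3,q)$ is $2$-transitive on $\binom{\mathcal{T}}{1}$ when $q\geq 3$.

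For part $(a)$, I would verify the identity $(\phi^i A)(\phi^j B) = \phi^{i+j}(A^{\phi^j} B)$ in $\GammaU(3,q)$ and then compute, for $\varepsilon v \in \Omega$, that the formula in \eqref{e:actionU} respects this product: both $((\varepsilon v)(\phi^i A))(\phi^j B)$ and $(\varepsilon v)\bigl((\phi^i A)(\phi^j B)\bigr)$ reduce to $\varepsilon^{\phi^{i+j}}(v^{\phi^{i+j}} A^{\phi^j} B)$, exactly as in the linear case. Transitivity of $\GammaU(3,q)$ on $\Omega$ follows from transitivity of $\SU(3,q)$ on $\mathcal{T}$ (Witt's theorem) together with the fact that $Z$ acts transitively on each fibre $\{\omega^i\langle\omega^r\rangle u\mid 0\leq i<r\}$. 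To compute the kernel: any $x\in\GammaU(3,q)$ acting trivially must fix every totally isotropic $1$-space setwise and hence lies in $Z$ (since $\PSU(3,q)$ acts faithfully on $\binom{\mathcal{T}}{1}$); the scalar $\omega^i I$ then satisfies $(\varepsilon v)\omega^i I = (\omega^i\varepsilon) v = \varepsilon v$ for all $\varepsilon v\in\Omega$ if and only if $r\mid i$. Hence the kernel is $Y$, and invariance of $\Sigma$ is immediate from $\GammaU(3,q)$ permuting $\binom{\mathcal{T}}{1}$.

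For part $(b)$, the subgroup $\SU(3,q)$ is transitive on $\Omega$, so its image modulo $Y$ is a transitive normal subgroup of $\ff$. The key calculation is when this image is simple: it is isomorphic to $\PSU(3,q)$ precisely when $Z\cap\SU(3,q) = Y\cap\SU(3,q)$, that is when $(3,q+1) = |Z\cap\SU(3,q)|$ divides $|Y| = (q^2-1)/r$, equivalently $r\mid (q^2-1)/(3,q+1)$. If this fails, then the quotient $(Z\cap\SU(3,q))Y/Y$ is a nontrivial intransitive normal subgroup of $\ff$ contained in every transitive subgroup (it lies in the socle); since $Z/Y\cong C_r$ has prime order, this forces the unique minimal normal subgroup to be intransitive and $\ff$ fails to be innately transitive. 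Conversely, when the divisibility holds, $M:=\SU(3,q)Y/Y\cong\PSU(3,q)$ is a simple transitive normal subgroup, so $\ff^\Omega\in\PIT$ with plinth $M$, and $Z/Y\cong C_r$ centralises $M$ and is intransitive, giving $C_{\ff}(M)\geq Z/Y$; equality follows from the computation in $(c)$.

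For part $(c)$, let $\alpha=\langle\omega^r\rangle e\in\Omega$, a point in the block $\sigma=\sigma(\langle e\rangle)$. The stabiliser $\SU(3,q)_{\sigma}$ is $Q\rtimes L$ with $L$ cyclic of order $q^2-1$ acting on the non-zero scalar multiples of $e$ via the cyclic multiplicative action on $\mathbb{F}_{q^2}^*$, so $M_\alpha$ has index $r$ in $M_\sigma$ and is a proper normal subgroup. By \cite[Theorem~3.2(i)]{PS}, $\C_{\Sym(\Omega)}(M)\cong M_\sigma/M_\alpha\cong C_r$, and since $Z/Y\leq C\leq\C_{\Sym(\Omega)}(M)$ with $|Z/Y|=r$, equality holds throughout. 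For the normaliser, note that $\GammaU(3,q)/Y\leq N_{\Sym(\Omega)}(M)$ by construction, while on the other hand, since $M^\Sigma$ is $2$-transitive, $|N_{\Sym(\Omega)}(M)/C| \leq |N_{\Sym(\Sigma)}(M^\Sigma)| = |\PGammaU(3,q)|$ by \cite[Table~7.4]{Cam99}, and this upper bound equals $|(\GammaU(3,q)/Y)/C|$. Hence equality holds, proving $N_{\Sym(\Omega)}(M) = \GammaU(3,q)/Y$. The main subtlety to be careful about is the bookkeeping around $Z$, $Y$, and $Z\cap \SU(3,q)$ in part $(b)$, since the order of the centre of $\SU(3,q)$ depends on $(3,q+1)$, which is the unitary analogue of the factor $(d,q-1)$ appearing in the linear case.
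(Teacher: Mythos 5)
Your proposal is correct and follows exactly the route the paper intends: the paper omits the proof of this proposition, stating it is ``fully analogous'' to the proof of Proposition~\ref{p:action}, and your argument is precisely that adaptation (with $\binom{\mathcal{T}}{1}$ in place of $\binom{V}{1}$, $|Z\cap\SU(3,q)|=(3,q+1)$ in place of $(d,q-1)$, and $|\PGaU(3,q)|$ in the normaliser bound). The level of detail, including the appeals to \cite[Theorem 3.2(i)]{PS} and \cite[Table 7.4]{Cam99}, matches the paper's linear-case proof.
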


In the following lemma we determine the subgroups of $\overline{G}$ in Construction~\ref{con:psu} that yield special pairs and those that lie in $\PIT_3$.
All such subgroups must lie in $\PIT$, and by  Lemma~\ref{lem3}(b), they must contain $C\times M$.

\begin{lemma}\label{lem:sppirSU}
Let $\overline{G}, M$ be as in Construction $\ref{con:psu}$ and let $C$ be as in Proposition $\ref{p:actionU}$. 
Assume that $C \times M \le G \le \ff.$ Let  $\alpha=\langle \omega^r\rangle e \in \Omega$ and let $R=(\SU(3,q)_{\alpha})^\Sigma$.
\begin{enumerate}
\item[$(a)$]  Then $(G^\Sigma,R)$ is a special pair if and only if the conditions in Line $4$ of Table $\ref{t:allr}$ hold for $G^{\Sigma}$.
    
    \item[$(b)$] If $(G^\Sigma,R)$ is a special pair (so in particular $r$ divides $q^2-1$), then either
    \begin{enumerate}
        \item[(i)]  $r$ is odd and $r$ divides $q-1$, and in this case $G^\Omega \in\PIT_3$; or
        \item[(ii)] $r$ divides $q+1$, and $G^\Omega$ has rank $4$.
    \end{enumerate}
\end{enumerate}
\end{lemma}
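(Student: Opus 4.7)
For part (a), I would argue that the claim is essentially a direct consequence of Theorem~\ref{t:special}. The $G$-action on $\Sigma$ is permutationally isomorphic to the natural action of $G^\Sigma \leq \PGaU(3,q)$ on totally isotropic $1$-subspaces of $V$, and the stabiliser of $\alpha$ in $\SU(3,q)$ (acting via \eqref{e:actionU}) is precisely the group $\widehat{R} = Q \rtimes \langle h^r \rangle$ identified in the proof of Theorem~\ref{t:special}. Hence $R = (\SU(3,q)_\alpha)^\Sigma = \widehat{R}/Z_0$ is the unique $G_\sigma^\Sigma$-invariant normal subgroup of index $r$ in $M_\sigma^\Sigma$, and Theorem~\ref{t:special} yields the desired equivalence with the Line~$4$ conditions.

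For part (b), I would apply Lemma~\ref{lem4}, reducing the rank-$3$ question to the orbit structure of $G_{\alpha,\sigma'}$ on $\sigma'$ for some $\sigma' \in \Sigma \setminus \{\sigma\}$: since $G_\alpha$ is transitive on $\Sigma \setminus \{\sigma\}$, the $G_\alpha$-orbits on $\Omega \setminus \sigma$ correspond to the $G_{\alpha,\sigma'}$-orbits on $\sigma'$. Take $\sigma' = \sigma(\langle f \rangle)$ and $\beta = \langle \omega^r \rangle f$, and identify $\sigma'$ with $\mathbb{Z}/r\mathbb{Z}$ via $\omega^i \langle \omega^r \rangle f \mapsto i$. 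Working in the preimage $\widehat{G} \leq \GammaU(3,q)$, any element stabilising both $\sigma$ and $\sigma'$ must preserve each of $\langle e \rangle$, $\langle f \rangle$, and $\langle x \rangle = \langle e, f \rangle^\perp$, so it has the form $\phi^j \lambda A$ with $A = \mathrm{diag}(a, b_0, a^{-q})$, $b_0^{q+1} = 1$, $a \in \mathbb{F}_{q^2}^*$. Imposing that $\alpha$ be fixed forces $\lambda a \in \langle \omega^r \rangle$, and a direct computation using Lemma~\ref{lem:rep} then shows that such an element acts on $\sigma'$ as the affine map $i \mapsto q_0^j i - (q+1)t \pmod r$, where $a = \omega^t$.

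Consequently, $G_{\alpha,\sigma'}$ induces on $\sigma' \cong \mathbb{Z}/r\mathbb{Z}$ an affine subgroup of $\AGL(1,r)$ whose translation part is $(q+1)\mathbb{Z}/r\mathbb{Z}$ and whose multiplicative part is $\langle q_0^{j_0} \rangle \leq (\mathbb{Z}/r\mathbb{Z})^*$, where $j_0$ is the smallest positive exponent with $\phi^{j_0} \in \widehat{G}$. The Line~$4$ conditions $o_r(q_0) = r-1$ and $(j_0, r-1) = 1$ together force $\langle q_0^{j_0} \rangle = (\mathbb{Z}/r\mathbb{Z})^*$. In case (i), where $r$ is odd and $r \mid q-1$, we have $\gcd(q+1, r) = 1$, so the translation subgroup is all of $\mathbb{Z}/r\mathbb{Z}$ and $G_{\alpha,\sigma'}$ is transitive on $\sigma'$; hence $G^\Omega \in \PIT_3$ by Lemma~\ref{lem4}. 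In case (ii), where $r \mid q+1$ (covering also $r = 2$, since $q$ is then odd), the translation subgroup is trivial, so $\{\beta\}$ is a $G_{\alpha,\sigma'}$-orbit, while $(\mathbb{Z}/r\mathbb{Z})^*$ acts transitively on $\sigma' \setminus \{\beta\}$; hence $G_{\alpha,\sigma'}$ has exactly two orbits on $\sigma'$, and $G^\Omega$ has rank $4$. The two cases are exhaustive since $r \mid q^2 - 1$ is prime. The principal obstacle will be the careful bookkeeping in the affine-action computation and verifying that the Line~$4$ constraints translate precisely into the generation of $(\mathbb{Z}/r\mathbb{Z})^*$ by $q_0^{j_0}$; once these are in place, the rest is a clean case analysis on whether $r$ divides $q+1$.
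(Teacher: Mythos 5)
Your proposal is correct and takes essentially the same route as the paper: part (a) by identifying $R$ with the subgroup $\widehat{R}/Z_0$ from the proof of Theorem~\ref{t:special} (the paper simply declares this analogous to the linear case), and part (b) by applying Lemma~\ref{lem4} with $\sigma'=\sigma(\langle f\rangle)$ and computing that $\GammaU(3,q)_{\alpha,\sigma'}$ consists of the elements $\phi^i\,\lambda\,\mathrm{diag}(z,y,z^{-q})$ with $\lambda z\in\langle\omega^r\rangle$, $y^{q+1}=1$, whose action on $\sigma'$ is exactly your affine map $i\mapsto q_0^i\, i-(q+1)t \pmod r$, leading to the same dichotomy $r\mid q-1$ (translations fill $\mathbb{Z}/r\mathbb{Z}$, rank $3$) versus $r\mid q+1$ (point $\beta$ fixed, rank $4$). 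The one step you defer to ``bookkeeping'' --- that for odd $r\mid q+1$ the multiplicative part $\langle q_0^{j_0}\rangle=(\mathbb{Z}/r\mathbb{Z})^*$ is actually induced by elements lying in $\widehat{G}_{\alpha,\sigma'}$, which is what yields rank exactly $4$ rather than merely rank at least $4$ --- is settled in the paper by taking the element $g$ from the unitary case of the proof of Theorem~\ref{t:special} and moving it into $\widehat{G}_{\alpha,\sigma'}$ using the $2$-transitivity of $M^\Sigma$ and the transitivity of $C$ on $\sigma$; with that observation supplied, your argument is complete.
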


\begin{proof}
  $(a)$ The proof is fully analogous to the proof of Lemma \ref{lem:sppirSL}$(a)$, and so is omitted.

     $(b)$  Suppose now that $(G^\Sigma,R)$ is a special pair so, by part (a), the conditions in Line $4$ of Table $\ref{t:allr}$ hold for $G^{\Sigma}$, and in particular $r$ divides $(q^2-1)/(3,q+1)$. Hence, by Proposition~\ref{p:actionU}$(b)$, $\overline{G}^\Omega\in\PIT$
    with   plinth $M=\SU(3,q)Y/Y$ and  $C=\C_{\overline{G}}(M)\cong C_r$. Since $C\times M\leq G\leq \overline{G}$, it follows that $G^\Omega\in\PIT$ with the same plinth $M$ and with $\C_G(M)=C\cong C_r$.
     
     Now $\alpha=\langle \omega^r\rangle e\in \Omega$,  $\alpha$ lies in the block $\sigma=\sigma(\langle e\rangle)$ of $\Sigma$, and  $R:=(\SU(3,q)_{\alpha})^\Sigma$.
     Let  $\sigma'=\sigma(\langle f\rangle)$, so $\sigma'\in\Sigma\setminus\{\sigma\}$. By Lemma~\ref{lem4}, $G^\Omega$ has rank $3$ if and only if $G_{\alpha,\sigma'}$ is transitive on $\sigma'$. 
     We claim that this transitivity condition holds if and only if $r$ is odd and divides $q-1$, that is to say, if and only if part (b)(i) holds.

   Let $\widehat{G}$ be the full preimage of $G$ in $\GammaU(3,q)$. Since $G$ and $\widehat{G}$ induce the same group on $\Omega$, 
   $G_{\alpha,\sigma'}$ is transitive on $\sigma'$ if and only if $\widehat{G}_{\alpha,\sigma'}$ is transitive on $\sigma'$.
   
    Note that each element of $\GammaU(3,q)_{\alpha, \sigma'}$ also fixes $\langle e, f\rangle^\perp=\langle x\rangle$, and therefore $\GammaU(3,q)_{\alpha, \sigma'}$  consists of all elements $u$ of the form
    \begin{equation}
    \phi^i \cdot \lambda I \cdot
    \begin{pmatrix}\label{e:ULelt}
    z & 0 & 0 \\
    0 & y & 0 \\
    0 & 0  &  z^{-q}
    \end{pmatrix}, \mbox{where $0 \le i< 2a$, $z, y, \lambda \in \mathbb{F}_{q^2}$, $\lambda \cdot  z \in \langle \omega^r\rangle, y^{q+1}=1$.}
    \end{equation}
    Since $\lambda \cdot  z \in \langle \omega^r\rangle$, we have $\lambda \cdot  z^{-q} \in z^{-q-1}\langle \omega^r\rangle$. 
    Thus this element $u$ maps the point $\beta= \langle \omega^r\rangle f\in \sigma'$ to $\beta u=z^{-q-1}\langle \omega^r\rangle f$.
    Assume first that $r$ does not divide $q+1$, or equivalently, that $r$ is odd and  $r \mid (q-1)$. Then $(Z\,\SU(3,q))_{\alpha, \sigma'}$, and hence also $\widehat{G}_{\alpha, \sigma'}$, contains the element $u$ as above with $i=0, \lambda=\omega^{-1}$, $z=\omega$, $y=\omega^{q-1}$.  This element $u$ maps an arbitrary point $\omega^j \langle \omega^r\rangle f\in\sigma'$ to $\omega^{j-q-1}\langle \omega^r\rangle f = \omega^{j-2}\langle \omega^r\rangle f$. Hence the action of $u$ on $\sigma'$ is equivalent to the action of $\omega^{-2}$ by multiplication on $F^*/\langle \omega^r\rangle$. Since $r$ is odd it follows that  $\langle \omega^{-2}\rangle$ is transitive on $F^*/\langle \omega^r\rangle$, and hence $\widehat{G}_{\alpha, \sigma'}$ is transitive on $\sigma'$. This completes the proof of  (b)(i): if $r$ is odd and $r$ divides $q-1$ then $G^\Omega\in\PIT_3$.

    As we noted above, the condition `$r$ odd and $r$ divides $q-1$' is equivalent to `$r$ does not divide $q+1$'. So assume now that $r$ does divide $q+1$.  
    Note $\sigma'$ consists of the $r$ points $\beta_j:=\omega^j\langle \omega^r\rangle f$ of $\Omega$, for $0\leq j\leq r-1$, and  a typical element $u\in G_{\alpha,\sigma'}$ is of the form given in \eqref{e:ULelt} and so maps a point $\beta_j$ to $\beta_ju=(\omega^j)^{\phi^i}z^{-q-1} \langle \omega^r\rangle f$. Since $r$ divides $q+1$, we have 
    $z^{-q-1} \in \langle \omega^r\rangle$. Also $(\omega^j)^{\phi^i} = \omega^{jq_0^i}$, where $q$ is a power of the prime $q_0$. Thus $\beta_ju= \omega^{jq_0^i} \langle \omega^r\rangle f$. In particular $u$ fixes $\beta_0=\beta$, so $G_{\alpha,\sigma'}$ fixes $\beta_0$, and the conditions in the last column of Line $4$ of Table~\ref{t:allr}  ensure that the group $G_{\alpha,\sigma'}$ acts transitively on the remaining $r-1$ points of $\sigma'\setminus\{\beta_0\}$. To see this explicitly we refer to the last paragraph of the proof of the unitary case of Theorem~\ref{t:special}, and in particular to the element $g$ which in our notation lies in $G_\sigma\cap \PGU(3,q)$. Let $\widehat{g}\in\widehat{G}_\sigma$ such that $\widehat{g}^\Sigma = g$. Since $M\leq G$ and $M^\Sigma$ is $2$-transitive, we may assume that $g$ lies in $G_{\sigma,\sigma'}$, that is to say, $\widehat{g}\in\widehat{G}_{\sigma,\sigma'}$. 
    Further, since $C<G_{\sigma,\sigma'}$ and $C$ is transitive on $\sigma$, we may assume further that $\widehat{g}\in\widehat{G}_{\alpha,\sigma'}$. Then the same argument given for the transitivity of $\langle g\rangle$ on $\sigma\setminus\{\alpha\}$, in the proof of the unitary case of Theorem~\ref{t:special}, also shows that  $\langle \widehat{g}\rangle$ is transitive on $\sigma'\setminus\{\beta_0\}$, and hence that $G_{\alpha,\sigma'}$ is transitive on $\sigma'\setminus\{\beta_0\}$.
    Now, arguing as in the proof of Lemma~\ref{lem4}, the number of $G_\alpha$-orbits in $\Omega\setminus\sigma$ is equal to the number of $G_{\alpha,\sigma'}$-orbits in $\sigma'$, and we have just shown that this number is two. Since the $G_\alpha$-orbits in $\sigma$ are $\{\alpha\}$ and $\sigma\setminus \{\alpha\}$ it follows that $G_\alpha$ has exactly four orbits in $\Omega$ and hence that $G^\Omega$ has rank $4$. This completes the proof of  (b)(ii): if $r$ divides $q+1$ then $G^\Omega$ has rank $4$.
    \end{proof}

The proof of the following lemma is fully analogous to the proof of Lemma \ref{lem:pslaction}, and we omit it. It shows that all properly innately transitive groups corresponding to special pairs in Line 4 of Table~\ref{t:allr} arise from Construction~\ref{con:psu}.

\begin{lemma}\label{lem:psUaction}
Suppose that  $G_0 \le \Sym(\Omega_0)$ is such that $G_0^{\Omega_0} \in \PIT$ with plinth $M_0 \cong \PSU(3,q)$, and that $\widehat{\varphi}(G_0^{\Omega_0})=(G_0^{\Sigma_0},R_0^{\Sigma_0})$ (for $\Sigma_0$ the orbit set of $C_0=\C_{\Omega_0}(M_0)$) is a special pair satisfying the conditions of Line $4$ of Table $\ref{t:allr}$. 
Then $G_0^{\Omega_0}$ is permutationally isomorphic to a subgroup $G^\Omega$ where $C \times M \le G\le \ff$, and 
$\ff, M, r,$ and $\Omega$ are as in Construction $\ref{con:psu}$.
\end{lemma}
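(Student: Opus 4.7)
The plan is to mimic the proof of Lemma~\ref{lem:pslaction}, replacing the linear ingredients with their unitary counterparts; everything goes through because Proposition~\ref{p:actionU} is an exact unitary analogue of Proposition~\ref{p:action}, and Construction~\ref{con:psu} plays the role of Construction~\ref{con:psl}.

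First I would set $V=(\mathbb{F}_{q^2})^3$ with the basis $\{e,x,f\}$ and unitary form from Construction~\ref{con:psu}, and note that since $M_0\cong \PSU(3,q)$ the quasiprimitive action of $G_0^{\Sigma_0}=G_0/C_0$ on $\Sigma_0$ is permutationally isomorphic to its natural action on the set $\binom{\mathcal{T}}{1}$ of totally isotropic $1$-spaces of $V$; so we may identify $\Sigma_0$ with $\binom{\mathcal{T}}{1}$, and, after moving $\sigma_0$ by an element of $M_0$ if necessary, we may assume $\sigma_0=\langle e\rangle$. The Line~$4$ hypotheses give that $r:=|(M_0)_{\sigma_0}/R_0|$ is a prime dividing $(q^2-1)/(3,q+1)$ with $o_r(q_0)=r-1$, which is precisely condition $(b)(ii)$ of Proposition~\ref{p:actionU}.

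Next I would invoke Proposition~\ref{p:actionU} to conclude that $M=\SU(3,q)Y/Y\cong \PSU(3,q)$ acts faithfully and transitively on the set $\Omega$ of Construction~\ref{con:psu}, and to identify the stabiliser of $\alpha=\langle\omega^r\rangle e\in\Omega$ with the unique index-$r$ normal subgroup of $M_{\sigma(\langle e\rangle)}$; by the uniqueness part of Theorem~\ref{t:special} this subgroup must coincide with the image of $R_0$ under our identification of $M_0$ with $M$. Hence the actions of $M_0$ on $\Omega_0$ and of $M$ on $\Omega$ are permutationally isomorphic (both are equivalent to right multiplication on the coset space $[M_0:R_0]$; see \cite[Lemma~2.8]{PS}), so we may assume $\Omega_0=\Omega$ and $M_0=M$.

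Finally, since $G_0$ normalises $M_0=M$, we obtain $G_0\leq N_{\Sym(\Omega)}(M)=\overline{G}$ by Proposition~\ref{p:actionU}$(c)$. By \eqref{e:cent} the centraliser $\C_{G_0}(M_0)$ is nontrivial, and since Proposition~\ref{p:actionU}$(c)$ gives $\C_{\Sym(\Omega)}(M)=C\cong C_r$ of prime order, we conclude $\C_{G_0}(M_0)=C$, whence $C\times M\leq G_0\leq\overline{G}$ as required. The only step requiring a small extra observation beyond copying the linear argument is the identification of $R_0$ with the image of $\SU(3,q)_\alpha$ in $M$; this is where Theorem~\ref{t:special}'s uniqueness clause is essential, since in the unitary setting $(M)_{\sigma(\langle e\rangle)}$ could a priori admit several index-$r$ normal subgroups, but the theorem rules this out.
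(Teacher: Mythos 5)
Your proposal is correct and follows exactly the route the paper intends: the paper omits this proof, stating it is fully analogous to that of Lemma~\ref{lem:pslaction}, and your argument is precisely that analogue, using Proposition~\ref{p:actionU} in place of Proposition~\ref{p:action} and the uniqueness of $R$ from Theorem~\ref{t:special} to identify $R_0$ with the point stabiliser $\SU(3,q)_\alpha$ (the same identification the linear-case proof relies on). Nothing further is needed.
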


\section{Properly innately transitive groups with symplectic or ${\rm Ree}$ plinths 
} \label{sect:spree}

In this section we consider groups in $\PIT$ giving rise to the special pair
in Lines 5 and 6 of Table \ref{t:allr}.

\subsection{Line $5$: plinth $\Sp(2d,2)$}
Let $M=\Sp(2d,2)$ with $d \ge 3$ and let $V=(\mathbb{F}_2)^{2d}$ be the natural module for $M.$ Let $(\cdot, \cdot)$ be the corresponding bilinear alternating form that is also symmetric since  $V$ is over a field of even characteristic. Here $\Sigma$ is the set of quadratic forms $Q$  on $V$ of type $\varepsilon\in\{+,-\}$ such that $$(u,v)=Q(u+v)- Q(u) - Q(v) \text{ for all } u,v \in V.$$ 
For $g\in M$ the action on $\Sigma$ is defined by $Q^g(u)=Q(ug^{-1})$ for all $u \in V.$

Let $\sigma=Q$ be a quadratic form in $\Sigma.$
Then $M_\sigma= O(V,Q)$ is the orthogonal group preserving $Q$ on $V$, which is isomorphic to $O^{\varepsilon}(2d,2).$ Therefore $M_\sigma$ is almost simple with socle isomorphic to $\Omega^{\varepsilon}(2d,2)$. Let $R$ be this socle, so that $r=|M_\sigma/R|=2$. 
Note that $N_{\Sym(\Sigma)}(M^\Sigma)=M^\Sigma$, and so if $(X^\Sigma,R)$ is a special pair as in Line $5$ of Table \ref{t:allr}, then $X^\Sigma=M^\Sigma$.

In this case, we  use Construction~\ref{con:gen}. So, $\Omega = \{Rx \mid x \in M\}$ with $M$ acting  by right multiplication,  $C=\C_{\Sym(\Omega)}(M)$, and we identify $\Sigma$ with the set of $C$-orbits.
%
%
The following lemma follows directly from Lemma \ref{lem3} and Corollary~\ref{lem:uniq} since $N_{\Sym(\Sigma)}(M^\Sigma)=M^\Sigma$. 
\begin{lemma}\label{lem:spaction}
With the notation of Construction~$\ref{con:gen}$ for $M=\Sp(2d,2)$, $H^\Omega\in\PIT$ and $\widehat{\varphi}(H^\Omega)=(M^\Sigma, R)$. Moreover, if $G^{\Omega_0} \in \PIT$ with plinth isomorphic to $\Sp(2d,2)$ and $\widehat{\varphi}(G^{\Omega_0})\approx (M^\Sigma, R)$, for some quadratic form $Q$, then $G^{\Omega_0}$ is permutationally isomorphic to $H^{\Omega}$. 
\end{lemma}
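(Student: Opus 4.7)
The plan is to deduce both assertions as immediate consequences of Lemma~\ref{lem3} and Corollary~\ref{lem:uniq}, as the statement itself indicates. First I would verify that the hypotheses of Construction~\ref{con:gen} (and hence of Lemma~\ref{lem3}) are satisfied by the data of Line $5$ of Table~\ref{t:allr}: here $M^\Sigma=\Sp(2d,2)$ is almost simple and $2$-transitive on $\Sigma$ with nonabelian simple socle, $R\cong\Omega^{\varepsilon}(2d,2)$ is a proper, nontrivial, $M_\sigma$-invariant normal subgroup of $M_\sigma=O^{\varepsilon}(2d,2)$, and $M_\sigma/R\cong C_2$ is elementary abelian. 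Lemma~\ref{lem3}(a) then yields directly that $H^\Omega\in\PIT$ with plinth $M^\Omega\cong M$, and that $\widehat{\varphi}(H^\Omega)\approx(M^\Sigma,R)$. Under the identification of $\Sigma$ with the set of $C$-orbits in $\Omega$ fixed in the paragraph preceding the lemma, this equivalence becomes the equality $\widehat{\varphi}(H^\Omega)=(M^\Sigma,R)$.

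For the uniqueness assertion, suppose $G\le\Sym(\Omega_0)$ with $G^{\Omega_0}\in\PIT$, plinth $M_0\cong\Sp(2d,2)$, and $\widehat{\varphi}(G^{\Omega_0})\approx(M^\Sigma,R)$. I would apply Lemma~\ref{lem3}(b) (playing the role of $G_0$ in that lemma). This produces a permutational isomorphism $(f,\psi)$ from $N_{\Sym(\Omega_0)}(M_0)^{\Omega_0}$ onto $N_{\Sym(\Omega)}(M)^\Omega$ sending $M_0$ to $M$ and $\C_{\Sym(\Omega_0)}(M_0)$ to $C$, and such that $G^f$ satisfies $C\times M\le G^f\le N_{\Sym(\Omega)}(M)$, with $G^{\Omega_0}$ permutationally isomorphic to $(G^f)^\Omega$. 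The hypothesis $N_{\Sym(\Sigma)}(M^\Sigma)=M^\Sigma$, which is recorded in the paragraph preceding the lemma and holds because $\Sp(2d,2)$ with $d\geq3$ has trivial outer automorphism group (so any element of $\Sym(\Sigma)$ normalising $M^\Sigma$ acts on it by an inner automorphism and must therefore already lie in $M^\Sigma$), allows me to invoke Corollary~\ref{lem:uniq}. That corollary forces $N_{\Sym(\Omega)}(M)=C\times M=H$, and hence $G^f=H$. Consequently $G^{\Omega_0}$ is permutationally isomorphic to $H^\Omega$, as required.

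I do not anticipate any genuine obstacle. Since $r=|M_\sigma/R|=2$ in Line $5$, the transitivity condition Definition~\ref{def1}(c) is vacuous, so verifying that $(M^\Sigma,R)$ is a special pair reduces to the structural facts already recorded about $M_\sigma$. All of the work is really in Lemma~\ref{lem3} and Corollary~\ref{lem:uniq}; the proof of Lemma~\ref{lem:spaction} is merely the bookkeeping that matches the hypotheses of those two results with the concrete data of Construction~\ref{con:gen} specialised to $M=\Sp(2d,2)$.
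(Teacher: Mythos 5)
Your proof is correct and follows the same route as the paper, whose entire argument is that the lemma ``follows directly from Lemma~\ref{lem3} and Corollary~\ref{lem:uniq} since $N_{\Sym(\Sigma)}(M^\Sigma)=M^\Sigma$''; your extra bookkeeping (checking the special-pair conditions for Line 5 and applying Lemma~\ref{lem3}(b) before Corollary~\ref{lem:uniq}) is exactly what the paper leaves implicit. The only micro-quibble is that deducing $N_{\Sym(\Sigma)}(M^\Sigma)=M^\Sigma$ from $\mathrm{Out}(\Sp(2d,2))=1$ also uses that $\C_{\Sym(\Sigma)}(M^\Sigma)=1$ (which holds since $M_\sigma=O^{\varepsilon}(2d,2)$ is self-normalising), a routine detail the paper likewise omits.
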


%
%
%

The following result \cite[Theorem 3]{DYE} is useful for the characterisation of $\Omega^{\varepsilon}(V,Q).$

\begin{lemma} \label{DicksonInv}
Let $Q$ be a nondegenerate quadratic form on $V$ and let $I$ be the identity element of $\GL(V)$. Then an element $h \in O(V,Q)$ lies in $\Omega(V,Q)$ if and only if  $\mathrm{rk}(h-I)$ (the rank of the linear transformation) is even.
\end{lemma}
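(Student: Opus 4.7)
The plan is to identify the map $\delta: O(V,Q) \to \mathbb{F}_2$ defined by $\delta(h) \equiv \mathrm{rk}(h-I) \pmod{2}$ with the Dickson invariant $D: O(V,Q) \to \mathbb{F}_2$, whose kernel by definition is $\Omega(V,Q)$; the lemma is then immediate.

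First, I would check that $\delta(r_v) = 1$ for every reflection $r_v$ associated with an anisotropic vector $v$ (so $Q(v)\ne 0$): since $r_v(u) - u \in \langle v\rangle$ for all $u\in V$, we have $\mathrm{Im}(r_v - I) = \langle v\rangle$ of dimension $1$. By the Cartan--Dieudonn\'e theorem (valid in characteristic $2$ for nondegenerate $Q$), every element of $O(V,Q)$ is a product of reflections, and the Dickson invariant is the unique homomorphism $D:O(V,Q)\to \mathbb{F}_2$ with $D(r_v)=1$ for every reflection. Consequently, it suffices to prove that $\delta$ is itself a group homomorphism: once this is done, $\delta$ and $D$ agree on the generators of $O(V,Q)$, hence coincide globally, and the lemma follows.

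To prove that $\delta$ is a homomorphism, I would start from the identity $gh - I = g(h-I) + (g-I)$, which gives $\mathrm{Im}(gh - I)\subseteq g(\mathrm{Im}(h-I)) + \mathrm{Im}(g-I)$ and the subadditivity $\mathrm{rk}(gh-I) \le \mathrm{rk}(g-I) + \mathrm{rk}(h-I)$. The cleanest route to the parity statement is through Wall's form: for each $h \in O(V,Q)$, on $W_h := \mathrm{Im}(h-I)$ define the bilinear form
\[
\beta_h\bigl((h-I)x,(h-I)y\bigr) = B(x,(h-I)y),
\]
which is well-defined because $\ker(h-I) = \mathrm{Im}(h-I)^\perp$ with respect to the associated bilinear form $B$. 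Wall's formula then relates $W_{gh}$ and its form to $(W_g,\beta_g)$ and $(W_h,\beta_h)$, and in particular yields
\[
\dim W_{gh} \equiv \dim W_g + \dim W_h \pmod{2},
\]
which is precisely the homomorphism property $\delta(gh) = \delta(g) + \delta(h)$.

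The main obstacle is establishing this parity statement. A more elementary alternative is induction on the reflection length of $h$ via Cartan--Dieudonn\'e, which reduces the claim to showing that for every reflection $r_v$ and every $h\in O(V,Q)$ the ranks $\mathrm{rk}(r_v h - I)$ and $\mathrm{rk}(h-I)$ differ by exactly $\pm 1$ (never $0$); the dichotomy depends on whether $v \in \mathrm{Im}(h-I)$ and requires careful use of the orthogonality $\ker(h-I) = \mathrm{Im}(h-I)^\perp$ to rule out the possibility of equal ranks. Once the homomorphism property is established by either route, $\delta = D$, and thus $h \in \Omega(V,Q) \iff \delta(h) = 0 \iff \mathrm{rk}(h-I)$ is even, completing the proof.
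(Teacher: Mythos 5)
The paper does not actually prove this lemma: it is quoted verbatim from Dye's 1977 paper (\cite[Theorem 3]{DYE}), so there is no internal argument to compare against. Your proposal is therefore an attempt to reprove Dye's theorem, and its overall skeleton --- show that $\delta(h)=\mathrm{rk}(h-I)\bmod 2$ takes the value $1$ on every reflection, show $\delta$ is a homomorphism, and conclude by generation by reflections --- is the standard route.

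As written, though, the crux is not established. The Wall-form route essentially begs the question: the congruence $\dim W_{gh}\equiv\dim W_g+\dim W_h \pmod 2$ \emph{is} the homomorphism property you are trying to prove, and it does not drop out of any simple formula relating $W_{gh}$ to $(W_g,\beta_g)$ and $(W_h,\beta_h)$ --- indeed $\dim W_{gh}$ is not even a function of $\dim W_g$, $\dim W_h$ and $\dim(W_g\cap W_h)$ (take $g=r_vr_w$ with $B(v,w)=1$, so $g$ has order $3$: then $g\cdot g$ and $g\cdot g^{-1}$ have Wall spaces of dimensions $2$ and $0$ with identical input data). The reflection-length induction is the right elementary route, but the reason you give for excluding equal ranks is insufficient in the decisive case. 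If $v\notin \mathrm{Im}(h-I)$, then $\mathrm{Fix}(r_vh)=\mathrm{Fix}(h)\cap v^{\perp}$ and the duality $\ker(h-I)=\mathrm{Im}(h-I)^{\perp}$ does give that the rank rises by exactly $1$. But if $v\in\mathrm{Im}(h-I)$, say $v=(h-I)x_0$, the duality only tells you $B(x_0,v)$ is independent of the choice of $x_0$; to produce the new fixed vector (namely $x_0$ itself, whence the rank drops by exactly $1$) you need $B(x_0,v)=Q(v)\neq 0$, and this comes from the characteristic-$2$ identity $Q((h-I)x)=B\bigl((h-I)x,\,x\bigr)$, valid because $h$ preserves $Q$ --- i.e.\ the diagonal of the Wall form is $Q$, a fact not implied by the $\ker/\mathrm{Im}$ orthogonality alone. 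Finally, the Cartan--Dieudonn\'e theorem in characteristic $2$ has the genuine exception $O_4^{+}(2)$, where the reflections do not generate; this is harmless in the paper's setting since $\dim V=2d\geq 6$, but it should be flagged if you invoke the theorem for an arbitrary nondegenerate $Q$. With these repairs (and the observation that all reflections are $O(V,Q)$-conjugate, so $\delta=D$ once $\delta$ is a homomorphism), your argument becomes a complete and self-contained proof of the cited result.
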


\begin{lemma}\label{lem:sprank}
With the notation of Construction~$\ref{con:gen}$ for $M=\Sp(2d,2)$,  $H^\Omega$ has rank $4.$

\end{lemma}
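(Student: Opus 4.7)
The plan is to compute the rank of $H^\Omega$ by counting $H_\alpha$-orbits on $\Omega$, exploiting the $H$-invariant partition $\Sigma$. Since $H^\Sigma = H/C \cong M^\Sigma$ is $2$-transitive on $\Sigma$ (and $C$ acts trivially on $\Sigma$), $H_\alpha$ has only two orbits on $\Sigma$: $\{\sigma\}$ and $\Sigma \setminus \{\sigma\}$. Hence the rank of $H^\Omega$ equals the number of $H_\alpha$-orbits on $\sigma$ plus the number of $H_{\alpha,\sigma'}$-orbits on $\sigma'$, for any fixed $\sigma' \in \Sigma \setminus \{\sigma\}$. Since $|\sigma| = r = 2$ and $H_\alpha \cap C = 1$, the stabiliser $H_\alpha$ cannot swap the two points of $\sigma$, so it fixes both, contributing $2$ orbits. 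It therefore remains to show that $H_{\alpha, \sigma'}$ also fixes both points of $\sigma'$.

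To that end, I would analyse the overgroup $H_{\sigma, \sigma'}$. Writing $\sigma = Q$ and $\sigma' = Q'$ and setting $K := M_\sigma \cap M_{\sigma'} = O^\varepsilon(V,Q) \cap O^\varepsilon(V,Q')$, I would first observe that $H_{\sigma,\sigma'} = C \times K$, and then study the natural homomorphism
\[
\pi \colon H_{\sigma,\sigma'} \longrightarrow \Sym(\sigma) \times \Sym(\sigma') \cong C_2 \times C_2.
\]
The generator of $C$ swaps both blocks nontrivially (since $C$ is regular on each), so it maps to $(1,1)$. For $k \in K$, the first coordinate of $\pi(k)$ is trivial precisely when $k \in R = \Omega^\varepsilon(V,Q)$, and the second coordinate is trivial precisely when $k$ lies in the analogous subgroup $\Omega^\varepsilon(V,Q')$ of $M_{\sigma'}$. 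By Lemma \ref{DicksonInv}, membership of $k$ in $\Omega^\varepsilon(V,Q)$ (and in $\Omega^\varepsilon(V,Q')$) is characterised by the parity of $\mathrm{rk}(k-I)$, a condition that depends only on $k$ as a linear map, not on the chosen quadratic form. So both coordinates of $\pi(k)$ agree, giving $\pi(k) \in \{(0,0),(1,1)\}$, and the image of $\pi$ is exactly the diagonal subgroup $C_2$.

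The diagonality of $\pi(H_{\sigma,\sigma'})$ is the heart of the argument. An element of $H_{\alpha,\sigma'}$ fixes $\alpha$, hence acts trivially on $\sigma$; by diagonality it must then act trivially on $\sigma'$ as well, so it fixes both points of $\sigma'$. This contributes the remaining $2$ orbits, yielding rank $2+2=4$. The key conceptual point, which is also what makes the argument go through uniformly in $d$ and $\varepsilon$, is the form-independence of the Dickson invariant criterion in Lemma \ref{DicksonInv}; it is precisely this feature that ties the actions of $H_{\sigma,\sigma'}$ on $\sigma$ and $\sigma'$ together and prevents the rank from dropping to $3$.
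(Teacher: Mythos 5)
Your proof is correct and takes essentially the same route as the paper: both arguments reduce the rank count to showing that $H_{\alpha,\sigma'}$ fixes the two points of $\sigma'$, and both do so via Lemma~\ref{DicksonInv}, exploiting that the parity of $\mathrm{rk}(k-I)$ is independent of the quadratic form, so that elements of $M_\sigma\cap M_{\sigma'}$ (and hence of $H_{\sigma,\sigma'}$) act in the same way on $\sigma$ and on $\sigma'$. Your ``diagonal image in $C_2\times C_2$'' formulation is just a repackaging of the paper's explicit coset computation $H_\alpha = R\cup Rtc$ with its two-case check.
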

\begin{proof}
 Let $\alpha =R \in \Omega$, and let $\sigma=Q$ be the block of $\Sigma$ containing $\alpha$, so (identifying $\Sigma$ with the corresponding partition in $\Omega$) $\sigma = \{R, Rt\}$ where $t \in (M_{\sigma} \setminus R).$ 
By Corollary~\ref{cor:Rtr},  $R$, and hence also $H_\alpha$, is transitive on $\Sigma\setminus\{\sigma\}$.
Thus either $H_\alpha$ is transitive on $\Omega\setminus\sigma$ and $H^\Omega$ has rank 3, or $H_\alpha$ has two equal length orbits in $\Omega\setminus\sigma$ and $H^\Omega$ has rank $4$.  We  show that the latter holds.

 Let $\sigma'\in\Sigma\setminus\{\sigma\}$, and write $\sigma'=Q^g$ for some $g\in M$.
The stabiliser of  $Q^g$ is $M_{\sigma'}=M_{\sigma}^g$. 
Le $\alpha':=\alpha g\in\sigma'$, so $M_{\alpha'}=R^g$.
We claim that $H_{\alpha, \sigma'}$ stabilises $\alpha'.$ Observe that
$$H_{\sigma} = C \times M_{\sigma} \text{ and } H_{\alpha} = R \cup Rtc$$
where $\langle c \rangle= C.$ Indeed, the first equality follows since $C$ stabilises $\sigma$; the second equality follows since $C$ acts regularly on $\sigma.$
Now
\begin{align*}
H_{\alpha, \sigma'} & =H_{\alpha} \cap H_{\sigma'}  = (R \cup Rtc) \cap (C \times M_{\sigma'}) \\
& = (R \cap (C \times M_{\sigma'})) \cup (Rtc \cap (C \times M_{\sigma'})) \\ 
& = (R \cap M_{\sigma'}) \cup (Rt \cap M_{\sigma'})c.
\end{align*}
Let $h \in R \cap M_{\sigma'}= M_{\alpha, \sigma'}$. By Lemma \ref{DicksonInv}, we obtain that $\mathrm{rk}(h-I)$ is even since $h \in R=\Omega(V,Q)$. However, $h\in M_{\sigma'}=O(V,Q^g)$ so using    Lemma \ref{DicksonInv}  for the form $Q^g$, we find that  $h \in \Omega(V,Q^g)=R^g=M_{\alpha'}$. So $R \cap M_{\sigma'}$ stabilises $\alpha'.$ 
Now consider $hc \in (Rt \cap M_{\sigma'})c$ with $h \in Rt \cap M_{\sigma'}.$ By Lemma \ref{DicksonInv}, $\mathrm{rk}(h-I)$ is odd since  $h\in Rt= O(V,Q) \setminus \Omega(V,Q).$ Hence, by Lemma \ref{DicksonInv}, $h \in O(V,Q^g) \setminus \Omega(V,Q^g)$, and therefore $h$
swaps the two points in $\sigma'$.
So $hc$ acts trivially on $\sigma'$ and stabilises $\alpha'.$ Thus $H_{\alpha, \sigma'}$ fixes $\alpha'$, and so $H_\alpha$ has two orbits on $\Omega\setminus\sigma$ and $H^\Omega$ has rank $4$.
\end{proof}

\subsection{Line $6$: plinth ${\rm Ree}(q), q>3$}

Let $M={\rm Ree}(q)$ with $q=3^{2a+1}>3.$ We summarise information from \cite{REE} and \cite[Chapter 13]{Car} that we use in our analysis. Let $X$ be the automorphism group of $M$, so $X= M \rtimes \langle \phi \rangle$ where $\phi$ is a field automorphism of $M$ of order $2a+1.$ Let $B= U \rtimes T$ be the standard  Borel subgroup of $M$ where $|U|=q^3$ and $T= \langle t \rangle$ is the standard maximal torus of order $q-1,$ so $t^{\phi}=t^3.$ Then there exists $n_0 \in M$ such that $B \cap B^{n_0} =T$ (in \cite{REE} $n_0$ is denoted by $\omega_0$). Moreover, $n_0^{\phi}=n_0$ and $\phi$ normalises $U,T$ and $B$.

Let $A= \langle t^2 \rangle$ be the subgroup of $T$ of index $2$   and let $R=(U \rtimes A).$ 
Denote $S=(U \rtimes A) \rtimes \langle \phi \rangle$ and consider the action of $X$ on  $\Omega = \{Sx \mid x \in X\}.$ It is straightforward to see that the action (by right multiplication) of $M$ on $\Omega$ is equivalent to the action of $M$ on $\{Rx \mid x \in M\}$, and that $M$ (and $X$) preserve the block system $\Sigma=\{ \{Sx,Stx\} \mid x \in X\}$ in $\Omega$. In particular, if $\sigma = \{S,St\} \in \Sigma$, then $M_{\sigma}=B,$ so $(X^{\Sigma}, (U\rtimes A))$ is a special pair by Theorem \ref{t:special}.  Let $C= \C_{\Sym(\Omega)}(M)$ so, by  Lemma \ref{lem3}, $|C|=2.$ Let $c \in \Sym(\Omega)$ be defined by the rule $c: Sx \mapsto Stx.$ It is easy to see that $c$ centralises $M$ and $|c|=2,$ so $C=\langle c \rangle.$ Let $\ff:=\langle C, X^{\Omega} \rangle \le \Sym(\Omega).$ Notice that $c$ centralises $\phi$. Indeed,  since $t^{\phi}=t^3$, $t^2\in S$  and $S\phi=S$,
$$(Sx)c \phi = (Stx) \phi = (S \phi) \phi^{-1}tx \phi = S t^{\phi} x^{\phi}= S t^{3} x^{\phi}= Stx^{\phi}=(Sx^{\phi})c=(Sx)\phi c. $$
Hence $\ff = C \times X$ and we obtain the following construction.

\begin{construction}\label{con:ree}
Let $M={\rm Ree}(q)$ with $q = 3^{2a+1} > 3$ acting on $\Sigma$ of degree $q^3+1$, $r=2$, and, for $\sigma\in\Sigma$, let $R$ be the unique index $2$ subgroup of $M_\sigma$ so that $(M^\Sigma,R)$ is a special pair.
Let $X= M \rtimes \langle \phi \rangle$ where $\phi$ is a field automorphism of $M$ of order $2a+1,$ let $S=R\rtimes \langle \phi \rangle$ and $\Omega = \{Sx \mid x \in X\}.$
Let  $\ff = \langle c\rangle \times X^\Omega$, where $c: Sx \mapsto Stx$ for $t$ a generator of the maximal torus of $M_\sigma$ (note $t\in M_\sigma\setminus R$). 
 Then $\ff^{\Omega} \in \PIT$ with plinth $M$ and  $C:=\C_{\ff}(M)=\langle c\rangle\cong C_2$. Also $\ff$ stabilises the partition  $\{ \{Sx,Stx\} \mid x \in X\}$ of $\Omega$, which we identify with $\Sigma$.
\end{construction}

 Note that Construction \ref{con:ree} is equivalent to Construction \ref{con:gen}. We state it in this form for convenience of the proof of the results below. First we verify assertions made in Construction~\ref{con:ree}, and show that all properly innately transitive groups arising from the construction  have rank $4$.

\begin{lemma}\label{lem:reerank}
With the notation of  Construction~$\ref{con:ree}$, assume that $G\leq \Sym(\Omega)$ is such that $C \times M \le G \le \ff$. Then the following hold.
\begin{enumerate}
    \item[$(a)$] $G^{\Omega} \in \PIT$ with plinth $M$ and $C=\C_{\Sym(\Omega)}(M)$;
    \item[$(b)$] $\widehat{\varphi}(G^{\Omega}) = (G^{\Sigma}, R^{\Sigma})$ is a special pair as in Line $6$ of Table \ref{t:allr}. In particular $\widehat{\varphi}(\overline{G}^\Omega)=(X^\Sigma, R)$;
    \item [$(c)$] $G^\Omega$ has rank $4$.
\end{enumerate} 
\end{lemma}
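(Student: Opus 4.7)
For (a), I would observe that the action of $M$ on $\Omega$ is equivalent to right multiplication on $M/R$, which is faithful since $M$ is simple and $R$ is core-free in $M$. Hence $M^{\Omega}\cong M$ is a nonabelian simple transitive minimal normal subgroup of $G$, and since $C\le G$ is a nontrivial intransitive normal subgroup, $G^{\Omega}\in\PIT$ with plinth $M$ and is not quasiprimitive. To upgrade to $C=\C_{\Sym(\Omega)}(M)$, I would apply Lemma~\ref{lem:uniquesigma}, whose hypotheses hold since $G^{\Sigma}$ is $2$-transitive (hence primitive) and $M^{\Omega}$ is not regular ($M_{\alpha}=R\ne 1$). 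For (b), I would verify the four conditions of Definition~\ref{def1} for $(G^{\Sigma},R^{\Sigma})$: $G^{\Sigma}\le X$ has nonabelian simple socle $M^{\Sigma}\cong{\rm Ree}(q)$ (simple since $q>3$), is $2$-transitive, and $R^{\Sigma}\lhd(M^{\Sigma})_{\sigma}\cong B$ has index $r=2$. Invariance of $R^{\Sigma}$ under $G^{\Sigma}_{\sigma}$ holds because $\phi$ normalises both $U$ (as part of the Borel) and $A=\langle t^{2}\rangle$ (since $(t^{2})^{\phi}=t^{6}\in A$); transitivity on the single nontrivial coset is automatic. This is the special pair of Line~$6$ of Table~\ref{t:allr}, and specialising to $G=\ff$ gives $\widehat{\varphi}(\ff^{\Omega})=(X^{\Sigma},R)$.

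\textbf{Part (c), setup.} I would apply Lemma~\ref{lem4}: since $q>3$ the excluded case $G^{\Sigma}={\rm Ree}(3)$ does not occur, and since $R\le G_{\alpha}$ is transitive on $\Sigma\setminus\{\sigma\}$ by Corollary~\ref{cor:Rtr}, rank~$3$ holds if and only if $G_{\alpha,\sigma'}$ is transitive on $\sigma'$ for one (equivalently, every) $\sigma'\in\Sigma\setminus\{\sigma\}$. I choose $\sigma'=\{Sn_{0},Stn_{0}\}$ and aim to show $G_{\alpha,\sigma'}$ fixes both its points. Noting that $c\in C\le G$, any $g\in G\le\ff$ can be written uniquely as $c^{i}x$ with $x\in X$, and $x=c^{-i}g\in G\cap X$; thus $G=C\times(G\cap X)$ with $G\cap X=M\langle\phi^{k}\rangle$ for some $k\mid 2a+1$, as these are the only subgroups of $X=M\rtimes\langle\phi\rangle$ containing $M$. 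Using $X_{\sigma,\sigma'}=T\langle\phi\rangle$ (by counting: $|X_{\sigma,\sigma'}|=|X|/((q^{3}+1)q^{3})=(q-1)(2a+1)$, and $T\langle\phi\rangle\le X_{\sigma,\sigma'}$) together with routine coset arithmetic exploiting that $\phi\in S$ and $T\cap S=A$, I would show
\[
G_{\alpha,\sigma'}\;=\;A\langle\phi^{k}\rangle\;\cup\;cAt\langle\phi^{k}\rangle.
\]

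\textbf{Part (c), verification.} The crucial facts from the $^{2}G_{2}(q)$ BN-pair structure (see \cite{REE} or \cite[Chapter~13]{Car}) are: (i) the Weyl group has order $2$ and acts on $T$ by inversion, so $n_{0}tn_{0}^{-1}=t^{-1}$, and consequently $n_{0}t=t^{-1}n_{0}$ and $tn_{0}t=n_{0}$; (ii) $n_{0}$ and $\phi$ commute (given); (iii) $\phi\in S$. Applied to the two cosets in the shape of $G_{\alpha,\sigma'}$: for $a\in A$, $Sn_{0}\cdot a=S(n_{0}an_{0}^{-1})n_{0}=Sa^{-1}n_{0}=Sn_{0}$ since $a^{-1}\in A\le R\le S$; for $\phi^{k}$, $Sn_{0}\cdot\phi^{k}=S\phi^{k}n_{0}=Sn_{0}$; and for $cat$ with $a\in A$, $Sn_{0}\cdot(cat)=Stn_{0}\cdot at=S(tn_{0}at)=S(ta^{-1}t^{-1}n_{0})=S(a^{-1}n_{0})=Sn_{0}$, using $T$ abelian. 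The commutation identities $\phi n_{0}=n_{0}\phi$ and $\phi t\phi^{-1}=t^{3}$ then propagate the fixed-point property to every element $a\phi^{jk}$ or $cat\phi^{jk}$ of $G_{\alpha,\sigma'}$. Hence every element of $G_{\alpha,\sigma'}$ fixes $Sn_{0}$, and therefore also fixes $Stn_{0}$. So $G_{\alpha}$ has exactly four orbits on $\Omega$, namely $\{\alpha\}$, $\sigma\setminus\{\alpha\}$, and two orbits of length $q^{3}$ partitioning $\Omega\setminus\sigma$, and $G^{\Omega}$ has rank~$4$. The main obstacle here is the case analysis establishing the shape of $G_{\alpha,\sigma'}$ and checking each type of element, but it reduces uniformly to the single Weyl-group identity $n_{0}tn_{0}^{-1}=t^{-1}$.
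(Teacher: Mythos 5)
Your proposal is correct and takes essentially the same route as the paper's proof: parts $(a)$ and $(b)$ are the same standard verifications (the paper gets $C=\C_{\Sym(\Omega)}(M)$ directly from \cite[Theorem 3.2(i)]{PS} rather than via Lemma~\ref{lem:uniquesigma}, but this is equivalent), and part $(c)$ likewise chooses $\sigma'=\sigma^{n_0}$, uses $B\cap B^{n_0}=T$ together with Corollary~\ref{cor:Rtr}, and shows the two-point stabiliser fixes $\alpha'=Sn_0$. The only cosmetic difference is that the paper computes $\ff_{\alpha,\sigma'}$ once and deduces the conclusion for every $G$ with $C\times M\le G\le\ff$, whereas you decompose $G=C\times M\langle\phi^k\rangle$ and check $G_{\alpha,\sigma'}$ element by element; the underlying coset calculation (and the torus-normalisation facts about $n_0$, $t$, $\phi$) is the same.
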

\begin{proof}
 $(a)$ Let $\alpha : = S \in \Omega,$ so $M_{\alpha}=R^{\Omega}$. We showed above that $c\in C$, and by its definition $c$ has order $2$, since $t^2\in S$. Also, by \cite[Theorem 3.2(i)]{PS}, $C\leq \C_{\Sym(\Omega)}(M) \cong N_M(R)/R = M_\sigma/R \cong C_2$, and so  $C=\C_{\Sym(\Omega)}(M)\cong C_2$. Thus $G$ normalises $M$ and $C$, and since $M$ is simple and transitive on $\Omega$ and $C$ is intransitive, 
it follows that $G^{\Omega} \in\PIT$ with plinth $M$, proving part $(a)$. 

$(b)$ By \cite[Theorem 3.2(ii)]{PS}, $\sigma=\alpha^C=\{S,St\}$ and (see also \eqref{e:sigmaaction}) $\sigma=\alpha^{N_M(R)}=\alpha^{M_\sigma}$, so we may identify $\Sigma$ with the set of $C$-orbits in $\Omega$. Thus $G^{\Sigma} \le X^{\Sigma}$ with equality if and only if $G=\ff.$ By the definition of $R$ and Theorem \ref{t:special}, $(G^{\Sigma},R)$ is a special pair as in Line 6 of Table \ref{t:allr}.

$(c)$  Recall that $\alpha=S \in \Omega$ and $\sigma:=\{S,St\} \in \Sigma$. Also recall from the first paragraph of this subsection that $n_0\in B=M$ satisfies $M\cap M^{n_0}=T$, and let $\alpha':=\alpha^{n_0}$ and $\sigma':=\sigma^{n_0}.$ 
By Corollary~\ref{cor:Rtr},  $R$, and hence also  $G_\alpha$, is transitive on $\Sigma\setminus\{\sigma\}$.
Thus either $G_\alpha$ is transitive on $\Omega\setminus \sigma$ and $G^\Omega$ has rank $3$, or $G_\alpha$ has two equal length orbits on $\Omega\setminus \sigma$ and $G^\Omega$ has rank $4$. We will show that the latter holds by showing that $\ff_\alpha$ has two equal length orbits on $\Omega\setminus \sigma$.
Now 
$$
\ff_{\sigma} = C \times X_{\sigma}=C \times (B \rtimes \langle \phi \rangle) \text{ and } \ff_{\alpha}= S \cup Stc,
$$
and so 
$$
\ff_{\sigma'}=(C \times X_{\sigma})^{n_0}=C \times X_{\sigma}^{n_0}=C \times (B^{n_0} \rtimes \langle \phi^{n_0} \rangle)=C \times (B^{n_0} \rtimes \langle \phi \rangle).
$$
Therefore, since $\ff_{\alpha}=S \cup Stc$, 
\begin{align*}
\ff_{\alpha, \sigma'} & = (S \cup Stc) \cap (C \times (B^{n_0} \rtimes \langle \phi \rangle))\\
                    & = \Bigl(S \cap (B^{n_0} \rtimes \langle \phi \rangle) \Bigr) \cup \Bigl(Stc \cap \bigl(C \times (B^{n_0} \rtimes \langle \phi \rangle)\bigr) \Bigr).
\end{align*}
First we note that  $S \cap (B^{n_0} \rtimes \langle \phi \rangle) = A \rtimes \langle \phi \rangle$ (since $B \cap B^{n_0} =T$) and hence stabilises $\alpha'.$ Secondly, 
$$
Stc \cap \bigl(C \times (B^{n_0} \rtimes \langle \phi \rangle)\bigr)= Stc \cap (B^{n_0} \rtimes \langle \phi \rangle)c= (A \rtimes \langle \phi \rangle)tc,
$$ 
and so this coset also stabilises $\alpha'.$ Thus  $\ff_{\alpha, \sigma'}$ stabilises $\alpha'$ and so is not transitive on $\sigma'$. Hence $\ff_\alpha$ has two equal length orbits in $\Omega\setminus\sigma$, and so does $G_\alpha$.  
Thus $G^\Omega$ has rank 4.
\end{proof}

Finally we show that all properly innately transitive groups corresponding to a special pair satisfying the conditions of Line 6 of Table~\ref{t:allr} arise from Construction~\ref{con:ree}.

\begin{lemma}\label{lem:sppirRee}
Suppose that $G_0^{\Omega_0} \le \Sym(\Omega_0)$ is such that $G_0^{\Omega_0} \in \PIT$ with plinth $M_0\cong {\rm Ree}(q)$ and $\widehat{\varphi}(G_0^{\Omega_0})$  a special pair satisfying the conditions of Line $6$ of Table $\ref{t:allr}$. Then 
 $G_0^{\Omega_0}$ is permutationally isomorphic to a subgroup $G^\Omega$ with $C \times M \le G \le \ff$ and $\ff, M, C, \Omega$ as in Construction~$\ref{con:ree}$. 
\end{lemma}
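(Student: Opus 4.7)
The plan is to apply Lemma~\ref{lem3}(b) with $(X^\Sigma,R^\Sigma):=(G_0^{\Sigma_0},R_0^{\Sigma_0})$ itself, thereby transporting $G_0^{\Omega_0}$ to a subgroup of $N_{\Sym(\Omega')}(M)$ acting on $\Omega'=\{Rm:m\in M\}$, and then to identify $\Omega'$ and this normaliser with $\Omega$ and $\ff$ from Construction~\ref{con:ree}. First, by hypothesis and Theorem~\ref{t:special}, the pair $(G_0^{\Sigma_0},R_0^{\Sigma_0})$ satisfies Line~$6$ of Table~\ref{t:allr}, so $M\cong {\rm Ree}(q)$ with $q=3^{2a+1}>3$, and $R$ (corresponding to $R_0^{\Sigma_0}$) is the unique index-$2$ subgroup of the Borel subgroup $M_\sigma$. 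Since $\widehat{\varphi}(G_0^{\Omega_0})\approx (G_0^{\Sigma_0},R_0^{\Sigma_0})$ tautologically, Lemma~\ref{lem3}(b) produces a permutational isomorphism from $G_0^{\Omega_0}$ to some $G^{\Omega'}$ with $C'\times M\le G\le N_{\Sym(\Omega')}(M)$, where $C'=\C_{\Sym(\Omega')}(M)$ has order~$2$ by Lemma~\ref{lem3}(a).

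Next I match $\Omega'$ with $\Omega$ from Construction~\ref{con:ree}, where $\Omega=\{Sx:x\in X\}$ with $X=M\rtimes\langle\phi\rangle$ and $S=R\rtimes\langle\phi\rangle$. Since $\phi$ normalises $S$ and $X=M\langle\phi\rangle$, we have $\{Sx:x\in X\}=\{Sm:m\in M\}$, and the map $Sm\mapsto (S\cap M)m=Rm$ is a well-defined $M$-equivariant bijection from $\Omega$ to $\Omega'$ (using $\langle\phi\rangle\cap M=1$ to see that $S\cap M=R$). Via this identification we take $\Omega'=\Omega$, which also identifies $N_{\Sym(\Omega')}(M)=N_{\Sym(\Omega)}(M)$ and $C'=C$.

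It then remains to show $N_{\Sym(\Omega)}(M)=\ff$ and to deduce the conclusion. The inclusion $\ff\le N_{\Sym(\Omega)}(M)$ is immediate from Construction~\ref{con:ree}. For the reverse inclusion, $C$ is a normal subgroup of $N_{\Sym(\Omega)}(M)$ of order~$2$, and $N_{\Sym(\Omega)}(M)/C$ embeds into $N_{\Sym(\Sigma)}(M^\Sigma)=\Aut({\rm Ree}(q))$ by \cite[Table~7.4]{Cam99}, since $M^\Sigma$ is $2$-transitive; as $|\ff/C|=|\Aut({\rm Ree}(q))|$, an order comparison forces equality. Hence $C\times M\le G\le\ff$, as required. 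The only delicate point is the matching of the $M$-sets in the second paragraph (verifying $S\cap M=R$ and equating the two coset spaces); the remainder reduces to a direct invocation of Lemma~\ref{lem3}(b) and routine arithmetic of group orders.
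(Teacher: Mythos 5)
Your proposal is correct and follows essentially the same route as the paper: invoke Lemma~\ref{lem3}(b) (with the special pair $(G_0^{\Sigma_0},R_0^{\Sigma_0})$ itself), identify the $M$-set $\{Rm\mid m\in M\}$ with $\Omega=\{Sx\mid x\in X\}$, and then pin down $N_{\Sym(\Omega)}(M)=\ff$ by comparing $|N_{\Sym(\Omega)}(M)/C|$ with $|N_{\Sym(\Sigma)}(M^\Sigma)|=|X|=|\ff/C|$ via \cite[Table 7.4]{Cam99}. The only difference is that you spell out the coset-space identification ($S\cap M=R$, $\{Sx\mid x\in X\}=\{Sm\mid m\in M\}$), which the paper dismisses as straightforward before Construction~\ref{con:ree}.
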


\begin{proof}
By Lemma \ref{lem3}, and identifying $M$-actions on $\{Rx \mid x \in M\}$ and  $\Omega=\{ Sx \mid x \in X\}$ we may assume that $M_0=M$, $\Omega_0=\Omega$, and that $(C \times M) \le G \le N_{\Sym(\Omega)}(M)$, where $G:=G_0^{\Omega}.$ 
By Theorem  \ref{t:special}, we can take $\alpha \in \Omega$ such that   $M_{\alpha}=R=(U \rtimes A)$, and, since $M\lhd \ff$, we have $\ff \le N_{\Sym(\Omega)}(M).$ Also 
$$|N_{\Sym(\Omega)}(M)/C| = |N_{\Sym(\Omega)}(M)^{\Sigma}|=|X| = |\ff/C|$$
where the second equality holds
by \cite[Table 7.4]{Cam99} since $M^{\Sigma}$ is $2$-transitive.
 Therefore  $N_{\Sym(\Omega)}(M)=\ff$ and the result follows.
\end{proof}


\section{Proof of main results} \label{sect:rank3}

Now we are ready to proof Theorems \ref{t:PITSP} and \ref{t:PIT3}.  We consult various pages of the Atlas, for example, the Atlas \cite[pp. 4, 18, 85, 88, 100, 123, 134]{Atlas}.  

\subsection*{Proofs of Theorems \ref{t:PITSP} and \ref{t:PIT3}}
Our two goals are to find all preimages in $\PIT$ of the special pairs given by Theorem~\ref{t:special}, and all groups in $\PIT_3$, that is to say, we will achieve {\bf Objectives 2 and 3}. Since  by Lemma \ref{lem2} each group in $\PIT_3$ is a preimage of a special pair, our strategy is to go through Table \ref{t:allr} line by line, finding the preimages in $\PIT$, up to permutational isomorphism, and obtaining their ranks (or at least deciding if they have rank $3$ or not). In the following the Line numbers refer to Table~\ref{t:allr}. Recall that, by Theorem~\ref{t:special}, in each Line the subgroup $R$ is the unique normal subgroup of $M_\sigma$ of index (the appropriate prime) $r$.

\medskip

{\bf Line 1.} First we construct a preimage in $\PIT$ of the special pair for Line 1. Let $M=A_5$ acting naturally  on a set $\Sigma$ of size $5$ and,  for  $\sigma\in\Sigma$, let $R=V_4$ be the unique index $3$ normal subgroup of $M_\sigma=A_4$, and let $\Omega=\{Rx \mid x\in M\}$ with $M$ acting by right multiplication. Let $\alpha=R\in\Omega$ so that $M_\alpha=R$. By \cite[Theorem 3.2]{PS}, $C:=\C_{\Sym(\Omega)}(M)\cong N_M(M_\alpha)/M_\alpha=N_M(R)/R\cong C_3$. Let $X:=N_{\Sym(\Omega)}(M)$. Then it can be checked (by hand or computationally) that $X=(C\times M).2\cong \GammaL(2,4)$ has rank $3$ on $\Omega$, and that $X^\Sigma=X/C\cong S_5$ and $R^\Sigma\cong R$. Also it is straightforward to check that $(X^\Sigma, R^\Sigma)$ is a special pair (using Definition~\ref{def1}) satisfying the conditions of Line 1, and that $X^\Omega\in\PIT$ with $\widehat{\varphi}(X^\Omega)=(X^\Sigma, R^\Sigma)$. By Lemma~\ref{lem3}$(b)$, it follows that all preimages in $\PIT$ of $(X^\Sigma, R^\Sigma)$ are permutationally isomorphic to $X^\Omega$. Therefore part $(a)$ holds in Theorem \ref{t:PITSP} and the fourth line holds in Table \ref{t:allrk3}.

\medskip

{\bf Line 7.} Let  $M=\PSL(2,8)={\rm Ree}'(3)$ acting on $\Sigma$ such that $N_{\Sym(\Sigma)}(M)={\rm Ree(3)}$ is $2$-transitive. Then  $M_\sigma=C_9\rtimes C_2$. Let $R=C_9$, and 
consider the action of $M$ on $\Omega=\{Rx \mid x\in M\}$. 
If $G \le \Sym (\Omega)$ with a special pair as in Line $7$ of Table \ref{t:allr}, then by Lemma $\ref{lem3}(b)$ we may assume that 
$$
C \times M \le G \le N_{\Sym(\Omega)}(M)
$$ where $C=\C_{\Sym(\Omega)}(M)$ is as in Construction~\ref{con:gen}. Moreover, by Table \ref{t:allr}, $G^{\Sigma}= {\rm Ree(3)}$.  It is easy to verify computationally  that $N_{\Sym(\Omega)}(M)/C \cong {\rm Ree(3)},$ so $G=N_{\Sym(\Omega)}(M)$, and that $G^\Omega$ has rank 4. Therefore part $(a)$ holds in Theorem \ref{t:PITSP} and there is no contribution to Table \ref{t:allrk3}.

\medskip

{\bf Lines 3, 8, 9, 10.}
Here $M=\PSL(3,2)$, $\rm{M}_{11}$, $\rm{HS} $ or $\rm{Co}_3$, respectively. 
Also,  $M_\sigma= N.2$ where $N=A_4, A_6, \PSU(3,5)$ or  $\rm{McL}$, respectively.
Moreover $R=N$, $C=M_\sigma/R$,  $r=2$, and  $N_{\Sym(\Sigma)}(M)=M$ in each case. Thus, by Lemma \ref{lem3}(b) and Corollary \ref{lem:uniq}, up to permutational isomorphism, $G=C\times M$ and $\Omega=\{Rx \mid x\in M\}$. 
It is verified computationally that this action is rank 3 when $M \in \{\PSL(3,2), \rm{M}_{11}\}$ and rank 4 in the other two cases.
Therefore part $(b)$ holds in Theorem \ref{t:PITSP} and the  lines 5 and 6 hold in Table \ref{t:allrk3}.

\medskip

{\bf Line 5.} Here $M=\Sp(2d,2)$. Also $N_{\Sym(\Sigma)}(M)=M$, and so, by Lemma \ref{lem3}(b) and Corollary \ref{lem:uniq}, up to permutational isomorphism, $G=C\times M$ and $\Omega=\{Rx \mid x\in M\}$ (see also Lemma \ref{lem:spaction}). By Lemma \ref{lem:sprank}, this group has rank $4$.
Therefore part $(b)$ holds in Theorem \ref{t:PITSP} and 
there is no contribution to Table \ref{t:allrk3}.
\medskip

{\bf Lines 2, 4,  6.} For these Lines, such a group $G^\Omega\in\PIT$ exists by Lemmas~\ref{lem:sppirSL},~\ref{lem:sppirSU}, or~\ref{lem:reerank}, respectively. Each group $G^\Omega$ is as in part $(c)$  of Theorem \ref{t:PITSP}, by Lemmas~\ref{lem:pslaction},~\ref{lem:psUaction}, or~\ref{lem:sppirRee}, respectively.
By Lemma~\ref{lem:reerank}, each group $G$ for Line 6 of Table~\ref{t:allr} has rank $4$, and so the entry in Table~\ref{t:obj2} is correct and this case provides  no contribution to Table \ref{t:allrk3}.   

For Line 2, 
 $G^\Omega\in\PIT_3$ if and only  condition (b)(i) or (b)(ii) of Lemma~\ref{lem:sppirSL} holds. In the former case, line 1 of Table~\ref{t:allrk3} holds (note one of the conditions of Table \ref{t:allr} was omitted in Table~\ref{t:allrk3} since it is redundant as shown in Remark \ref{re:Table1}(ii))).
 In the latter case line $2$ of Table~\ref{t:allrk3} holds. In particular the entry in Table~\ref{t:obj2} is correct.
 
 For Line 4, 
 $G^\Omega\in\PIT_3$
  if and only if the conditions of  Line 4 of Table~\ref{t:allr} hold, by Lemma~\ref{lem:sppirSU}.
  Thus line $3$ of Table~\ref{t:allrk3} holds (again we omitted a redundant condition), and the entry in Table~\ref{t:obj2} is correct. This concludes the proof of Theorems \ref{t:PITSP} and \ref{t:PIT3}.

\medskip

We finish with a remark justifying the fact that lines 1, 2 and 3  of Table $\ref{t:allrk3}$ each yield infinitely many examples of rank $3$ groups in Theorem~\ref{t:PIT3}.

\begin{remark}\label{rem:infty}
{\rm 
In lines $1$ and $2$  of Table $\ref{t:allrk3}$, we may take $r=2$ and $q$ to be a prime such that $q\equiv 1\pmod 4$, of which there are infinitely many by Dirichlet's Theorem, (so $a=1$, and also $j=1$ in line 1). Then we may take $G$  to be the group $\ff$ in Construction~\ref{con:psl}, where $d$ is either $2$ or odd. We note that $\ff^\Sigma= \PGL(d,q)$ in both lines, and when $d$ is odd, $\ff= C_2\times \PGL(d,q)$ by Remark~\ref{rem:psl}.

In line $3$  of Table $\ref{t:allrk3}$, we may take $r=3$ and $q=q_0^2$ with the prime $q_0\equiv 2\pmod 3$, of which again there are infinitely many by Dirichlet's Theorem. Then we may take $G$ to be the group $\ff$ in Construction~\ref{con:psu}. We note that $\ff^\Sigma=\PGaU(d,q)$ and $|G^\Sigma/(G^\Sigma\cap\PGU(3,q))|=2a/j$, with $a=2, j=1$.
}
\end{remark}
\section{Appendix}\label{sect:appendix}


\bigskip

An incomplete list of innately transitive permutation groups of degree less than $60$ was included in John Bamberg's thesis \cite[Table 10.3]{B} in 2003. It focused on degrees less than $60$ for which imprimitive examples of innately transitive permutation groups exist. We found this list very helpful for studying small examples, as we developed our approach to the classification presented in this paper. In the course of our work, we discovered a small number of misprints and a missing case in Bamberg's table, and so we decided to produce a new list, generated and checked using Magma~\cite{magma}. As we rely on the data base of transitive permutation groups in \cite{magma}, our list covers degrees only up to $48$, and because there are more extensive lists of primitive and quasiprimitive groups \cite{B22, MAGMA1}, we include only properly innately transitive groups. In common with Bamberg's list, we provide in Table~\ref{t:allPIT}, for each $G^\Omega\in\PIT$ with $|\Omega|\leq 48$,
\begin{center}
    the degree $|\Omega|$, the order $|G|$, the $\mathrm{Rank}(G)$, the plinth $M\cong M^\Sigma$,\\
    and the Identification Number `Trans ID' for the group in Magma's database, 
\end{center}
where $\Sigma$ is the set of orbits in $\Omega$ of $\C_G(M)$, as discussed in Section~\ref{sect:intro}. At the time when John Bamberg's list was compiled the Identification Number was only available up to degree $35$. We refer to the transitive permutation group of degree $n$ with Trans ID $m$ as TR($n,m$).   To relate to the work of this paper we also give in Table~\ref{t:allPIT}
\begin{center}
    the size $r=|\C_G(M)|$, some information about the  quasiprimitive group $G^\Sigma$,\\ 
    and we indicate which lines correspond to special pairs.
\end{center}
We have not attempted to describe the structure of the group $G$ in detail as this can be explored computationally using Magma~\cite{magma} and the Identification number Trans ID.
However we note that, as discussed in Section~\ref{sect:intro}, $G^\Omega$ contains $M\times \C_G(M)$, and also $G^\Sigma = G/\C_G(M)$ and $M^\Sigma = M\C_G(M)/\C_G(M)\cong M$. Thus for those  lines where $M^\Sigma=G^\Sigma$,  the group $G^\Omega\cong M\times \C_G(M)$. 
There are a few lines which might cause the reader some confusion and we make a few additional comments about the third group of degree $40$ and the first four groups of degree $42$:
\begin{enumerate}
    \item The third line for degree $40$ is for the transitive group TG(40, 1191), which  contains the group TG(40, 587) as an index $2$ subgroup. For this group $G^\Omega$ we have $G^\Sigma\cong M_{10}$ and $(G^\Sigma)_\sigma\cong C_3^2:C_4$, and $(G^\Sigma)_\sigma$ has index $2$ in a maximal subgroup $L=C_3^2:Q_8$ of $M_{10}$ (the stabiliser in $M_{10}$ of a point of the projective line $\PG(1,9)$). Now $L$ has precisely three subgroups of index $2$. One of these subgroups is $L\cap M^\Sigma$, and $L\cap M^\Sigma$ is \emph{not} equal to $(G^\Sigma)_\sigma$ (since in the corresponding coset action for $M_{10}$, its socle $\PSL(2,9)\cong A_6$ has two orbits of length $10$). The actions for $M_{10}$ on the cosets of the other two index $2$ subgroups of $L$ are permutationally isomorphic (and are interchanged by an outer automorphism of $M_{10}$). Thus either of these coset actions may be taken for $G^\Sigma$. We note in addition that this also explains why we do not have an additional line in degree $40$ corresponding to a quasiprimitive  action $G^\Sigma$ of degree $20$ of $\Aut(A_6)$. 
    
    \item The first two lines for degree $42$ are for the transitive groups TG(42, 79) and TG(42, 80). In both lines the group $M\cong M^\Sigma=G^\Sigma=\PSL(3,2)$ and $r=2$, and hence $G^\Omega= M\times C$ (the same abstract group) with $C=C_2$. The group action $M^\Sigma$ is quasiprimitive but imprimitive on the set of $21$ flags (incident point-line pairs) of the Fano plane $\PG(2,2)$, and hence the stabiliser $M_\sigma \cong D_8$ (for $\sigma\in\Sigma$). A point stabiliser $M_\alpha$ (for $\alpha\in\sigma$) has index $2$ in $M_\sigma$ and is $C_4$ or $C_2^2$ for the groups TG(42,79) or TG(42,80), respectively.
    
    \item The third line for degree $42$ is for the transitive group TG(42,131), which  contains the group TG(42,79) as an index $2$ subgroup. The actions of the plinth $M^\Omega$ are the same for these two groups, namely the right multiplication action of $\PSL(3,2)$ on the set of right cosets of a subgroup $C_4$.  For the group $G= {\rm TG}(42,103)$,  $G^\Sigma$ consists of all correlations (that is, all collineations and dualities) of $\PG(2,2)$, acting primitively on the set of 21 flags.
    
    \item The fourth line of degree $42$ is for the transitive group TG(42,103), and here $M^\Sigma=\PSL(3,2)$ of degree $|\Sigma|=42/r=14$.  Hence $M_\sigma$ (for $\sigma\in\Sigma$) has index $14$ in $M$. There are two conjugacy classes of such subgroups, all isomorphic to $A_4$, and the $M$-coset actions for these two classes are permutationally isomorphic. Since $M^\Sigma=G^\Sigma$ in this line, the group $G^\Omega\cong C_3\times M$.
\end{enumerate}

The small discrepancies between the properly innately transitive entries in  John Bamberg's list and those in Table~\ref{t:allPIT} for degrees up to $48$, are very minor, and are as follows.
\begin{enumerate} 
\item The last line of degree 20 groups in Table~\ref{t:allPIT}, namely TG(20,265), was missing in \cite[Table 10.3]{B}.
\item There was a typo in the Identification number for the seventh group of degree $28$ in \cite[Table 10.3]{B}, which was listed as transitive group TG(28, 282) and should have been TG(28, 281), which is the second group of degree $28$ in Table~\ref{t:allPIT}.
 \end{enumerate}

Finally we wish to acknowledge the work of others in enumerating the transitive permutation groups of small degrees which our computations for Table~\ref{t:allPIT} rely on. The transitive groups with degrees up to $15$ were determined by Gordon Royle \cite{R87}, Greg Butler \cite{B93} and Conway, Hulpke \& McKay \cite{CHM98}, while the transitive groups with degrees in the range $16$ to $30$ were enumerated by Alexander Hulpke \cite{Hul05}. John Cannon and Derek Holt \cite{CH08}  determined the transitive groups of degree $32$; Derek Holt and Gordon Royle  \cite{HR20} determined those with degrees in the range $33$ to $47$, and Holt, Royle \& Tracey \cite{HRT22}  determined the transitive groups of degree $48$.


\begin{longtable}{|c|c|c|c|c| p{1.3cm} | p{3.5cm}|p{3.5cm}|}
\hline
 $|\Omega|$&Trans ID&$|G|$&$r$&Rank$(G)$&special pair &$M^\Sigma$ &$G^\Sigma$\\ 
\hline
  $12$&$ 76$&$ 120$&$ 2$&$ 4$&yes&$ \PSL(2,5)$&$ \PSL(2,5)$ \\
  $12$&$ 124$&$ 240$&$ 2$&$ 3$&yes&$ \PSL(2,5)$&$ \PGL(2,5)$ \\
  \hline
  $14$&$ 17$&$ 336$&$ 2$&$ 3$&yes&$ \PSL(3,2)$&$ \PSL(3,2)  $ \\
  \hline
  $15$&$ 15$&$ 180$&$ 3$&$ 4$&no&$A_5$&$A_5  $ \\
  $15$&$ 21$&$ 360$&$ 3$&$ 3$&yes&$ A_5$&$ S_5  $ \\
  \hline
 $ 20$&$ 36$&$ 120$&$ 2$&$ 6$&no&$ A_5$ on pairs&$ A_5$ on pairs \\
  $20$&$ 65$&$ 240$&$ 2$&$ 5$&no&$ A_5$ on pairs&$ S_5$ on pairs \\
  $20$&$ 152$&$ 720$&$ 2$&$ 4$&yes&$  \PSL(2,9)$&$ \PSL(2,9)  $ \\
  $20$&$ 197$&$ 1440$&$ 2$&$ 3$&yes&$ \PSL(2,9)$&$ M_{10}  $ \\
  $20$&$ 198$&$ 1440$&$ 2$&$ 4$&yes&$ \PSL(2,9)$&$\PSigmaL(2,9)$ \\
  $20$&$ 200$&$ 1440$&$ 2$&$ 3$&yes&$ \PSL(2,9)$&$ \PGL(2,9)  $ \\
  $20$&$ 265$&$ 2880$&$ 2$&$ 3$&yes&$ \PSL(2,9)$&$\PGammaL(2,9)  $ \\
  \hline
  $22$&$ 26$&$ 15840$&$ 2$&$ 3$&yes&$ M_{11}$&$ M_{11}  $ \\
  \hline
  $24$&$ 1355$&$ 504$&$ 3$&$ 4$&no& $  \PSL(2,7)$&$ \PSL(2,7)  $ \\
  $24$&$ 2668$&$ 1008$&$ 3$&$ 4$&no&$ \PSL(2,7)$&$ \PGL(2,7) $ \\
  \hline
  $26$&$ 47$&$ 11232$&$ 2$&$ 3$&yes&$\PSL(3,3)$&$ \PSL(3,3)  $ \\
  \hline
  $28$&$ 199$&$ 2184$&$ 2$&$ 4$&yes&$ \PSL(2,13)$&$ \PSL(2,13)  $ \\
  $28$&$ 281$&$ 4368$&$ 2$&$ 3$&yes&$ \PSL(2,13)$&$ \PGL(2,13)  $ \\
  \hline
  $30$&$ 29$&$ 120$&$ 2$&$ 10$&no&$ A_5$ impr action on $2|2|1$ partitions&$ A_5$ impr action on $2|2|1$ partitions\\
  $30$&$ 58$&$ 240$&$ 2$&$ 7$&no&$ A_5$ impr action on $2|2|1$ partitions&$ S_5$ impr action on $2|2|1$ partitions\\
  $30$&$ 179$&$ 720$&$ 2$&$ 5$&no&$ A_6$ on pairs&$ A_6$ on pairs \\
  $30$&$ 261$&$ 1440$&$ 2$&$ 5$&no&$ A_6$ on pairs&$ S_6$ on pairs\\
  \hline
  $36$&$ 5559$&$ 4896$&$ 2$&$ 4$&yes&$\PSL(2,17)$&$ \PSL(2,17)  $ \\
  $36$&$ 8345$&$ 9792$&$ 2$&$ 3$&yes&$\PSL(2,17)$&$ \PGL(2,17)  $ \\
  \hline
  $40$&$ 587$&$ 720$&$ 2$&$ 8$&no&$ A_6$ on $3$-sets&$ A_6$ on $3$-sets \\
  $40$&$ 1189$&$ 1440$&$ 2$&$ 6$&no&$ A_6$ on $3$-sets&$ S_6$ on $3$-sets\\
  $40$&$ 1191$&$ 1440$&$ 2$&$ 5$&no&$ A_6$ on $3$-sets&$M_{10}$ on $(C_3^2\,:\,C_4)$-cosets \\
  $40$&$ 1197$&$ 1440$&$ 4$&$ 6$&no&$\PSL(2,9)$&$\PSL(2,9) $ \\
  $40$&$ 2312$&$ 2880$&$ 4$&$ 4$&no&$ \PSL(2,9)$&$ M_{10} $ \\
 $ 40$&$ 2314$&$ 2880$&$ 4$&$ 5$&no&$\PSL(2,9)$&$ \PSigmaL(2,9)  $ \\
 $ 40$&$ 2323$&$ 2880$&$ 4$&$ 5$&no&$ \PSL(2,9)$&$ \PGL(2,9)  $ \\
  $40$&$ 5156$&$ 5760$&$ 4$&$ 4$&no&$ \PSL(2,9)$&$ \PGammaL(2,9)  $ \\
  \hline
 $ 42$&$ 79$&$ 336$&$ 2$&$ 10$&no&$ \PSL(3,2)$ on flags&$ \PSL(3,2)$ on flags \\
 $ 42$&$ 80$&$ 336$&$ 2$&$ 9$&no&$ \PSL(3,2)$ on flags&$ \PSL(3,2)$ on flags\\
 $42$&$ 131$&$ 672$&$ 2$&$ 7$&no&$ \PSL(3,2)$ on flags&correlations  of $\PG(2,2)$ on flags\\
 $ 42$&$ 103$&$ 504$&$ 3$&$ 7$&no&$ \PSL(3,2)$ on $A_4$-cosets&$ \PSL(3,2)$ on $A_4$-cosets\\
  
  $42$&$ 175$&$ 1008$&$ 6$&$ 5$&no&$ \PSL(3,2) $&$ \PSL(3,2)   $ \\
  $42$&$ 335$&$ 3276$&$ 3$&$ 4$&no&$  \PSL(2,13) $&$ \PSL(2,13)  $ \\
  $42$&$ 484$&$ 6552$&$ 3$&$ 4$&no&$ \PSL(2,13)$&$ \PGL(2,13)  $ \\
  $42$&$ 410$&$ 5040$&$ 2$&$ 5$&no&$ A_7$ on pairs&$ A_7$ on pairs \\  
  $42$&$ 550$&$ 10080$&$ 2$&$ 5$&no&$ A_7$ on pairs&$ S_7$ on pairs \\
\hline
\caption{Properly innately transitive groups $G$ of degree up to $48$}\label{t:allPIT}
\end{longtable}

\bibliographystyle{abbrv} 

\end{document}